\theoremstyle{definition}
\def\fnum{equation}
\newtheorem{Thm}[\fnum]{Theorem}
\newtheorem{Cor}[\fnum]{Corollary}
\newtheorem{Lem}[\fnum]{Lemma}
\newtheorem{Con}[\fnum]{Conjecture}
\newtheorem{Def}[\fnum]{Definition}
\newtheorem{Rem}[\fnum]{Remark}
\newtheorem{Pro}[\fnum]{Proposition}
\newtheorem{Obs}{Observation}
\numberwithin{equation}{section}
\newcommand{\Vol}{{\text{Vol}}}
\newcommand{\nn}{{\bf{n}}}
\newcommand{\Hess}{{\text {Hess}}}
\def\ZZ{{\bold Z}}
\def\RR{{\bold R}}
\def\SS{{\bold S}}
\newcommand{\e}{{\text {e}}}
\newcommand{\cO}{{\mathcal{O}}}
\newcommand{\cc}{\mathcal C}
\newcommand{\cs}{\mathcal S}
\DeclareMathOperator{\wh}{Wh}
\DeclareMathOperator{\Top}{Top}
\DeclareMathOperator{\pl}{PL}
\newcommand{\eqr}[1]{(\ref{#1})}
\title[Mean curvature flow as a tool to study topology of $4$-manifolds]{Mean curvature flow as a tool to study topology of $4$-manifolds}
\author{Tobias Holck Colding}%
\address{MIT, Dept. of Math.\\
77 Massachusetts Avenue, Cambridge, MA 02139-4307.}
\author{William P. Minicozzi II}%
\address{Johns Hopkins University\\
Dept. of Math.\\
3400 N. Charles St.\\
Baltimore, MD 21218.}
\author{Erik Kj\ae r Pedersen}%
\address{University of Copenhagen\\
Dept. of Math.\\
Universitetsparken 5\\
2100 Copenhagen\\
Denmark.}
\thanks{The first two authors
were partially supported by NSF Grants DMS  11040934, DMS
0906233,  and NSF FRG grants DMS 
 0854774 and DMS 0853501}
\email{colding@math.mit.edu, minicozz@math.jhu.edu, and erik@math.ku.dk}
\begin{document}

\maketitle

\begin{abstract}
In this paper we will discuss how one may be able to use mean curvature flow to tackle some of the central problems in topology in $4$-dimensions.   

We will be concerned with smooth closed $4$-manifolds that can be smoothly embedded as a hypersurface in $\RR^5$.   
We begin with explaining why all closed smooth homotopy spheres can be smoothly embedded.  
After that we discuss what happens to such a hypersurface under the mean curvature flow.  If the hypersurface is in general or generic position before the flow starts, then we explain what singularities can occur under the flow and also why it can be assumed to be in generic position.  

The mean curvature flow is the negative gradient flow of volume, so any hypersurface flows through hypersurfaces in the direction of steepest descent for volume and eventually becomes extinct in finite time.  Before it becomes extinct, topological changes can occur as it goes through singularities.  Thus, in some sense, the topology is encoded in the singularities.   
\end{abstract}

\section{Introduction}

The aim of this paper is two-fold.   The bulk of it is spend on discussing mean curvature flow of hypersurfaces in Euclidean space and general manifolds and, in particular, surveying a number of very recent results about mean curvature starting at a generic initial hypersurface.  The second aim is to explain and discuss possible applications of these results to the topology of four manifolds.  In particular, the first few sections are spend to popularize a result, well known to older surgeons.  Namely, that any 
 closed smooth $4$-dimensional manifold homotopy equivalent to $\SS^4$ can be smoothly embedded as a hypersurface. We do this phrased in  modern language, but it is of course only a reformulation of a result due to Kervaire and Milnor \cite{KM}
in $\RR^5$.

With this theorem in hand, we spend the rest of the paper on mean curvature flow and, in the process, explain what kind of singularities can occur as a hypersurface evolves by the flow before it becomes extinct in finite time.

\vskip2mm
Surgery theory originated in the seminal paper of M. Kervaire and J. Milnor where they classified smooth manifolds homotopy equivalent to a sphere, and developed the basic surgery techniques in the simply connected case. These methods were taken up by W. Browder and S. Novikov. Browder's point of view to study the question of existence of a manifold homotopy equivalent to a given space whereas Novikov's approach was to investigate whether a given homotopy equivalence is homotopic to a diffeomorphism. D. Sullivan realized that existence and uniqueness are just two sides of the same question and formulated the theory in terms of a surgery exact sequence. Finally, the theory was vastly generalized to also treat non-simply connected manifolds by C.T.C. Wall.
The theory only works in dimensions bigger than four, but there are nevertheless a few things that work in all dimensions. In the first few sections we give a description of the surgery exact sequence and discuss low-dimensional phenomena. This leads to a description of the Kirby--Siebenmann obstruction to triangulate topological manifolds, it gives a fairly simply proof of topological invariance of Pontrjagin classes and finally discusses to what extent surgery works in dimension $4$.

We begin the second half of the paper with describing some of the fundamental classical results about mean curvature flow.  We explain what happens when closed curves evolve under the flow, that is the results of M. Gage, M. Grayson, and R. Hamilton.  We explain G. Huisken's results about evolution of convex hypersurfaces and why they become extinct in round points.  After that, we discuss the example of a dumbbell where the neck pinches off before the bells become extinct.  We turn next to Huisken's fundamental monotonicity formula and why it implies that the singularities are modeled by self-similar shrinkers that evolve under the mean curvature flow by rescaling.  We mention Angenent's, 
Kapouleas-Kleene-M\o ller's, and X. H. Nguyen's examples of an exotic self-similar shrinkers and the computer evidence of D. Chopp and T. Ilmanen for a whole host of other exotic examples.  After all of these fundamental and foundational results, we discuss some very recent results that are at the heart of this article.  Namely, that the only generic singularities, that is, the only singularities that cannot be perturbed away, are the simplest ones: The shrinking spheres and shrinking cylinders. We also discuss why it should be the case that if the initial hypersurface is in generic position, then under the flow one only sees generic singularities.   Finally, we mention briefly some very recent result about minimal entropy self-shrinkers and work in progress about a canonical neighborhood theorem describing the mean curvature flow in a space-time neighborhood of a generic singularity.

We are grateful to Dave Gabai for his interest and numerous stimulating discussions.

\section{Surgery in low dimensions}
The well known surgery exact sequence of Browder-Novikov-Sullivan-Wall
\cite{w2} breaks down in dimensions below 5. In this section we discuss what remains
in low dimensions, and some of the implications this has in higher
dimensions.

Surgery deals with existence and uniqueness of manifold structures on a
given Poincar\'e Duality space. We shall not discuss existence, since this
is precisely what breaks down in low dimensions, and if a Poincar\'e Duality space is homotopy equivalent to a
manifold, we might as well replace the Poincar\'e Duality space by a
manifold.

The first observation we make is that the terms in the surgery exact
sequence are defined in all dimensions.

\begin{Def} Let $M$ be a compact manifold without boundary. An
element in the structure set is a homotopy equivalence
\[
M_1\simeq M
\]
of manifolds. Two such elements are equivalent if there is a homotopy
commutative diagram of manifolds
\[
\xymatrix{
M_1\ar@{_{(}->}[d]\ar[dr]\\
W\ar[r]&M\\
M_2\ar@{^{(}->}[u]\ar[ur]
}
\]

\noindent where the inclusions of $M_i\subset W$ are homotopy equivalences,
and $\partial W$ is the disjoint union of $M_1$ and $M_2$. Such a $W$ is
called an $h$-cobordism between $M_1$ and $M_2$.
\end{Def}

 The structure
set is usually denoted $\cs_h(M)$, and it comes in smooth, $\pl$, and
topological versions. It can also be varied by requiring the homotopy
equivalences to be simple, in this note however, we shall stick to just
homotopy equivalences.

There is a relative version of the structure set where we allow $M$ to have
a boundary and require $h:M_1\to M$ to be a homeomorphism
($\pl$-homeomorphism, diffeomorphism) on the boundary. This gives rise to the
higher structure sets $\cs(M\times D^n \text{ rel. } \partial)$. In this note
we shall not be considering further relativisation (e.~g. allowing homotopy
equivalence on a part of the boundary).

The next term in the surgery exact sequence is the normal invariant. There 
are many mistakes in the literature concerning basepoints, so to be precise
we shall consider the normal invariant to be the set of homotopy classes of based maps from
$M_+$ ($M$ with a disjoint basepoint) to $G/O$ in the smooth case, $G/\pl$
in the $\pl$ case, and $G/\Top$ in the topological case. In case $M$ has a
boundary the normal invariant is $[M/\partial M, G/O]_*$, so if $M$ is
closed $[M\times D^n/\partial, G/O]_*$ is the same as
$[\Sigma^n(M_+),G/O]_*$ which is the reason to prefer $[M_+,G/O]_*$ to
the free homotopy classes $[M,G/O]$, even though it is the same.

There is a map $\cs(M)\to [M_+,G/O]_*$
and similarly for the $\Top$ and $\pl$ case defined as follows:
Given a homotopy equivalence $M_1\simeq M$ of smooth
manifolds, the uniqueness of the Spivak normal fibration \cite{spi1} produces a
commutative diagram 
\[
\xymatrix{
M_1\ar[d]\ar[dr]\\
M\ar[r]&BG
}
\]

The smooth structures on $M_1$ and $M$ produce lifts to $BO$ and since
$BO$ and $BG$ are loop spaces this produces a map to $G/O$, the fibre. In
case $M$ has a boundary, these lifts agree on the boundary, so we get a
basepoint preserving map $M/\partial M\to G/O$.

The last term in the surgery exact sequence is the Wall group. To indicate
this consider a degree one normal map
\[
\xymatrix{
\nu_{M_1}\ar[d]\ar[r]&\xi\ar[d]\\
M_1\ar[r]&M
}
\]

\noindent i.~e. a map sending the fundamental class of $M_1$ to the fundamental class
of $M$, covered by a map of bundles from the normal bundle of $M_1$ to some
bundle over $M$ (vector bundle, $\Top$-bundle or $\pl$-bundle) The problem of
surgery is to produce a bordism $W$ from $M_1$ to $M_2$ covered by a map of
bundles, such that $M_2\to M$ is a homotopy equivalence. Classically this is
done by making the map $M_1\to M$ highly connected by surgery, i.~e.
whenever there is an obstruction
\[
\xymatrix{
\SS^i\ar[r]\ar[d]&D^{i+1}\ar[d]\\
M_1\ar[r]&M
}
\]

\noindent to the map being a homotopy equivalence , $\SS^i\to M$ is replaced by an
embedding $\SS^i\times D^{n-i}\to M$ (which exists when $i$ is small, the
extension of $\SS^i\to M$ to $\SS^i\times D^{n-i}\to M$ is assured by the
bundle information), and then replace $\SS^i\times D^{n-i}$ by $D^{i+1}\times
\SS^{n-i-1}$. Actually we need the bordism covered by bundle maps, but the bordism is obtained
by gluing $M\times I$ and $D^{i+1}\times D^{n-i}$ along $\SS^i\times D^{n-i}\subset M\times 1$ and 
$\SS^i\times D^{n-i}\subset \partial(D^{i+1}\times D^{n-i})$. This process breaks down when we reach the middle dimension,
and the obstruction is not understood in low dimensions. Using
Ranickis algebraic surgery point of view \cite{ra4} however, we can immediately pass
to algebra, and we then obtain an algebraically defined obstruction in
an algebraically defined group, $L_n^h(\ZZ\pi)$, $\pi=\pi_1(M)$, which is an obstruction in all dimensions, but maybe not
the whole obstruction in low dimensions.

There is a map
 \[
[M_+,G/O]_*\to L_n(\ZZ\pi)
\]
\noindent defined as follows: Embed the smooth manifold in $\RR^k$, $k$ large, let $D(M)$
be the normal disk bundle, $S(M)$ the normal sphere bundle. The Thom map is
the map from $\SS^k$ thought of as $\RR^k$ with an extra point at infinity to
the Thom space $D(M)/S(M)$ sending everything outside $D(M)$ to the
collapsed point. A map $M\to G/O$ gives rise to another bundle $\xi$ over $M$ together with a fibre homotopy equivalence 
of the corresponding sphere bundles. Consider the composite $\SS^k \to D(M)/S(M)\to D(\xi)/S(\xi)$. Making this map transverse to the
$0$-section in $\xi$ produces a manifold $M_1$, and a bundle map from the normal bundle of $M_1$ to $\xi$. The fundamental class of
$M_1$ is sent to the fundamental class of $M$ by a Thom isomorphism argument, so we have a surgery problem, and we now pass to 
algebra to produce an element in $L^h_n(\ZZ\pi)$.

The bottom part of the surgery exact sequence 
\[
\cs_h(M)\to [M_+,G/O]_*\to L_n^h(\ZZ\pi)
\]

\noindent is now established, and similarly for the case with boundary. The composite is obviously the
zero map since starting with a homotopy equivalence there is no obstruction
to obtaining a homotopy equivalence. In low dimensions however, it may not be
true that we can perform the surgery to obtain a homotopy equivalent
manifold, even though the algebraically defined
surgery obstruction vanishes. In dimensions at least $5$ the necessary
embedding theorems are available so we have exactness in the sense of
pointed sets. There is no group structure on the structure set, but in the
case the manifold is of the form $N\times I$, there is an obvious associative monoid
structure on the structure set and a group structure on the other terms, and the maps are indeed homomorphisms. In dimensions at least
$5$ this monoid structure is easily seen to be a group structure

The sequence extends to the left as follows: Given a closed smooth manifold
$M^n$ with fundamental group $\pi$ and an element in $L_{n+1}(\ZZ\pi)$ one
may attempt to produce a bordism
\[W\to M\times I
\]
with $\partial W$ the disjoint union of $M$ and $M_1$, covered by bundle
maps, realizing the given surgery obstruction, with $M\to M$ the
identity and $M_1\to M$ a homotopy equivalence. The idea is now to send the given element to this homotopy equivalence thought of as
an element in the structure set. This is always possible
when the dimension of $M$ is at least $5$. In low dimensions it may or
may not be possible, and it is not clear we get a well defined map 
$L_{n+1}^h(\ZZ\pi)\to\cs_h(M)$ either in low dimensions. So in general we
only get a partially defined, maybe not well defined action of
$L_{n+1}(\ZZ\pi)$ on $\cs(M)$.

Given an element in $[\Sigma(M_+),G/O]_*$, a transversality argument as
above produces a bordism $W$ and a map $W\to M\times I$ covered by bundle
maps with $\partial W$ the disjoint union of $M$ and $M$ and the map on the
boundary the identity of $M$ to $M\times 0$, $M\times 1$ respectively. We
thus get
\[
\xymatrix{
[\Sigma M_+,G/O]\ar[r]& L^h_{n+1}(\ZZ\pi)\ar@{-->}[r]& \cs(M)
}
\]

\noindent The second map is defined on the image of the first map, and the composite
is obviously trivial.

It should now be clear how to extend the sequence to the left, and since it
will be a manifold with an $I$-factor it will be groups and homomorphisms to
the left

The surgery groups of the trivial group were essentially calculated by
Kervaire and Milnor to be $\ZZ$ for $n\equiv 0 (4)$, $\ZZ/2$ for $n\equiv 2
(4)$ and $0$ otherwise. The surgery obstruction for $n\equiv 0 (4)$ is given
as follows: Do surgery below the middle dimension (also possible in dimension
$4$), the kernel in homology is now an even, symmetric nonsingular matrix over $\ZZ$. Such a matrix has index 
divisable by $8$, and the obstruction is the index divided by $8$. This can
also be calculated as the difference of the indexes of the
manifolds divided by 8.

Since every $\pl$-manifold in dimension less than $7$ is smoothable, Rohlins theorem states that if $M^4$ is a smooth or $\pl$ $4$-manifold
with $w_1=w_2=0$, then the index of $M$ is divisable by 16. Index=16 however, is realizable using the Kummer surface, and using
connected sum we may realize every multiple of 16.

\section{The surgery exact sequence for a disk}

Now consider the $\pl$ disk. The surgery exact sequence is
\[
\pi_{n+1}(G/\pl)\to L_{n+1}(\ZZ)\to \cs(D^n \text{ rel. } \partial) \to \pi_n(G/\pl)\to L_n(\ZZ)
\]

when $n\ge 5$ consider an element in $\cs(D^n \text{ rel. } \partial)$
\[
\xymatrix{
\SS^{n-1}\ar[d]\ar[r]^{id}&\SS^{n-1}\ar[d]\\
W\ar[r]&D^n
}
\]

Cutting out a small open disc in the interior of $W$ produces an $h$-cobordism hence a product, so $W$ is a disk.
Using a cone construction we get a $\pl$-homeomorphism to $D^n$ homotopic to the original map relative to the boundary. This latter part
is of course what does not work in the smooth case. This shows $\pi_n(G/\pl)\cong L_n(\ZZ)$ for $n\ge 5$. For $n<5$, Kervaire and Milnor
use that $\pl/O$ is $6$-connected and the $J$-homomorphism $O\to G$ is known in low dimensions, showing that 
\[
\pi_n(G/\pl)=\begin{cases} \ZZ\qquad n\equiv 0 (4)\\\ZZ/2\qquad n\equiv 2 (4)\\0\qquad n\equiv 1 (2)
\end{cases}
\]

\noindent Let us analyze the picture in low dimensions
\[
\pi_5(G/O)\to L_5(\ZZ)\to \cs(D^4 \text{ rel. } \partial) \to \pi_4(G/O)\to L_4(\ZZ).
\]

\noindent $L_5(\ZZ)=\pi_5(G/O)=0$ and $\pi_4(G/O)=L_4(\ZZ)=\ZZ$, but the map by Rohlins theorem is multiplication by 2.

\begin{Obs} Surgery theory suggests $\cs(D^4 \text{ rel. } \partial)=0$.
\end{Obs}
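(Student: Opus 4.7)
The proof plan is a straightforward diagram chase in the surgery exact sequence, with the important caveat that exactness in dimension $4$ is exactly what is in question. The observation is really the statement that \emph{if} one formally applies the surgery machinery as one would in higher dimensions, then it forces the structure set to vanish.

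First I would write out the relevant portion of the sequence
\[
\pi_5(G/O)\to L_5(\ZZ)\to \cs(D^4 \text{ rel. } \partial) \to \pi_4(G/O)\to L_4(\ZZ),
\]
and substitute the group values recorded just above the statement: $\pi_5(G/O)=0$, $L_5(\ZZ)=0$, $\pi_4(G/O)=\ZZ$, and $L_4(\ZZ)=\ZZ$. Second, I would invoke Rohlin's theorem, which, as noted in the excerpt, identifies the rightmost map $\pi_4(G/O)\to L_4(\ZZ)$ with multiplication by $2$ on $\ZZ$. In particular this map is injective, so its kernel is trivial.

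Third, assuming exactness of the sequence at $\cs(D^4 \text{ rel. } \partial)$ and at $\pi_4(G/O)$, the map $\cs(D^4 \text{ rel. } \partial)\to \pi_4(G/O)$ has image equal to the kernel of multiplication by $2$, which is $0$. On the other side, the map $L_5(\ZZ)\to \cs(D^4 \text{ rel. } \partial)$ starts at the trivial group, so its image is $0$ as well. Exactness then forces $\cs(D^4 \text{ rel. } \partial)=0$. This is the formal content of the observation.

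The main obstacle — and the reason the statement is only a suggestion rather than a theorem — is precisely the failure of the surgery machinery in dimension $4$ discussed earlier in the text: the obstructions needed to realize elements of $L_{n+1}(\ZZ\pi)$ by bordisms and to complete surgeries below the middle dimension rely on embedding theorems that are not available when $n=4$, so neither the action of $L_5(\ZZ)$ on $\cs(D^4\text{ rel. }\partial)$ nor the exactness at $\cs(D^4\text{ rel. }\partial)$ is justified by the general theory. Thus the diagram chase is clean, but the input hypothesis (exactness in dimension $4$) is exactly the smooth four-dimensional Poincar\'e conjecture in disguise, which is why this can only be stated as an observation that surgery theory \emph{suggests} the answer $0$.
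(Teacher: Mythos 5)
Your diagram chase is precisely the argument the paper intends: write out the surgery exact sequence for $D^4$ rel.\ $\partial$, substitute $\pi_5(G/O)=L_5(\ZZ)=0$ and $\pi_4(G/O)=L_4(\ZZ)=\ZZ$, cite Rohlin's theorem to see the right-hand map is multiplication by $2$ hence injective, and conclude formally that the structure set must vanish, with the caveat that exactness in dimension $4$ is exactly what surgery theory fails to supply. This matches the paper's (implicit) reasoning word for word.

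One small inaccuracy in your closing remarks, though: the observation $\cs(D^4 \text{ rel. } \partial)=0$ is \emph{not} the smooth four-dimensional Poincar\'e conjecture in disguise. The structure set as defined in this paper classifies manifolds up to $h$-cobordism, not up to diffeomorphism, so its vanishing only asserts that any such manifold is $h$-cobordant rel.\ boundary to $D^4$. As the paper itself notes immediately after the observation, ``This is actually true as we shall see (in the equivalent case of $\SS^4$) in the next section,'' where it is deduced from the Kervaire--Milnor classification; meanwhile the smooth $4$-dimensional Poincar\'e conjecture (diffeomorphism, not $h$-cobordism) remains open. So the hedge ``suggests'' refers to the unjustified exactness, not to any equivalence with an open conjecture.
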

\noindent This is actually true as we shall see (in the equivalent case of $\SS^4$) in the next section.

Next consider $n=3$
\[
\pi_4(G/O)\to L_4(\ZZ)\to \cs(D^3 \text{ rel. }\partial) \to \pi_3(G/O)\to L_3(\ZZ)
\]

We have $\pi_3(G/O)=L_3(\ZZ)=0$, $\pi_4(G/O)=L_4(\ZZ)=\ZZ$, but the same argument as before says the map is multiplication by $2$, so
we get
 
\begin{Obs} Surgery theory suggests $\cs(D^3 \text{ rel. } \partial)=\ZZ/2$.
\end{Obs}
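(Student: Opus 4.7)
The plan is to read off $\cs(D^3 \text{ rel. } \partial)$ directly from the displayed surgery exact sequence, using the same inputs that gave the preceding Observation. First I would note that $\pi_3(G/O)=0$ and $L_3(\ZZ)=0$ by the computations of $\pi_*(G/O)$ (via $\pl/O$ being $6$-connected together with the $J$-homomorphism) and of the $L$-groups of the trivial group that were recorded in the previous section. Consequently, to the extent that we have exactness at $\cs(D^3 \text{ rel. } \partial)$, the map into $\pi_3(G/O)$ is automatically trivial and
\[
\cs(D^3 \text{ rel. } \partial)\;\cong\;\mathrm{coker}\!\left(\pi_4(G/O) \to L_4(\ZZ)\right).
\]

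Next I would identify this cokernel. Both groups are isomorphic to $\ZZ$, and the relevant map is the same map that appeared in the surgery sequence for $D^4$ rel $\partial$ used in the preceding Observation. By the argument recalled at the end of Section~1, a normal invariant on $S^4$ gives by transversality a surgery problem whose total space is a smooth closed spin $4$-manifold, and Rohlin's theorem forces its index to be divisible by $16$; thus the surgery obstruction, which is the index divided by $8$, lies in $2\ZZ$. The Kummer surface shows that $2\ZZ$ is indeed attained. Hence the map is multiplication by $2$, and its cokernel is $\ZZ/2$, which is the claimed value.

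The main caveat, and the reason the Observation uses the word \emph{suggests} rather than a stronger verb, is that exactness of the surgery sequence at $\cs(D^3 \text{ rel. } \partial)$ is not actually established in dimension $3$: the embedding theorems needed to pass from the vanishing of the algebraic surgery obstruction to an honest homotopy equivalence of manifolds require dimension at least $5$, and the extension of the sequence one step to the left is likewise only partially defined in low dimensions. The computation above is therefore a surgery-theoretic prediction rather than a theorem; the true value of $\cs(D^3 \text{ rel. } \partial)$ has to be determined by genuinely $3$-dimensional techniques.
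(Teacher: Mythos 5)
Your proposal follows exactly the paper's own reasoning: $\pi_3(G/O)=L_3(\ZZ)=0$, $\pi_4(G/O)=L_4(\ZZ)=\ZZ$, and the map $\pi_4(G/O)\to L_4(\ZZ)$ is multiplication by $2$ by Rohlin's theorem, so the predicted cokernel is $\ZZ/2$. Your caveat about the word \emph{suggests} is also precisely the paper's point (the sequence is not known to be exact below dimension $5$, and indeed Perelman's work shows the prediction is false).
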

\noindent This is of course not true by Perelmans work. 

In dimension $2$ we have 
\[
\pi_3(G/O)\to L_3(\ZZ)\to \cs(D^2 \text{ rel. } \partial) \to \pi_2(G/O)\to L_2(\ZZ)
\]

\noindent and the map $\pi_2(G/O)\to L_2(\ZZ)$ is an isomorphism since the Kervaire invariant element is realized by $T^2\to S^2$ covered by
bundles defined by the left invariant framing of $T^2$. 

\begin{Obs} Surgery theory suggests $\cs(D^2 \text{ rel. }
\partial)=0$.
\end{Obs}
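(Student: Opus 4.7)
The plan is to read off the conclusion directly from the surgery exact sequence displayed immediately above the Observation,
\[
\pi_3(G/O)\to L_3(\ZZ)\to \cs(D^2 \text{ rel. } \partial) \to \pi_2(G/O)\to L_2(\ZZ),
\]
using the two computational inputs already in hand. By Kervaire--Milnor's calculation of the $L$-groups of the trivial group, $L_3(\ZZ)=0$ (they vanish in odd dimensions). And the rightmost map $\pi_2(G/O)\to L_2(\ZZ)$ was just identified as an isomorphism $\ZZ/2\to\ZZ/2$, via the explicit realization of the Kervaire invariant element by the degree one normal map $T^2\to\SS^2$ built from the left-invariant framing of the torus.

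Given these two facts, the Observation is a purely formal consequence of exactness of the sequence as pointed sets, in two short steps. Exactness at $\pi_2(G/O)$ identifies the image of the forgetful map $\cs(D^2\text{ rel. }\partial)\to\pi_2(G/O)$ with the kernel of the adjacent isomorphism $\pi_2(G/O)\to L_2(\ZZ)$, which is the basepoint. So the forgetful map is constant. Exactness at $\cs(D^2\text{ rel. }\partial)$ then identifies this basepoint preimage (which is all of $\cs(D^2\text{ rel. }\partial)$) with the image of $L_3(\ZZ)=0$, a single point. Hence $\cs(D^2\text{ rel. }\partial)$ consists only of the basepoint.

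The main thing to stress is that the verb \emph{suggests} in the statement carries real weight: the whole argument lives at the algebraic level and is exactly parallel to the preceding two Observations for $D^3$ and $D^4$. Surgery theory is not being claimed as a theorem in low dimensions here -- indeed the analogous computation for $D^3$ produces $\ZZ/2$, which the paper notes is falsified by Perelman, while for $D^4$ it produces $0$, which is correct. There is thus no genuine obstacle: the only work is running the exact sequence with the Kervaire invariant identification $\pi_2(G/O)\cong L_2(\ZZ)$ already established just above.
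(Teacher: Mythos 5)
Your argument is exactly the paper's: read the exact sequence, note $L_3(\ZZ)=0$ and that $\pi_2(G/O)\to L_2(\ZZ)$ is an isomorphism via the Kervaire invariant realization $T^2\to\SS^2$, and conclude $\cs(D^2\text{ rel. }\partial)=0$ by exactness. The emphasis on ``suggests'' matches the paper's framing of these low-dimensional Observations as formal readings of the sequence rather than theorems.
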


This is of course correct. 

In dimension $1$ we have 
\[
\pi_2(G/O)\to L_2(\ZZ)\to \cs(D^1 \text{ rel. } \partial) \to \pi_1(G/O)\to L_1(\ZZ)
\]
\noindent and the map $\pi_2(G/O)\to L_2(\ZZ)$ again is an isomorphism for the same reason as above. 

\begin{Obs} Surgery theory suggests $\cs(D^1 \text{ rel. } \partial)=0$.
\end{Obs}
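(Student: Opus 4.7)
The plan is to evaluate every term and every map in the displayed surgery exact sequence
\[
\pi_2(G/O)\to L_2(\ZZ)\to \cs(D^1 \text{ rel. } \partial) \to \pi_1(G/O)\to L_1(\ZZ)
\]
and extract the conclusion directly from exactness, exactly as in the previous three observations. So the proposal is not to produce new mathematics but to assemble the pieces that the text has just put on the table.

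First I would identify the outer terms. The Kervaire--Milnor calculation recalled earlier gives $\pi_n(G/\pl)=0$ for odd $n$, and, since $\pl/O$ is $6$-connected and we are in a very low range, the same vanishing transports to $\pi_1(G/O)=0$. On the Wall-group side, $L_n(\ZZ)=0$ for $n$ odd, so $L_1(\ZZ)=0$. Thus both outside groups in the sequence above are zero, and the sequence reduces to
\[
\pi_2(G/O)\to L_2(\ZZ)\to \cs(D^1 \text{ rel. } \partial)\to 0.
\]
Next I would invoke the already-stated fact that the map $\pi_2(G/O)\to L_2(\ZZ)$ is an isomorphism (both groups are $\ZZ/2$, and the Kervaire invariant generator is realized by the degree-one normal map $T^2\to S^2$ covered by the bundle data coming from the left-invariant framing of $T^2$). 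Exactness at $L_2(\ZZ)$ then says that the map $L_2(\ZZ)\to \cs(D^1 \text{ rel. }\partial)$ is the zero map on the pointed-set level. Since $\cs(D^1\text{ rel. }\partial)\to\pi_1(G/O)=0$ forces every structure to lie in the image of this action on the basepoint, every structure is the basepoint. This gives $\cs(D^1\text{ rel. }\partial)=0$, which is what surgery theory predicts.

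The one subtlety worth flagging is that in dimensions below $5$ the surgery sequence is not known to be exact in the strong sense; it is a sequence of pointed sets with a possibly only partially defined $L$-action, as emphasized in the earlier part of the excerpt. Here this is not an obstacle, because the only facts actually used are (i) the algebraically defined surgery obstruction map $[M_+,G/O]_*\to L_n(\ZZ\pi)$, which is defined in all dimensions, is surjective in the relevant degree, and (ii) the map $\cs(D^1\text{ rel. }\partial)\to[D^1/\partial D^1,G/O]_*$ lands in a trivial group. Both statements are purely algebraic or homotopical and do not require the delicate embedding theorems that fail in low dimensions. So the surgery-theoretic prediction $\cs(D^1\text{ rel. }\partial)=0$ is unambiguous, and (as the paper will undoubtedly remark) it is trivially consistent with the direct fact that every compact $1$-manifold with boundary $S^0$ is an interval and any homotopy equivalence to $D^1$ fixing $\partial$ is isotopic rel boundary to a diffeomorphism.
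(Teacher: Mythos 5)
Your proof is correct and matches the paper's own (very terse) reasoning: the paper merely records the exact sequence, notes that $\pi_2(G/O)\to L_2(\ZZ)$ is an isomorphism ``for the same reason as above,'' and leaves the vanishing of $\pi_1(G/O)$ and $L_1(\ZZ)$ implicit, from which the observation follows. You have simply filled in those term computations explicitly and flagged (correctly) that ``suggests'' is being used because exactness is only heuristic in low dimensions.
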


\noindent This is of course correct.

Finally in dimension $0$ we have 
\[
\pi_1(G/O)\to L_1(\ZZ)\to \cs(D^0 \text{ rel. } \partial) \to \pi_0(G/O)\to L_0(\ZZ)
\]
\noindent Leading to the obvious
\begin{Obs} Surgery theory suggests $\cs(D^0)=0$.
\end{Obs}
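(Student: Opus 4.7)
The plan is simply to plug in the values of the four groups surrounding $\cs(D^0\text{ rel. }\partial)$ in the displayed surgery exact sequence and read off the structure set. From the Kervaire--Milnor calculations already cited, one has $L_0(\ZZ) = \ZZ$ (given by signature/$8$) and $L_1(\ZZ) = 0$. For the normal invariant groups, I would use the fibration $O \to G \to G/O$ and the fact that the stable $J$-homomorphism $\pi_i(O) \to \pi_i(G)$ is an isomorphism in the relevant degrees: on $\pi_0$ both groups are $\ZZ/2$ (orientation respectively degree) and the map sends a reflection to a degree $-1$ map; on $\pi_1$ both groups are $\ZZ/2$ (the first stable stem) and again the map is onto. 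The long exact sequence of the fibration then gives $\pi_0(G/O) = \pi_1(G/O) = 0$.

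Feeding these values into
\[
\pi_1(G/O)\to L_1(\ZZ)\to \cs(D^0 \text{ rel. } \partial) \to \pi_0(G/O)\to L_0(\ZZ)
\]
collapses it to $0 \to 0 \to \cs(D^0 \text{ rel. } \partial) \to 0 \to \ZZ$, and exactness at the middle term immediately forces $\cs(D^0 \text{ rel. } \partial) = 0$. Note that, unlike in the previous low-dimensional cases, no computation of a Rohlin-type $\times 2$ factor is needed: the constraint from $L_0(\ZZ)$ sits on the right-hand side and never touches the structure set.

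There is really no step that can function as an obstacle here, which is why the observation is trivial; the only nontrivial input is the Kervaire--Milnor identification of $\pi_i(G/O)$ and $L_i(\ZZ)$ in low degrees, which has already been invoked above for the analyses of $D^1$ and $D^2$. As a sanity check, one sees geometrically that the structure set of a point is a singleton, since any compact $0$-manifold homotopy equivalent to a point \emph{is} a point and any two such are joined by the trivial $h$-cobordism, so surgery theory indeed produces the correct (and obvious) answer.
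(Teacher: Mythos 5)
Your proposal is correct and takes essentially the same approach as the paper: the paper just displays the exact sequence around $\cs(D^0 \text{ rel. } \partial)$ and calls the conclusion ``obvious,'' having already noted earlier that $G/O$ is connected and simply connected so the flanking normal-invariant terms vanish. Your extra justification via the fibration $O \to G \to G/O$ and the $J$-homomorphism, and the remark that $L_0(\ZZ)$ sits to the right and imposes no constraint, is a fleshing-out of exactly the reasoning the paper leaves implicit.
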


These low dimensional phenomena do not make sense since surgery does not work in low dimensions, but they do have consequences in
higher dimensions and lead to relatively simple proofs of topological invariance of Pontrjagin classes, and the theory of topological
manifolds as developed by Kirby and Siebenmann.

\section{Dimension 4}

Let $M^4$ be a closed smooth $4$-dimensional manifold 

\begin{Thm}
Assume $[\Sigma(M_+), G/O]_*\to L_5(\ZZ\pi)$ is an epimorphism and $[M_+,G/O]_*\to L_4(\ZZ\pi)$ is a monomorphism, then 
$\cs(M)=0$.
\end{Thm}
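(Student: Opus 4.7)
The plan is to exploit the portion of the surgery exact sequence that \emph{is} available in dimension $4$. Given a homotopy equivalence $h:M_1\to M$ representing a class $[h]\in \cs(M)$, the normal-invariant construction of the previous section assigns to it an element $\eta(h)\in[M_+,G/O]_*$, and (as already noted in the excerpt) the composite $\cs(M)\to [M_+,G/O]_*\to L_4(\ZZ\pi)$ is the zero map, since starting from a homotopy equivalence there is no surgery obstruction to begin with. Thus $\eta(h)$ lies in the kernel of $[M_+,G/O]_*\to L_4(\ZZ\pi)$, and the monomorphism hypothesis forces $\eta(h)=0$. Consequently $h$ is normally bordant to the identity, giving a degree one normal map $(W^5,F)\to (M\times I,\mathrm{id})$ with $\partial W= M\sqcup M_1$, $F|_M=\mathrm{id}$ and $F|_{M_1}=h$.

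The obstruction to doing surgery on $(W,F)$ rel boundary so as to turn it into an $h$-cobordism is a class $\sigma(W,F)\in L_5(\ZZ\pi)$. To kill it, I would invoke the epimorphism hypothesis: choose $x\in[\Sigma(M_+),G/O]_*$ whose image is $\sigma(W,F)$, and unfold $x$ via the transversality construction into a normal bordism $(V^5,G)\to (M\times I,\mathrm{id})$ with both boundary components the identity on $M$ and with surgery obstruction $\sigma(V,G)=\sigma(W,F)$. Now glue a copy of $V$, with the appropriate orientation, to $W$ along their common identity copy of $M$. By additivity of the surgery obstruction under gluing of normal bordisms, the result $W':=V\cup_M W$ still has boundary $M\sqcup M_1$ and restricts to $\mathrm{id}$ and $h$ there, but now $\sigma(W',F')=0$.

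The final step is to perform the surgery on $W'$ to upgrade $F':W'\to M\times I$ to a homotopy equivalence. Here it is crucial that the bordism itself has dimension $5$, not $4$: the surgeries below the middle of $W'^5$ require embedding spheres of dimension $\le 2$ in a $5$-manifold, hence in codimension $\ge 3$, which is in Wall's classically valid range; and the vanishing of $\sigma(W',F')\in L_5(\ZZ\pi)$ is then precisely what is needed to also kill the class in the middle dimension. This is exactly the step that breaks down if one tries to do surgery directly on the $4$-manifold $M$ itself, but here we have escaped to the interior of a $5$-dimensional bordism. After this surgery the inclusions $M\hookrightarrow W'$ and $M_1\hookrightarrow W'$ both become homotopy equivalences, so $W'$ is an $h$-cobordism realizing $h\sim \mathrm{id}$; thus $[h]=0$ in $\cs(M)$, and $\cs(M)=0$.

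The main obstacle, and the delicate point, is justifying the last step: ensuring that the surgery on $W'^5$ really can be carried out in the presence of an arbitrary fundamental group $\pi$. One needs the standard $5$-dimensional surgery and Whitney-trick package applied in codimension $\ge 3$ to the interior of $W'$, together with the assertion that the full surgery obstruction of $(W',F')$ in this range is captured by the single invariant $\sigma(W',F')\in L_5(\ZZ\pi)$ that we arranged to vanish; once this is in place, the pieces assemble into the argument above without further interaction with the badly behaved dimension~$4$ theory on the boundary.
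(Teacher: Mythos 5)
Your proposal is correct and follows essentially the same line of argument as the paper: use the vanishing of the composite to $L_4(\ZZ\pi)$ together with the monomorphism hypothesis to kill the normal invariant, obtain a $5$-dimensional normal bordism from $h$ to the identity, use the epimorphism hypothesis to produce a correcting bordism that cancels the $L_5$ surgery obstruction after gluing, and then perform the surgery in dimension $5$ to get an $h$-cobordism. The final paragraph, flagging that only the $5$-dimensional surgery package (codimension $\ge 3$ embeddings and the middle-dimensional step governed by $L_5(\ZZ\pi)$) is being invoked rather than any problematic $4$-dimensional surgery, is exactly the implicit point in the paper's remark ``but we are in dimension $5$, so we may perform the surgery.''
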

\begin{proof}
An element in $\cs(M)$ is a homotopy equivalence $M_1\to M$. The composite map to $L_4(\ZZ\pi)$ is always zero so $\cs(M)\to
[M_+,G/O]_*$ must be the zero map. This means there is a normal cobordism $W$ with boundary the disjoint union of $M_1$ and $M$, and a
degree one normal map $W\to M\times I$. This normal map has a surgery obstruction $\sigma\in L_5(\ZZ\pi)$. Now choose an element in 
$[\Sigma(M_+),G/O]_*$ which hits $-\sigma$. This produces a normal cobordism $W_1\to M\times I$ which is the identity on the boundary.
Gluing $W$ and $W_1$ together the surgery obstruction is $0$, but we are in dimension $5$, so we may perform the surgery to obtain an
$h$-cobordism between $M_1$ and $M$ and a homotopy equivalence to $M\times I$.
\end{proof}

\begin{Cor}
Assume $M$ is a closed $4$-dimensional manifold homotopy equivalent to $\SS^4$. Then $M$ is $h$-cobordant to $\SS^4$
\end{Cor}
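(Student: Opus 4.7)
The plan is to apply the preceding theorem with $\SS^4$ itself playing the role of $M$, and then use the hypothesis $M \simeq \SS^4$ to produce an element of $\cs(\SS^4)$ that must be trivial. Since $\SS^4$ is simply connected, $\pi = 1$ and the Wall groups reduce to $L_5(\ZZ) = 0$ and $L_4(\ZZ) = \ZZ$ by the Kervaire--Milnor computation recalled earlier in the paper.

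First I would verify the two hypotheses of the theorem for $\SS^4$. The epimorphism requirement on $[\Sigma(\SS^4_+), G/O]_* \to L_5(\ZZ)$ is automatic because the target is zero. For the monomorphism $[\SS^4_+, G/O]_* \to L_4(\ZZ)$, I would identify $[\SS^4_+, G/O]_*$ with the free homotopy classes $[\SS^4, G/O] = \pi_4(G/O) = \ZZ$, and invoke the description of the smooth $L_4$ surgery obstruction given just before this section: the obstruction is the difference of indices divided by $8$, and Rohlin's theorem, as recalled in the paper, forces this difference to be divisible by $16$ on closed smooth $4$-manifolds with $w_1 = w_2 = 0$. Hence the map $\pi_4(G/O) \to L_4(\ZZ)$ is multiplication by $2$, which is injective.

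Having verified the hypotheses, the theorem yields $\cs(\SS^4) = 0$. The corollary's hypothesis supplies a homotopy equivalence $f \colon M \to \SS^4$, which represents an element of $\cs(\SS^4)$; vanishing of the structure set means $f$ is equivalent in $\cs(\SS^4)$ to the identity of $\SS^4$, and by the very definition of that equivalence there is a manifold $W$ with $\partial W = M \sqcup \SS^4$ and a map $W \to \SS^4$ restricting to $f$ and $\mathrm{id}_{\SS^4}$ on the two boundary components, for which both inclusions $M, \SS^4 \hookrightarrow W$ are homotopy equivalences. Such a $W$ is by definition an $h$-cobordism between $M$ and $\SS^4$. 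The one delicate point in the whole argument is the identification of $\pi_4(G/O) \to L_4(\ZZ)$ as multiplication by $2$; this is not formal but depends on Rohlin's congruence and the realizability of index $16$ by the Kummer surface, both of which have already been discussed in the excerpt and so can simply be quoted here.
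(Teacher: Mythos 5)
Your proof is correct and follows essentially the same route as the paper: verify the theorem's hypotheses for $\SS^4$ using $L_5(\ZZ)=0$ and the fact that $\pi_4(G/O)\to L_4(\ZZ)$ is multiplication by $2$ via Rohlin, conclude $\cs(\SS^4)=0$, and unwind the definition of the structure set to obtain the $h$-cobordism.
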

\begin{proof}
$L_5(\ZZ)=0$ so the first condition in the theorem above is obviously satisfied. $G/O$ is connected and simply connected, so 
$[{\SS^4}_+,G/O]_* = \pi_4(G/O)=\ZZ$ and the map to $L_4(\ZZ)=\ZZ$ is multiplication by $2$ by Rohlins theorem as above. This means
$\cs(\SS^4)=0$ hence $M$ is $h$-cobordant to $\SS^4$.
\end{proof}

\begin{proof}[Proof of Theorem \ref{four}] We want to show that $M$ can be smoothly embedded in $\RR^5$. 
By the corollary above there is an $h$-cobordism $W$ between $M$ and $\SS^4$. Now $W\times \SS^1$ is a
$6$-dimensional $h$-cobordism and $\wh(\ZZ)=0$, so $M\times \SS^1$ is diffeomorphic to $\SS^4\times \SS^1$. Hence the universal covers
$M\times \RR$ and $\SS^4\times \RR$ are diffeomorphic, but $\SS^4\times \RR$ is diffeomorphic to $\RR^5\setminus 0$.
\end{proof}

\section{Bounded surgery}
The material in this section is taken from \cite{fp2}.
\begin{Def} Let $X$ be a metric space, $M_1$ and $M$ topological spaces, $p:M\to X$ a proper map. Then a map
$f:M_1\to M$ is said to be a bounded homotopy equivalence if there is a map $g:M\to M_1$ and homotopies $G$ from $g\circ f$ to the
identity and $H$ from $f\circ g$ to the identity such that $p(H(z\times I))$ and $p(f(K(y\times I)))$ have uniformly bounded diameter.
\end{Def}

\begin{Rem} This concept of course is only interesting when $M_1$ and $M$ are non-compact
\end{Rem}

In \cite{fp2} a bounded surgery theory was developed classifying manifolds up to bounded homotopy equivalence for a large class of
metric spaces $X$, e.~g. $X=\RR^n$, and a surgery exact sequence was established in dimensions $\ge 5$. The normal invariant term is
the same as in the compact case, but the $L$-group term was defined using certain additive categories that give an algebraic criterion
for bounded homotopy equivalence similarly to the Whitehead theorem in the classical case. This additive category is denoted
$\cc_X(\ZZ)$ in the simply connected case. We thus have an exact sequence:

\[
\ldots \to [\Sigma(M_+),G/O]_*\to L_{n+1}(\cc_X(\ZZ))\to \cs_b(M\to X)\to [M_+,G/O]_*\to L_n(\cc_X(\ZZ))
\]

A specially interesting case is $X=\RR^n$ and $M=D^k\times \RR^n$ with $p$ projection on the second factor. In this case our
observations above lead to the following theorem:

\begin{Thm} When $n+k\ge 5$, the bounded structure set $\cs(D^k\times\RR^n\to \RR^n \text{ rel. } \partial)$ is $0$ when $k\ne 3$,
and $\ZZ/2$ when $n=3$.
\end{Thm}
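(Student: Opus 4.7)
The plan is to substitute $M = D^k \times \RR^n$ and $X = \RR^n$ (with $p$ the projection to the second factor) into the bounded surgery exact sequence recalled above, evaluate each of the four surrounding terms, and observe that the resulting sequence is identical (term for term) to the classical surgery exact sequence for $D^k$ rel $\partial$ analyzed in Section 1. The computation then follows by quotation of the Observations in that section. This reduction is the natural approach because the theorem statement already mirrors those Observations: $\ZZ/2$ appears exactly where the Kervaire/Rohlin obstruction did, and vanishes elsewhere.

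First I would identify the bounded $L$-groups. The Pedersen--Weibel delooping theorem for the additive category $\cc_{\RR^n}(\ZZ)$ gives
\[
L_m(\cc_{\RR^n}(\ZZ))\cong L_{m-n}(\ZZ),
\]
so the two $L$-terms in the bounded sequence (in degrees $n+k$ and $n+k+1$) collapse to $L_k(\ZZ)$ and $L_{k+1}(\ZZ)$. Next I would identify the two bounded normal-invariant terms. A squeezing / proper-approximation argument, characteristic of bounded surgery over $\RR^n$, shows that bounded based maps $D^k \times \RR^n\to G/O$ rel $\partial$ taken up to bounded based homotopy are the same as ordinary based maps $D^k/\partial D^k\to G/O$, namely $\pi_k(G/O)$; analogously the $\Sigma$-term becomes $\pi_{k+1}(G/O)$.

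After these identifications the bounded exact sequence becomes
\[
\pi_{k+1}(G/O)\to L_{k+1}(\ZZ)\to \cs_b(D^k\times \RR^n\to \RR^n \text{ rel. } \partial)\to \pi_k(G/O)\to L_k(\ZZ),
\]
and is exact because $n+k\ge 5$, so the bounded surgery machinery applies. But this is, term for term, the sequence written down in Section 1 to compute $\cs(D^k\text{ rel. }\partial)$. Reading off from the Observations there: the sequence forces the bounded structure set to be $\ZZ/2$ when $k=3$ (driven by $L_3(\ZZ)=0$ together with Rohlin's theorem that $\pi_4(G/O)\to L_4(\ZZ)$ is multiplication by $2$) and to vanish for $k\in\{0,1,2,4\}$. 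For $k\ge 5$ the same exact sequence, combined with the isomorphism $\pi_k(G/\pl)\cong L_k(\ZZ)$ recalled in Section 1, forces the bounded structure set to vanish as well.

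The principal obstacle is the two identifications made in the second paragraph: both rely on non-trivial Ferry--Pedersen--Weibel input (Pedersen--Weibel delooping of bounded $L$-theory, and bounded-to-proper squeezing for the normal invariants). Once these are in place the argument is essentially formal: the low-dimensional classical surgery exact sequence already worked out in Section 1 does all the remaining work, and the discrepancy between the ``surgery prediction'' $\ZZ/2$ and the actual (trivial, by Perelman) smooth structure set $\cs(D^3\text{ rel. }\partial)$ is exactly why the bounded structure set is genuinely $\ZZ/2$ here: bounded surgery \emph{does} work at the middle dimension because the $\RR^n$ factor provides room to perform the needed embeddings.
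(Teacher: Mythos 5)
Your proposal is essentially the paper's own proof: both rest on the two facts that crossing with $\RR^n$ is an isomorphism on the normal invariant term (since $\RR^n$ is contractible and the normal invariant term is homotopy-theoretic) and that algebraically $L_k(\ZZ)\to L_{n+k}(\cc_{\RR^n}(\ZZ))$ is an isomorphism (the paper cites Ranicki's lower $K$- and $L$-theory for this; your Pedersen--Weibel delooping is the same input), after which the answer is read off from the Observations in the low-dimensional section. You correctly read the theorem's ``$\ZZ/2$ when $n=3$'' as the evident typo for $k=3$.
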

\begin{proof} Crossing with $\RR^n$ is obviously an isomorphism on the normal invariant term, since this term is homotopy theoretic,
and $\RR^n$ is contractible. One may prove algebraically \cite{ra6} that crossing with $\RR^n$ induces an isomorphism $L_k(\ZZ)\to
L_{n+k}(\cc_{\RR^n}(\ZZ))$, but then the observations above calculate the maps in the bounded surgery sequence which is exact when
$n+k\ge 5$.
\end{proof}

To study the difference between $\pl$ and $\Top$ manifolds, one considers $\pi_k(\widetilde \Top(n), \widetilde \pl(n))$. An element is given by a
homeomorphism $\Delta(k)\times\RR^n\to \Delta(k)\times\RR^n$, where $\Delta(k)$ is a standard $k$-simplex, restricting to a
$\pl$-homeomorphism $\sigma\times\RR^n\to\sigma\times\RR^n$ for each face $\sigma$ of $\Delta(k)$. Such an element is $0$ if there is
an isotopy, fixing the boundary, to a $\pl$-homeomorphism. If all such elements were $0$ a relatively simple argument due to Kirby,
would produce a $\pl$-structure on a topological manifold. In this direction we get the following 

\begin{Thm} When $n+k\ge 5$ there is a monomorphism $\pi_k(\widetilde \Top(n), \widetilde \pl(n)) \to \cs_b(D^k\times \RR^n\to \RR^n)$
\end{Thm}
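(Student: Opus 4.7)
The plan is to exhibit a geometric map
\[
\Phi: \pi_k(\widetilde\Top(n), \widetilde\pl(n)) \longrightarrow \cs_b(D^k \times \RR^n \to \RR^n \text{ rel. } \partial),
\]
and to deduce injectivity from the bounded PL $s$-cobordism theorem, which is available since the total dimension $n+k+1$ is at least $6$.

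To define $\Phi$, take a representative $h: \Delta(k)\times\RR^n \to \Delta(k)\times\RR^n$, a homeomorphism which is PL on $\sigma\times\RR^n$ for each face $\sigma$ of $\Delta(k)$. Pulling back the standard PL structure via $h^{-1}$ produces a PL manifold $M_h$, and $h: M_h \to \Delta(k)\times\RR^n$ is by construction a homeomorphism that is PL on the boundary $\partial\Delta(k)\times\RR^n$. After a standard modification (replacing $h$ by a face-PL isotopic representative that is the identity outside a compact neighborhood of $0$ in the $\RR^n$ direction, exploiting the compactness of $\Delta(k)$), one may assume $h$ is bounded over $\RR^n$. Identifying $\Delta(k)$ PL-homeomorphically with $D^k$, the pair $(M_h, h)$ then represents a class $\Phi([h])\in\cs_b(D^k\times\RR^n\to\RR^n\text{ rel. }\partial)$. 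Well-definedness on relative homotopy classes follows by running the same construction on a face-PL homotopy $H:\Delta(k)\times I\times\RR^n \to \Delta(k)\times I\times\RR^n$, which produces a bounded $h$-cobordism realizing the required equivalence of bounded structures.

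For injectivity, suppose $\Phi([h])=0$. Then $M_h$ is bounded $h$-cobordant rel boundary, over $\RR^n$, to the standard $D^k\times\RR^n$, by some cobordism $W$ of dimension $n+k+1\ge 6$. The bounded PL $s$-cobordism theorem exhibits $W$ as a bounded PL product over $\RR^n$; the product structure on the top slice yields a bounded PL homeomorphism $g: M_h \to D^k\times\RR^n$ that is the identity on $\partial D^k\times\RR^n$. Composing with $h$ and reading off the product structure along the $I$-direction of $W$ produces a face-PL isotopy from $h$ to the PL homeomorphism induced by $g$, whence $[h]=0$ in $\pi_k(\widetilde\Top(n),\widetilde\pl(n))$.

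The principal obstacle will be the final step: converting the bounded PL product structure on $W$ into a face-preserving isotopy in $\widetilde\Top(n)$ that restricts to $\widetilde\pl(n)$ on each face $\sigma\times\RR^n$. This requires a bounded-PL concordance-implies-isotopy result, carried out inductively on the faces of $\Delta(k)$ of increasing dimension, and uses in an essential way both the dimension bound $n+k\ge 5$ (to invoke the relevant product structure theorems) and the compactness of the simplex (to preserve bounded control throughout the isotopy). The reduction to a bounded representative $h$ in the definition of $\Phi$ similarly relies on a Kirby--Siebenmann-type straightening and forms a parallel technical ingredient.
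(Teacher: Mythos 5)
Your definition of the map matches the paper's (up to an unnecessary detour: in the bounded category a homeomorphism is automatically a bounded homotopy equivalence with control bound $0$, so no ``standard modification'' to make $h$ compactly supported in the $\RR^n$-direction is needed or even possible; the paper simply notes that any homeomorphism gives an element of the bounded structure set outright).

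The gap is in the injectivity step. Passing from $\Phi([h])=0$ to a bounded $h$-cobordism $W$, and then applying the bounded $s$-cobordism theorem to get a bounded PL homeomorphism $g:M_h\to D^k\times\RR^n$, is fine. But your claim that ``reading off the product structure along the $I$-direction of $W$ produces a face-PL isotopy from $h$ to the PL homeomorphism induced by $g$'' does not hold: the $I$-direction of $W$ parametrizes an $h$-cobordism of structure-set representatives, not an ambient isotopy of self-homeomorphisms of $\Delta(k)\times\RR^n$. What you actually have at this stage is the bounded homeomorphism $h\circ g^{-1}$, PL on the boundary, and you still need to produce a topological isotopy of it to the identity, rel boundary, staying bounded over $\RR^n$. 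You recognize this as ``the principal obstacle'' and defer it to a bounded-PL concordance-implies-isotopy theorem to be carried out inductively over faces, but you do not supply that argument, so the proof is incomplete. The paper avoids this machinery entirely by a simple device: it replaces $\RR^n$ by the weighted metric space $X$ in which $D^k\times\{x\}$ has the standard metric scaled by $\|x\|+1$, shows by a five-lemma argument that $\cs_b(D^k\times\RR^n\to X)\to\cs_b(D^k\times\RR^n\to\RR^n)$ is an isomorphism, and then notes that $X$-boundedness of $h\circ g^{-1}$, when $\RR^n$ is radially compactified to $D^n$, translates into smallness near $\partial D^n$, so that $h\circ g^{-1}$ extends by the identity to a homeomorphism of $D^k\times D^n$ which Alexander's trick then isotopes to the identity rel boundary. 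That is the content your argument is missing.
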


Since we have calculated the target this means that $\pi_k(\widetilde \Top, \widetilde \pl)$ is $0$ for $k\ne 3$ and for $k=3$ it is
either $0$ or $\ZZ/2$. Notice this implies topological invariance of Pontrjagin classes since the stable maps $BO\to B\pl$ and $B\pl\to B\Top$ are
then both rational homotopy equivalences.

\begin{proof} To define the map consider a homeomorphism $\Delta(k)\times\RR^n\to \Delta(k)\times\RR^n$ which restricts to a 
$\pl$ homeomorphism on each simplex in the boundary crossed with $\RR^k$ hence to a $\pl$-homeomorphism on the boundary. 
Identifying $D^k$ and $\Delta(k)$ a homeomorphism is obviously a bounded homotopy equivalence, bounded by $0$, so it defines an element
in the structure set (which is a group in this case). To see this map is monic, we change the metric on $D^k\times \RR^n$ such that
$D^k\times\{x\}$ has the standard metric on $D^k$ multiplied by $||x||+1$. Let us denote this metric space by $X$. We now have a
proper Lipschitz map $X\to \RR^n$, and it turns out that induces an isomorphism of structure sets $\cs(D^k\times\RR^n\to X)\to
\cs(D^k\times\RR^n\to \RR^n)$. This is a five lemma argument. On the normal invariant term the metric space plays no role, and on the
$L$-group term an algebraic argument provides the necessary argument, see \cite{fp2}. 
Now consider an element $h$ going to $0$ in $\cs(D^k\times\RR^n \to \RR^n)$. The above mentioned map obviously factors through
$\cs(D^k\times\RR^n\to X)$ since a homeomorphism is a bounded homotopy equivalence no matter what metric space it is measured in. This
means there is a bounded isotopy relative to the boundary of $h$ to a $\pl$-homeomorphism $g$ which is a uniformly bounded distance
from the identity when measured in $X$. Now consider $h\circ g^{-1}:D^k\times\RR^n \to D^k\times\RR^n$. This is the identity on the
boundary. Thinking of $D^k\times\RR^n$ as  an open subset of $D^k\times D^n$ in a radial way, one sees that this homeomorphism may be
completed to a homeomorphism of $D^k\times D^n$ by the identity because boundedness measured in $X$ translates to smallness near the
boundary of $D^n$. An Alexander isotopy now provides an isotopy relative to the boundary of $h\circ g^{-1}$ to the identity, so $h$ 
is isotopic to $g$.
\end{proof}

It can be proved that $\pi_3(\widetilde\Top(n),\widetilde\pl(n))=\ZZ/2$ when $n+k\ge 6$, in other words the above map is onto, 
but this requires a proof that a certain bounded homotopy equivalence is bounded homotopic to a homeomorphism using Quinns end theorem,
 and is probably not easier than Kirby-Siebenmanns original argument.

\section{Mean curvature flow}

Suppose that $M$ is a closed hypersurface in $\RR^{n+1}$ and $M_t$ is a variation of $M$.  That is, $M_t$ is a one-parameter family of hypersurfaces with $M_0=M$.  An easy computation shows the following first variation formula for volume
\begin{equation}
 \frac{d}{dt} \,   \, \Vol \, (M_t)  = \int_{M_t}  \langle \partial_t \, x , H \, \nn \rangle  \, . 
 \notag
\end{equation}
Here $x$ is the position vector, $\nn$ the unit normal, and $H$ the mean curvature scalar given by 
\begin{equation}
H=\text{div}_{M} (\nn)  = \sum_{i=1}^n \langle \nabla_{e_i} \nn , e_i \rangle \, , \notag
\end{equation}
where $e_i$ is an orthonormal frame for $M$.  Equivalently, 
$H$ is the sum of the principal curvatures of $M$.
 With this normalization, $H$ is $n/R$ on the round $n$-sphere of radius $R$.  
 
 If we think of the volume as a function or functional on hypersurfaces, then it follows from the first variation formula that the gradient of volume is
 \begin{equation}
 \nabla\, \Vol=H\,\nn\, .\notag
 \end{equation}

The rest of this paper is about the negative gradient flow of volume (on the space of hypersurfaces); so the hypersurface is moving in the direction of steepest descent for volume.  The flow is called the mean curvature flow or MCF for short and a one-parameter family of hypersurfaces  $M_t \subset \RR^{n+1}$  flows by mean curvature precisely if 
\begin{equation}
	\partial_t \, x = - H \, \nn \, .\notag 
\end{equation}
In words, under the mean curvature flow, a hypersurface locally moves in the direction where the volume element decreases the fastest.    The flow has the effect of contracting a closed hypersurface, eventually leading to its extinction in finite time.

An easy consequence of the first variation is that if $M_t$ flows by mean curvature, then
\begin{equation}
	\frac{d}{dt} \,   \Vol (M_t) = -\langle \nabla \,\Vol, \nabla \,\Vol\rangle=- \int_{M_t} \, H^2  \, . \notag
\end{equation}

Our chief interest here is what happens before a hypersurface becomes extinct.  Is it possible to bring the hypersurface in general position so that one can describe and classify the changes that it goes through?  Is it possible to piece together information about the original hypersurface from the changes that it goes through under the flow?  In what follows, we will discuss some of the known results addressing these questions.

 \begin{figure}[htbp]
\centering\includegraphics[totalheight=.4\textheight, width=1\textwidth]{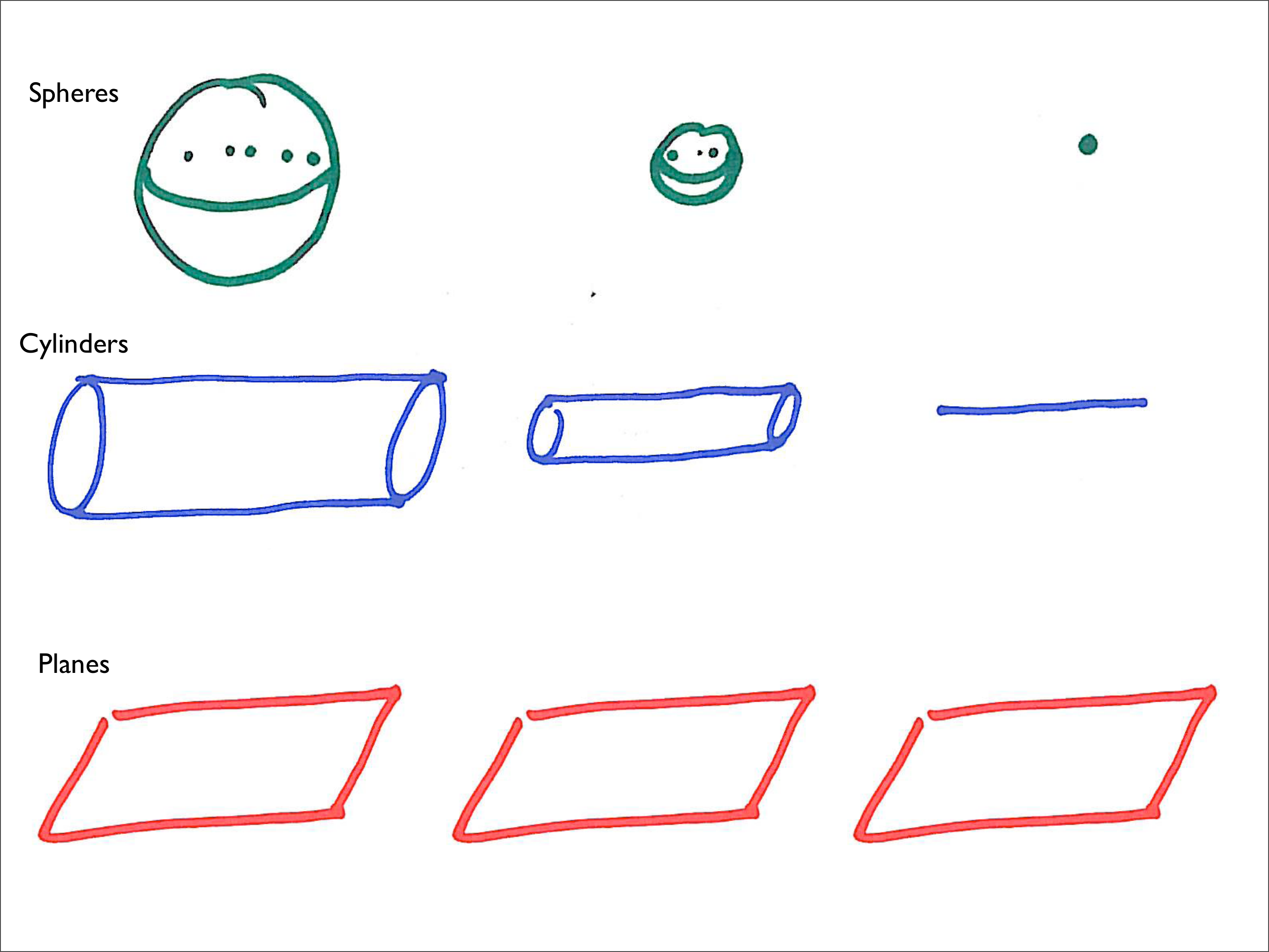}
\caption{Cylinders, spheres and planes are self-similar solutions of the mean curvature flow.
The shape is preserved, but the scale changes with time.}   
  \end{figure}

\subsection{Curve shortening flow}

When $n=1$ and the hypersurfaces are curves, the mean curvature flow is usually called the curve shortening flow.
Building on earlier work of Gage,
Gage and  Hamilton, \cite{GH}, showed in 1986 that  simple closed convex curves remain smooth and convex under the curve shortening flow and, eventually, become extinct in a ``round point''.
 More precisely, they showed that the flow became extinct in a point and if the flow is rescaled to keep the enclosed area constant, then the resulting curves converge to a round circle.  
A year later in 1987 M. Grayson, \cite{G}, showed that any simple closed curve eventually becomes convex
  under the curve shortening flow.
Thus, by the result of Gage-Hamilton, it becomes extinct in a ``round point''.

\begin{figure}[htbp]
\centering\includegraphics[totalheight=.35\textheight, width=.95\textwidth]{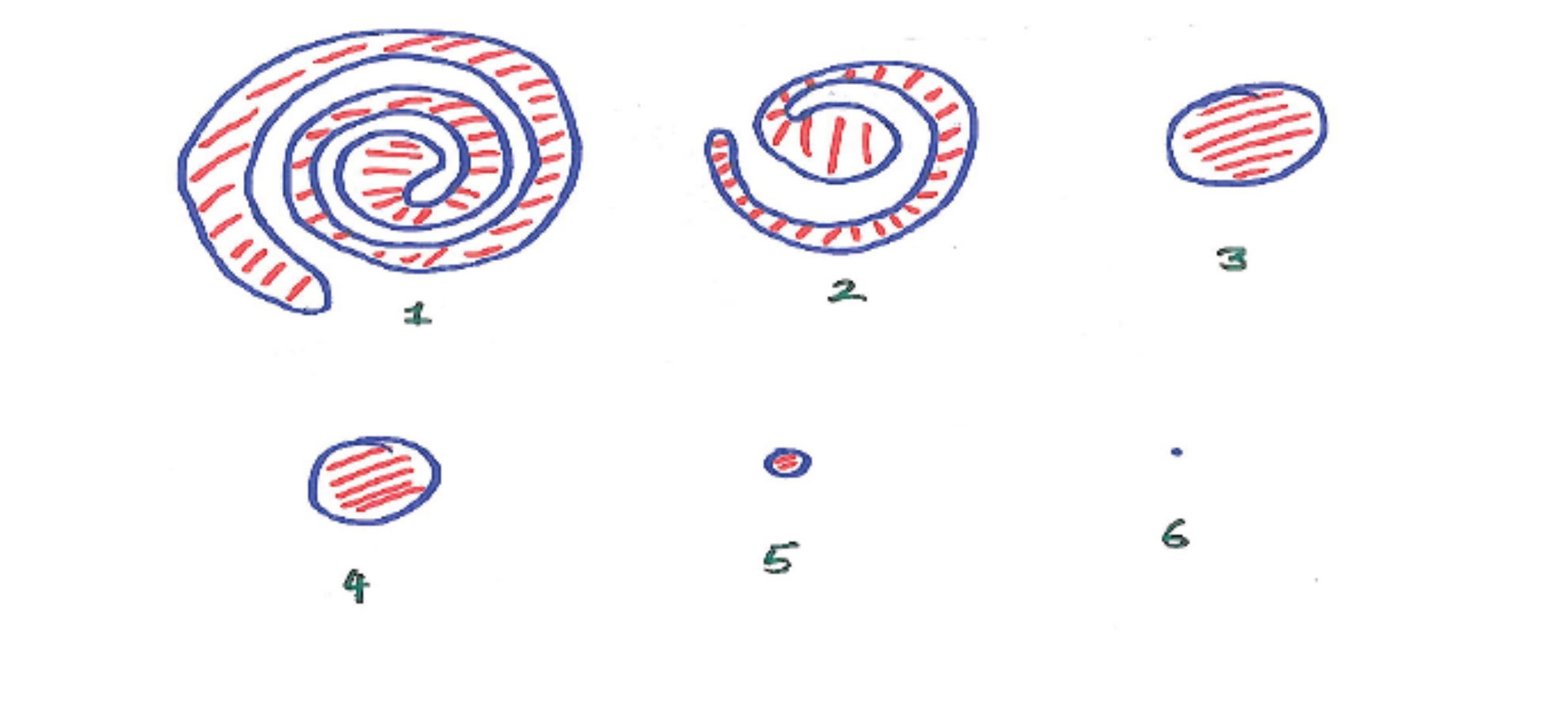}
\caption{The snake manages to unwind quickly enough to  become convex before extinction.}   
  \end{figure}

\subsection{The maximum principle}
An easy argument shows that the maximum principle holds for the mean curvature flow.  This has a number of consequences; among them the following key facts; see also Figure \ref{f:MaxPrinc}:
\begin{enumerate}
\item If two closed hypersurfaces are disjoint, then they remain disjoint under MCF.\label{e:item1}
\item If the initial hypersurface is embedded, then it remains embedded under MCF.  
\item If a closed hypersurface is convex, then it remains convex under MCF.
\item Likewise, mean convexity (i.e, $H\geq 0$) is preserved under MCF.
\end{enumerate}

\begin{figure}[htbp]
\centering\includegraphics[totalheight=.35\textheight, width=.95\textwidth]{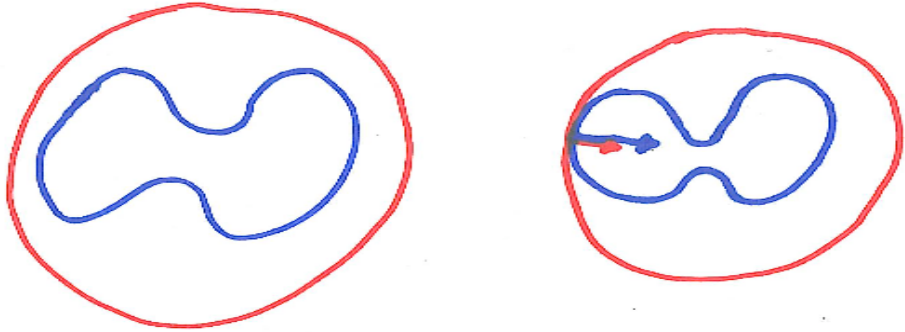}
\caption{By the maximum principle initially disjoint hypersurfaces remain disjoint under the flow.  To see this argue by contradiction and suppose not.  Look at the first time where they have contact.  At that time and point in space the inner evolves with greater speed, hence, right before they must have crossed, contradicting that it was the first time of contact.}   \label{f:MaxPrinc}
  \end{figure}

\vskip2mm
It follows from the avoidance property \eqr{e:item1} above that any closed hypersurface must  become extinct under the flow before the extinction of a large sphere containing the initial hypersurface.   For shrinking curves, Grayson proved that the singularities are trivial.  In higher dimensions, as we will see, the situation is much more complicated.  However, 
 one can define the flow through singularities. This was done by Sethian-Osher on the numerical side and Brakke, Evans-Spruck and Chen-Giga-Goto on the  theoretical side.   Obviously, as long as the flow stays smooth,  the evolving hypersurfaces are diffeomorphic.  Thus any topological change comes from singularities.   A major point of the rest of this paper is to discuss the singularities that typically occur.

In 1989, Grayson showed that his result for curves does not extend to surfaces.  In particular, he showed that 
a dumbbell with a sufficiently long and narrow bar will develop a pinching singularity before extinction.  A later proof was given by Angenent, \cite{A}, using the shrinking donut, that we will discuss shortly, and the avoidance property \eqr{e:item1}; see figure \ref{f:figAngenent} where Angenent's argument is explained.
Figures \ref{f:figdb1} to \ref{f:figdb2} show $8$ snapshots in time of the evolution of a dumbbell.{\footnote{The figures were created by computer simulation by U. Mayer.}}
 
\begin{figure}[htbp]
    \begin{minipage}[t]{0.5\textwidth}
    \centering\includegraphics[totalheight=.12\textheight, width=.95\textwidth]{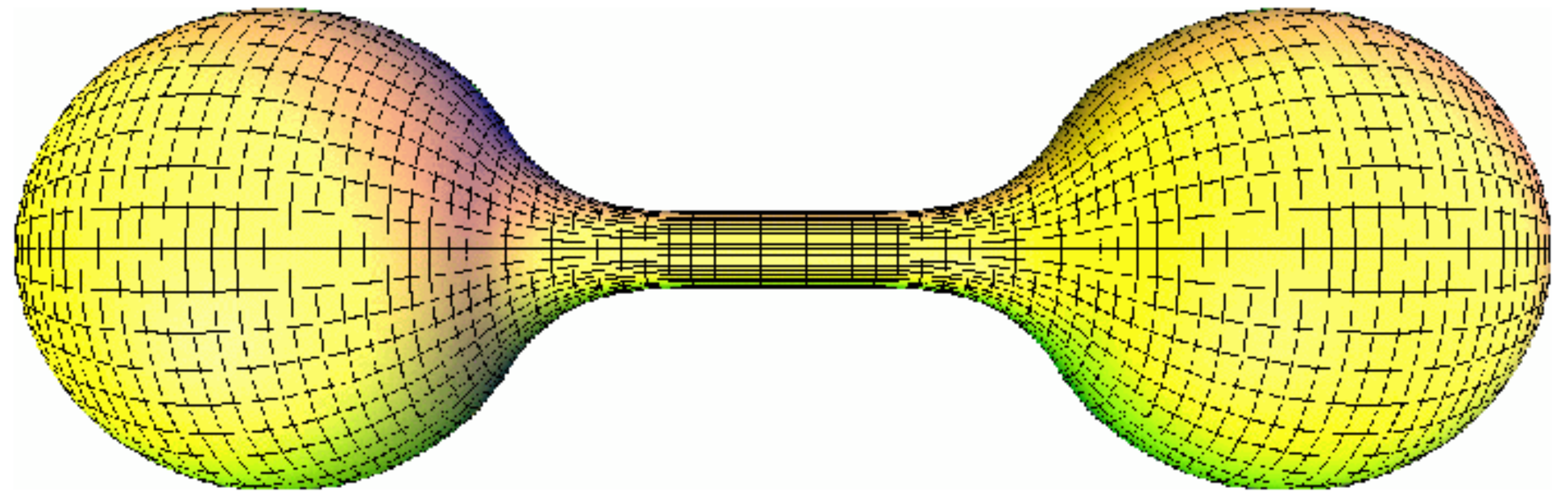}
    \end{minipage}\begin{minipage}[t]{0.5\textwidth}
    \centering\includegraphics[totalheight=.12\textheight, width=.95\textwidth]{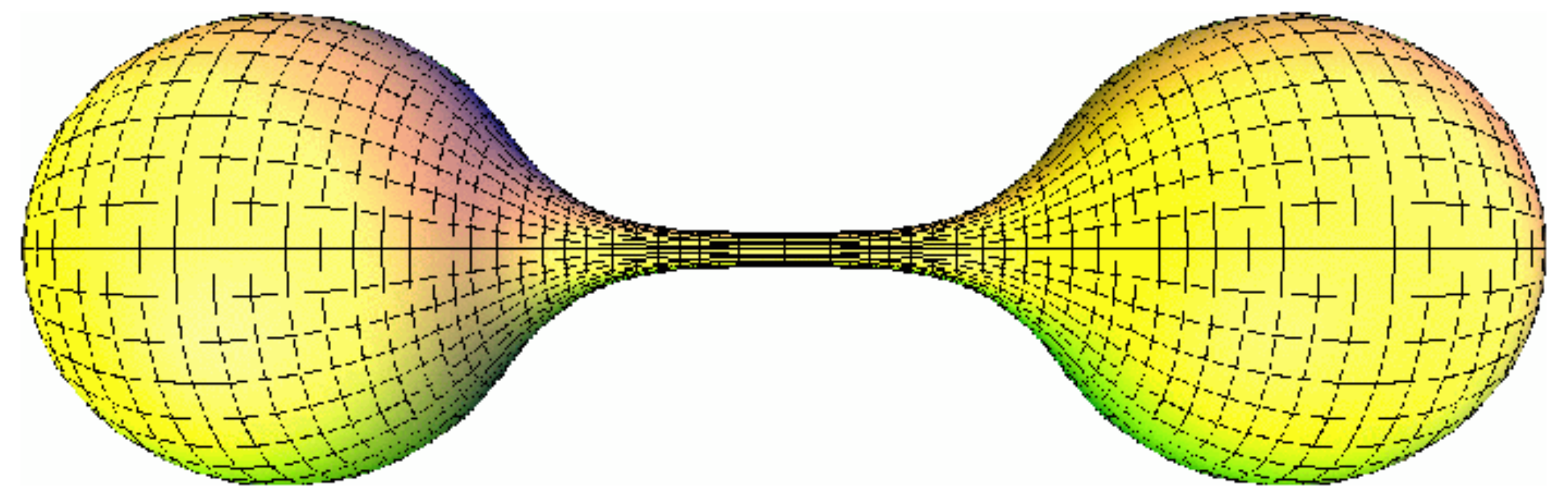}
    \end{minipage}
    \caption{Grayson's dumbbell; initial surface and step 1.}   \label{f:figdb1}
\end{figure}

\begin{figure}[htbp]
    \begin{minipage}[t]{0.5\textwidth}
    \centering\includegraphics[totalheight=.12\textheight, width=.95\textwidth]{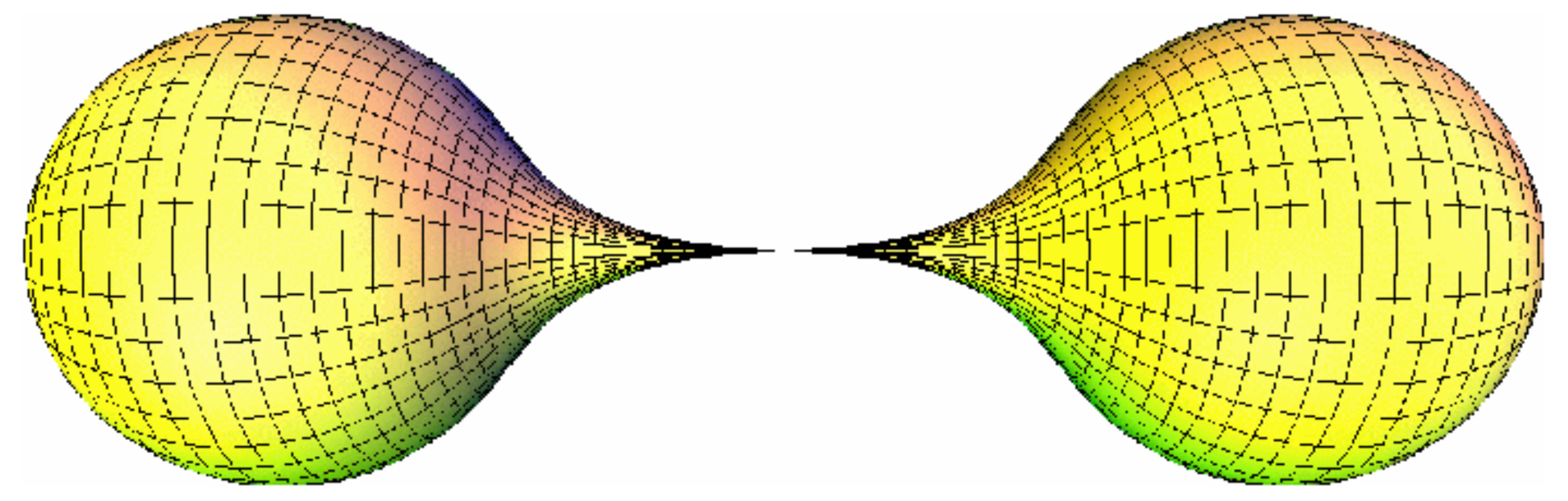}
    \end{minipage}\begin{minipage}[t]{0.5\textwidth}
    \centering\includegraphics[totalheight=.12\textheight, width=.95\textwidth]{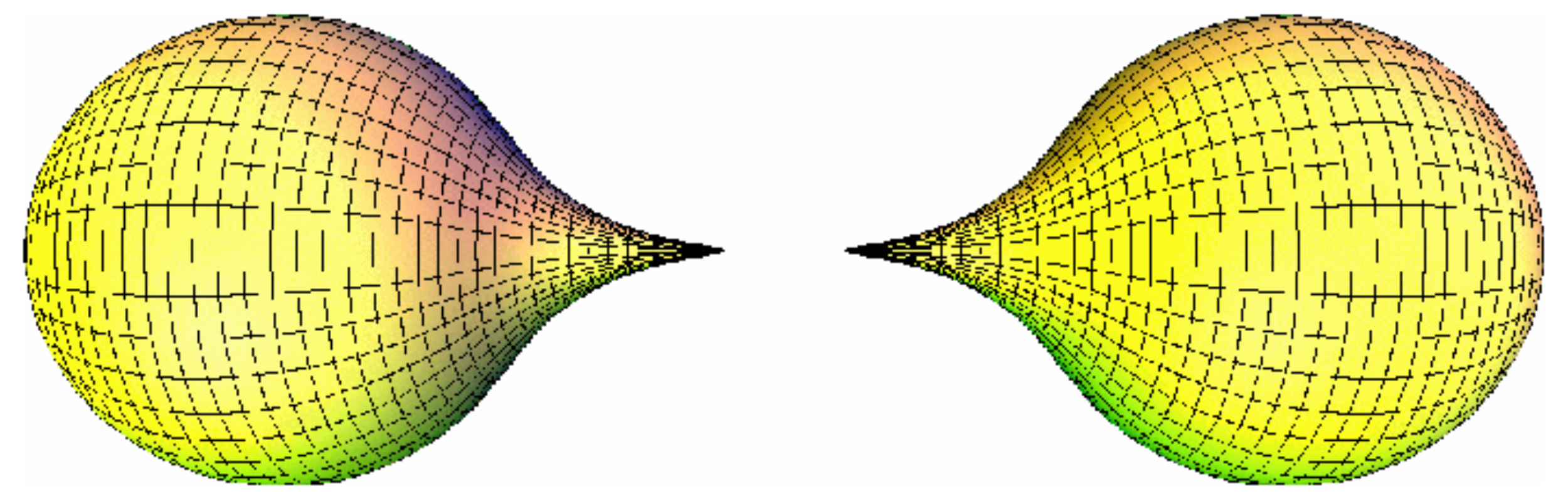}
    \end{minipage}
    \caption{The dumbbell; steps 2 and 3.}   
\end{figure}

\begin{figure}[htbp]
    \begin{minipage}[t]{0.5\textwidth}
    \centering\includegraphics[totalheight=.12\textheight, width=.95\textwidth]{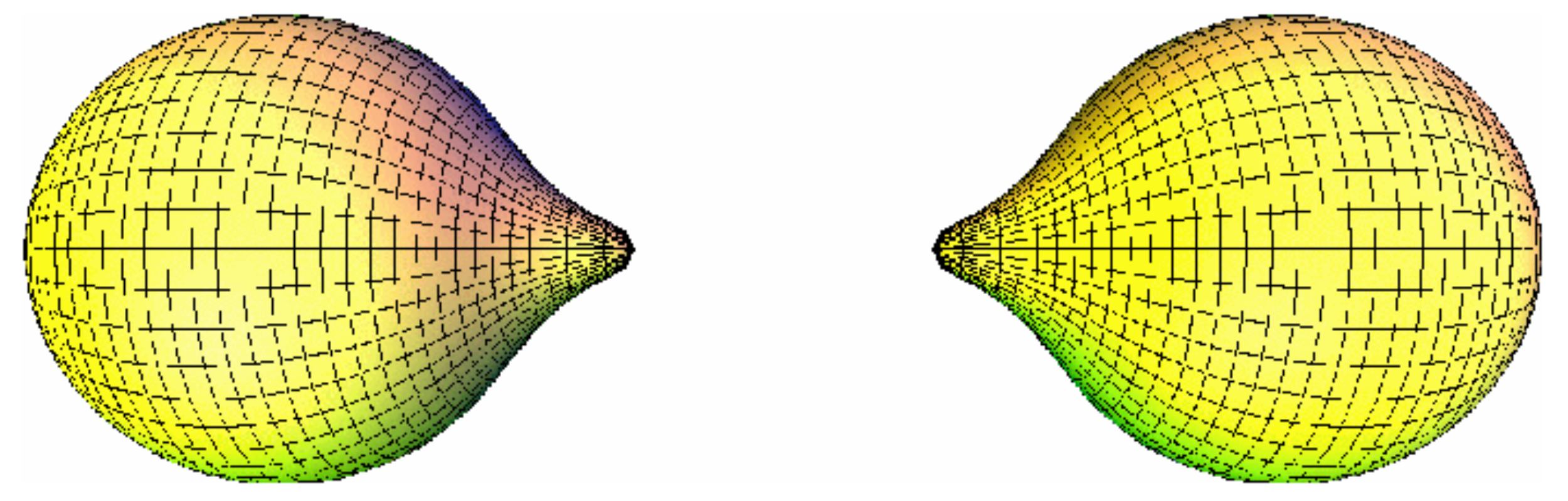}
    \end{minipage}\begin{minipage}[t]{0.5\textwidth}
    \centering\includegraphics[totalheight=.12\textheight, width=.95\textwidth]{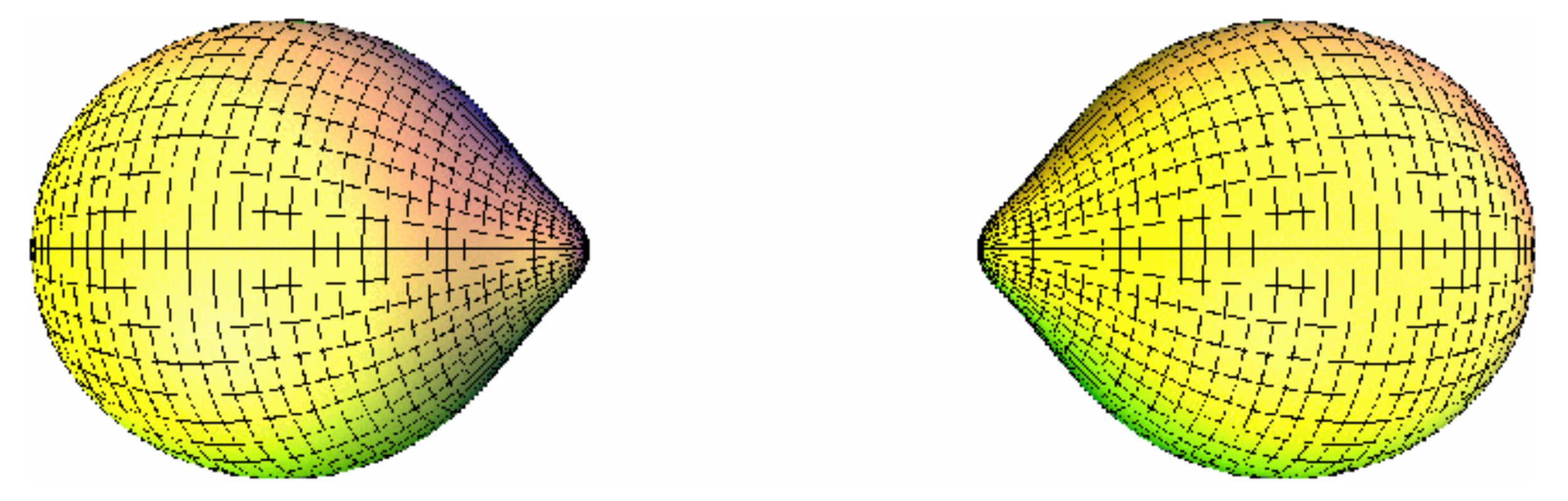}
    \end{minipage}
    \caption{The dumbbell; steps 4 and 5.}  
\end{figure}

\begin{figure}[htbp]
    \begin{minipage}[t]{0.5\textwidth}
    \centering\includegraphics[totalheight=.12\textheight, width=.95\textwidth]{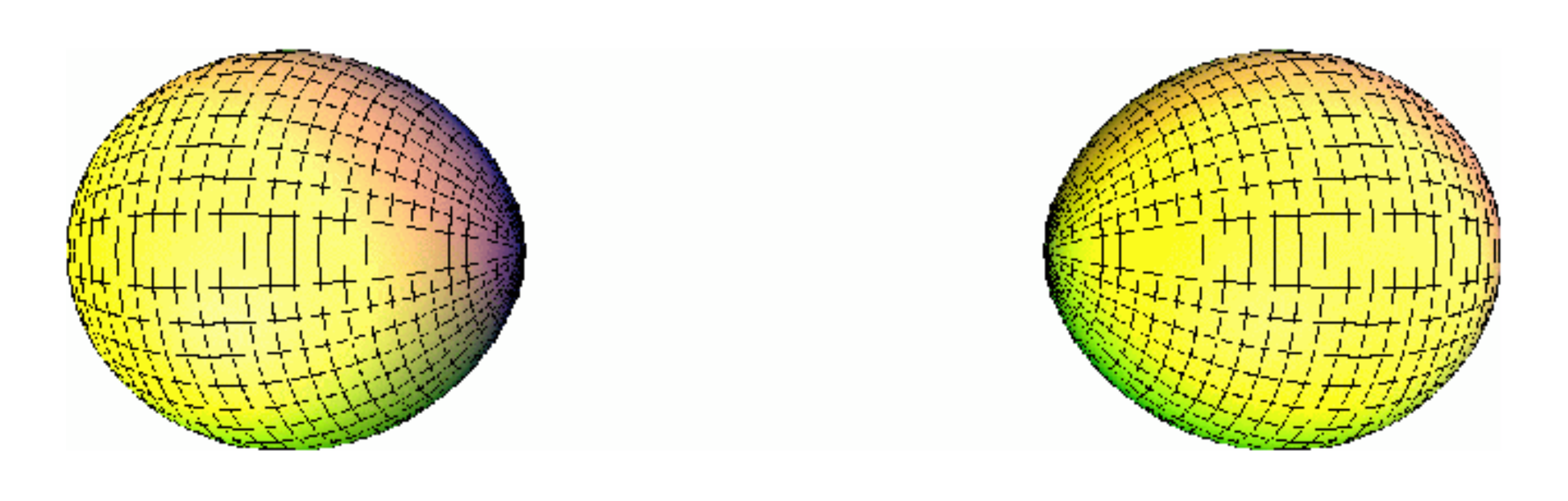}
    \end{minipage}\begin{minipage}[t]{0.5\textwidth}
    \centering\includegraphics[totalheight=.12\textheight, width=.95\textwidth]{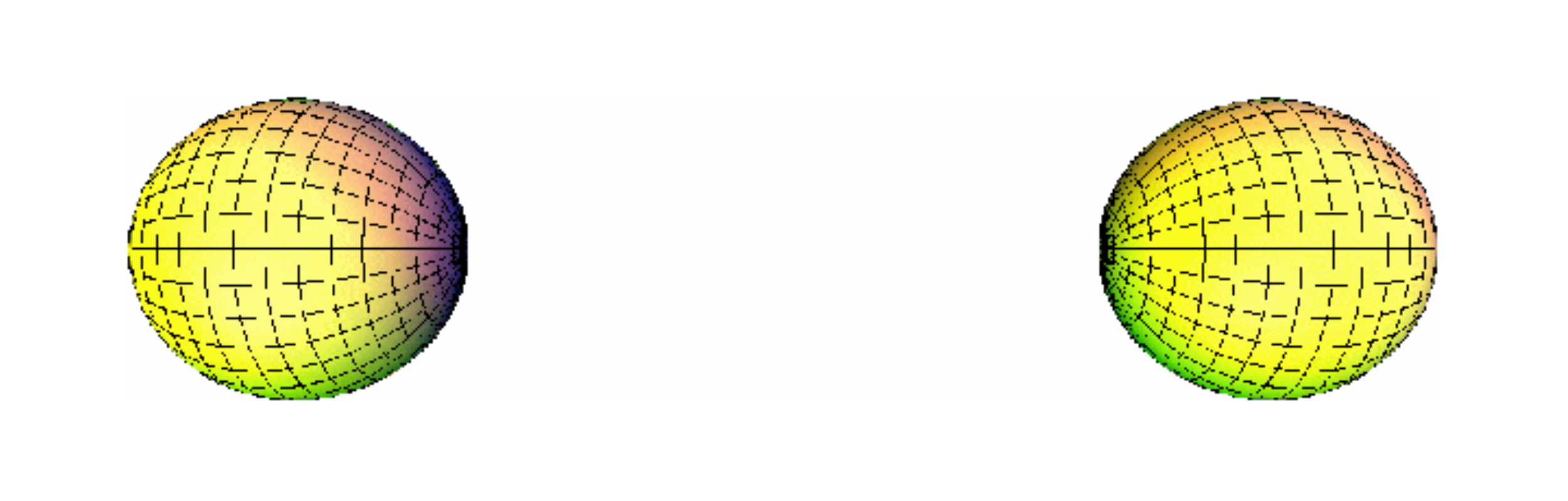}
    \end{minipage}
    \caption{The dumbbell; steps 6 and 7.}   \label{f:figdb2}
\end{figure}

\subsection{Self-similar shrinkers}

Recall that one of our interests here is in the topology of hypersurfaces.  The hope is that following the hypersurface as it evolves under the flow and understanding the changes that it goes through will give us information about the original hypersurface.  There are, however, hypersurfaces that only undergo trivial homothetic changes under the flow and, in particular, no improvement takes place.  Those are called self-similar shrinkers.  Examples are shrinking round spheres of radius $\sqrt{-2nt}$ where $t<0$ or the shrinking round cylinders $\SS^k\times \RR^{n-k}$ with radius $\sqrt{-2kt}$.  In both cases, the only change under the flow is a homothety. 

More precisely, let $M_t \subset \RR^{n+1}$ be a  one-parameter family of 
hypersurfaces flowing by MCF for $t< 0$.  We say that 
$M_t$ is a self-similar shrinker if $M_t = \sqrt{-t} \, M_{-1}$ for all $t< 0$.  Here, and in what follows, when $S$ is a subset of $\RR^{n+1}$ and $\lambda>0$ is a positive constant, then we let $\lambda\,S$ be the set $\{\lambda\,s\,|\,s\in S\}$ where the whole Euclidean space has been scaled by the factor $\lambda$.  

Note that   the self-similar shrinker becomes extinct at the origin in space at time $t=0$.  Since translations in space-time are isometries, we could equally well have considered surfaces that under the mean curvature flow evolve by homothety centered at a different point in space-time and become extinct at a different time and different point in space.  We will also refer to those as self-similar shrinkers and the equation for those is $M_t=\sqrt{t_0-t}\left(M_{t_0-1}-x_0\right)+x_0$.  However, when we consider a single self-similar solution, we will often assume for simplicity that it becomes extinct at the origin in time and space.  

In addition to shrinking round spheres and cylinders, then in 1992, 
 Angenent, \cite{A}, constructed by ODE methods a self-similar shrinking donut in $\RR^3$ together with similar higher dimensional examples.   Angenent's example was given by rotating a simple closed curve in the plane around an axis and thus had the topology of a torus.  In fact, numerical evidence suggests that, unlike for the case of curves,  a complete classification of self-shrinkers 
is impossible in higher dimensions as the examples appear to be so plentiful and varied; see for instance Chopp, \cite{Ch}, and Ilmanen, \cite{I2}, for numerical examples and the very recent rigorously constructed examples by gluing methods by  Kapouleas, Kleene, and M\o ller in \cite{KKM} and Nguyen in \cite{Nu}.

 \begin{figure}[htbp]
\centering\includegraphics[totalheight=.4\textheight, width=.75\textwidth]{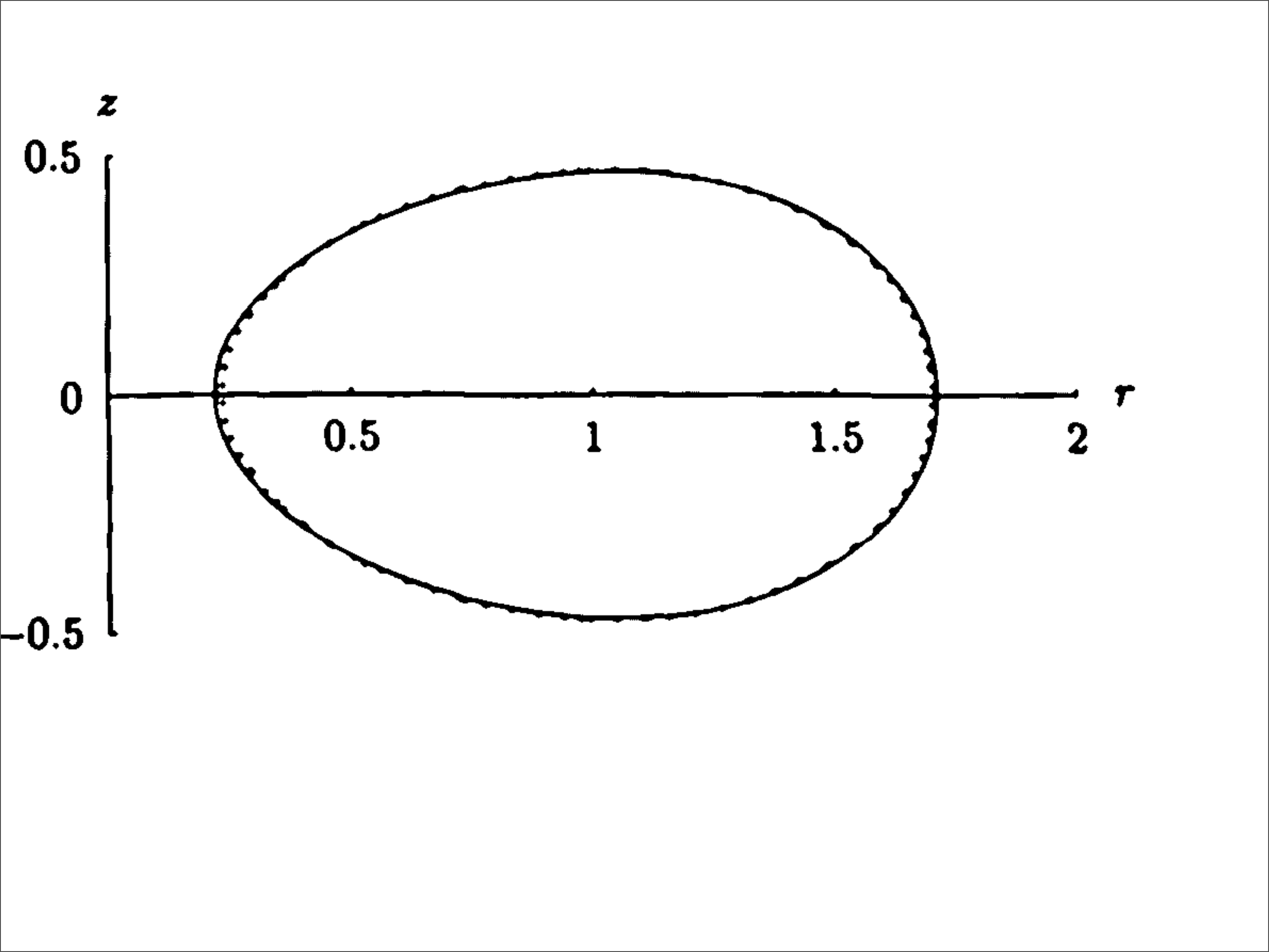}
\caption{Angenent's shrinking donut from numerical simulations of D. Chopp, \cite{Ch}.  The vertical $z$-axis is the axis of rotation
and the horizontal $r$-axis is a line of reflection symmetry.}   
  \end{figure}
  
  \begin{figure}[htbp]
 Ê Ê
\centering\includegraphics[totalheight=.4\textheight, width=.75\textwidth]{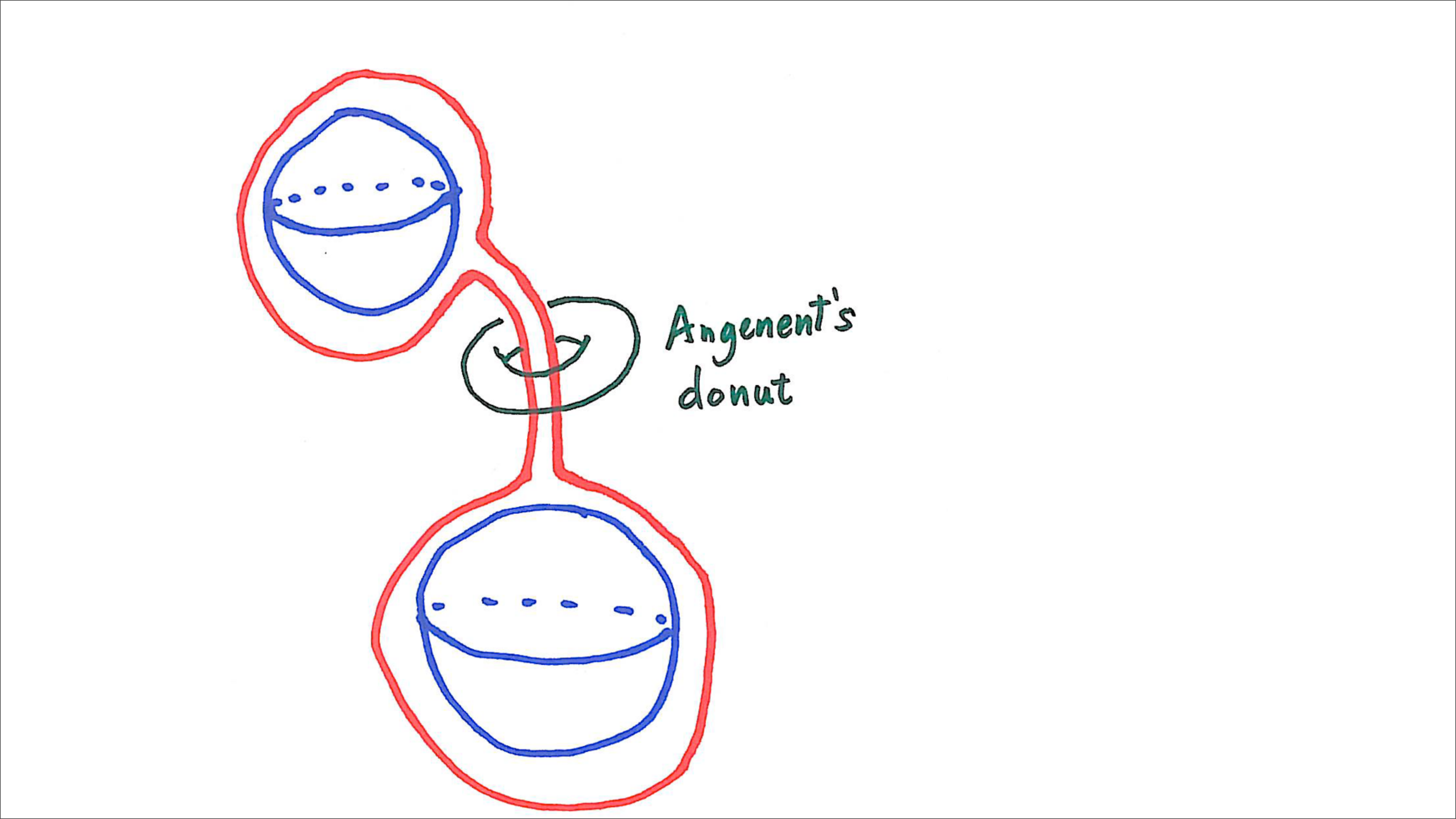}
\caption{Angenent's proof for why the neck of the dumbbell pinches off before the bells become extinct:  Enclose the neck with a small self-shrinking donut and place two round spheres inside the bells.  By the maxmum principle or avoidance property these four surfaces stay disjoint under the flow.  Since the donut becomes extinct before the two round spheres it follows that the neck pinches off before the bells become extinct.} Ê \label{f:figAngenent}
 Ê\end{figure}
 
  \begin{figure}[htbp]
\centering\includegraphics[totalheight=.4\textheight, width=.75\textwidth]{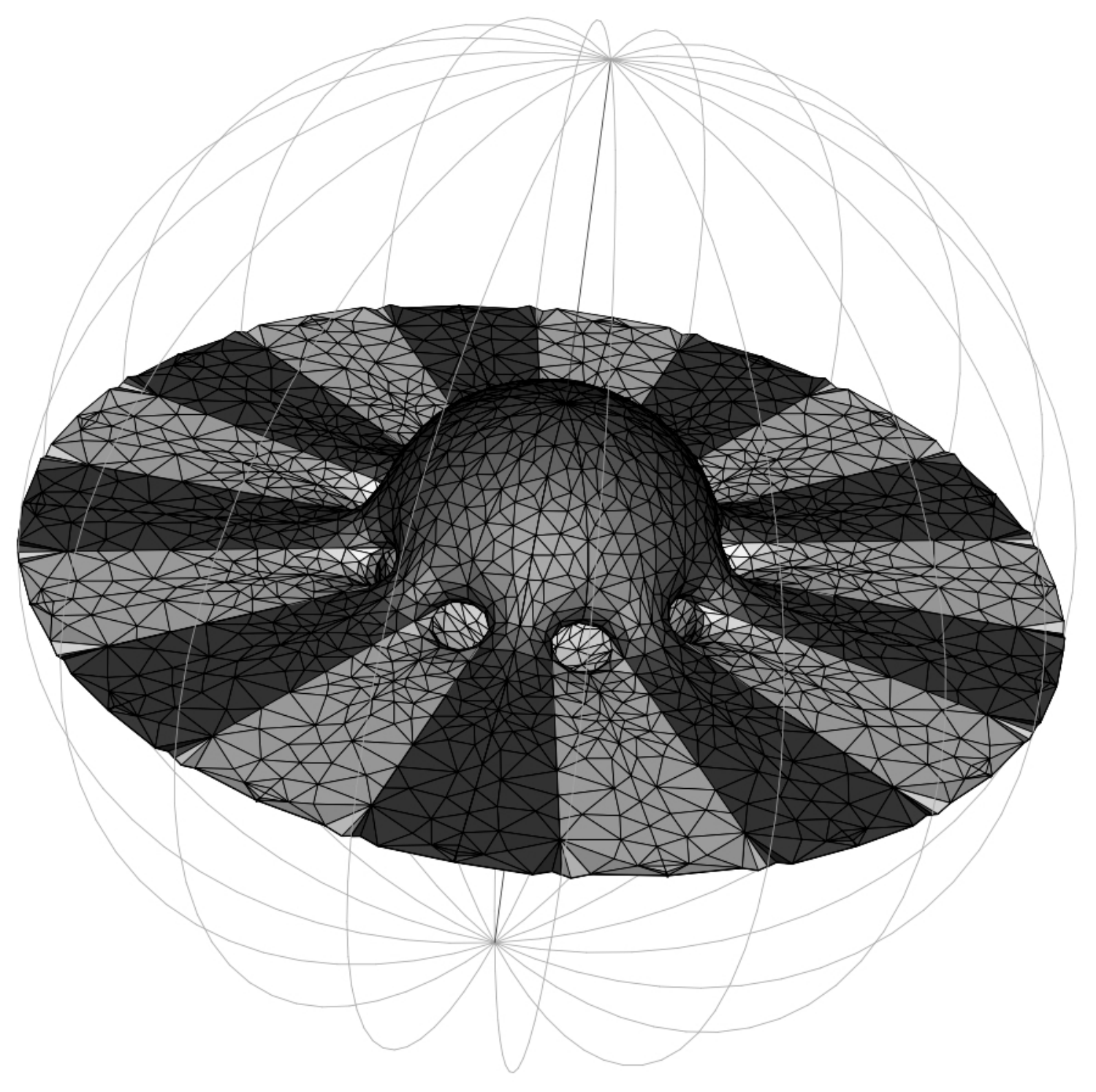}
\caption{The shrinker of genus 8
with 18 Scherk handles shown to exist by Kapouleas, Kleene, and M\o ller in \cite{KKM}, see also Nguyen, \cite{Nu}, for a similar shrinker.  Its existence had been conjectured by Ilmanen in \cite{I2}, where this picture is from.}   
  \end{figure}
 
\subsection{The self-shrinker equation}
An easy computation shows that a MCF $M_t$ is self-similar is equivalent to that
$M=M_{-1}$ satisfies the equation{\footnote{This equation differs by a factor of two from Huisken's definition of a self-shrinker; this is because Huisken works with the time $-1/2$ slice.}}
\begin{equation}  
H=\frac{\langle x,\nn \rangle}{2}\, .   \notag
\end{equation}
That is:  $M_t=\sqrt{-t}M_{-1}$  $\iff$  $M_{-1}$ satisfies $H=\frac{\langle x,\nn \rangle}{2}$. 

\begin{figure}[htbp]
    \begin{minipage}[t]{0.5\textwidth}
    \centering\includegraphics[totalheight=.3\textheight, width=1\textwidth]{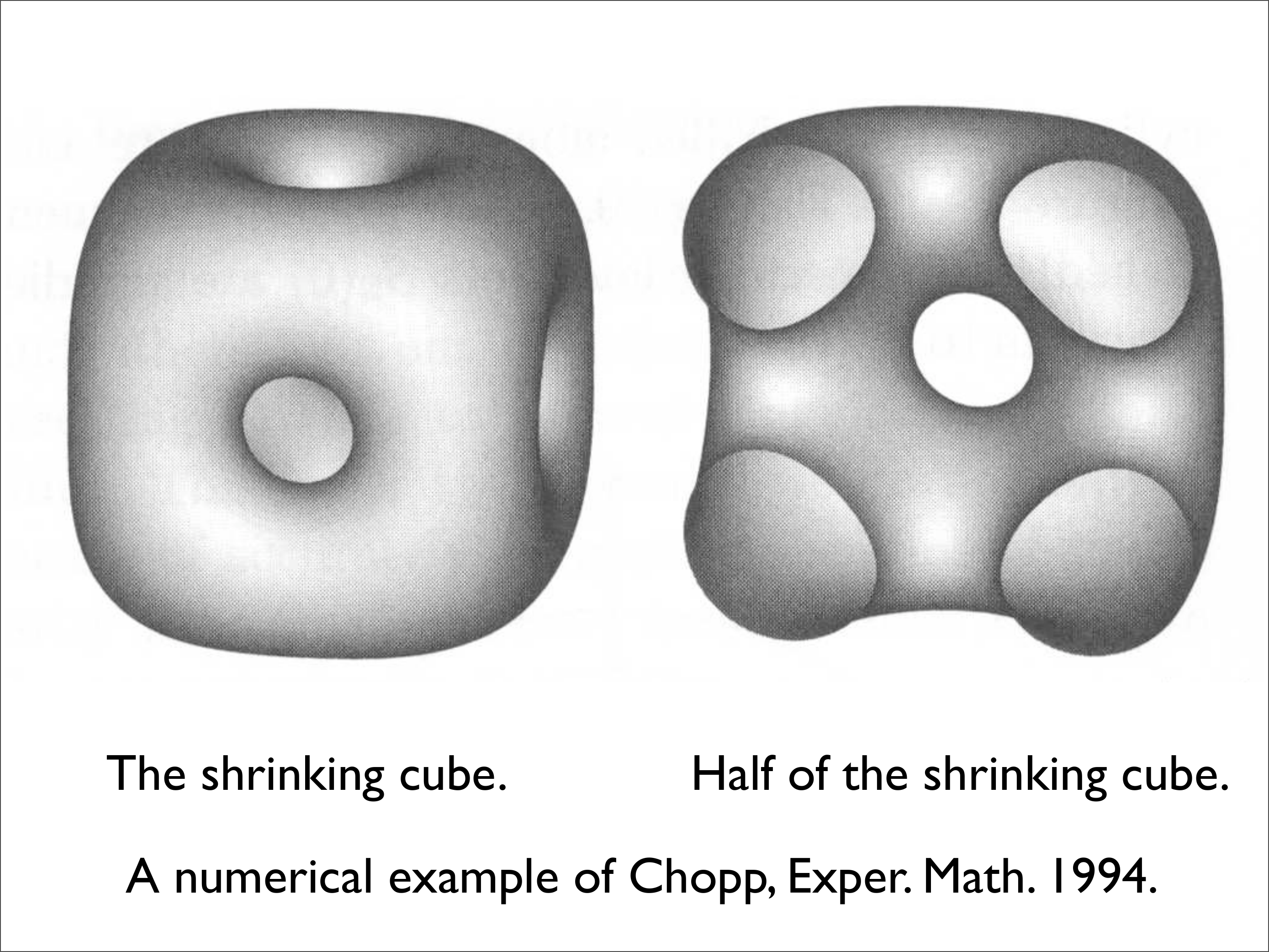}
    \caption{A closed numerical example of Chopp, \cite{Ch}.}  
    \end{minipage}\begin{minipage}[t]{0.5\textwidth}
    \centering\includegraphics[totalheight=.3\textheight, width=1\textwidth]{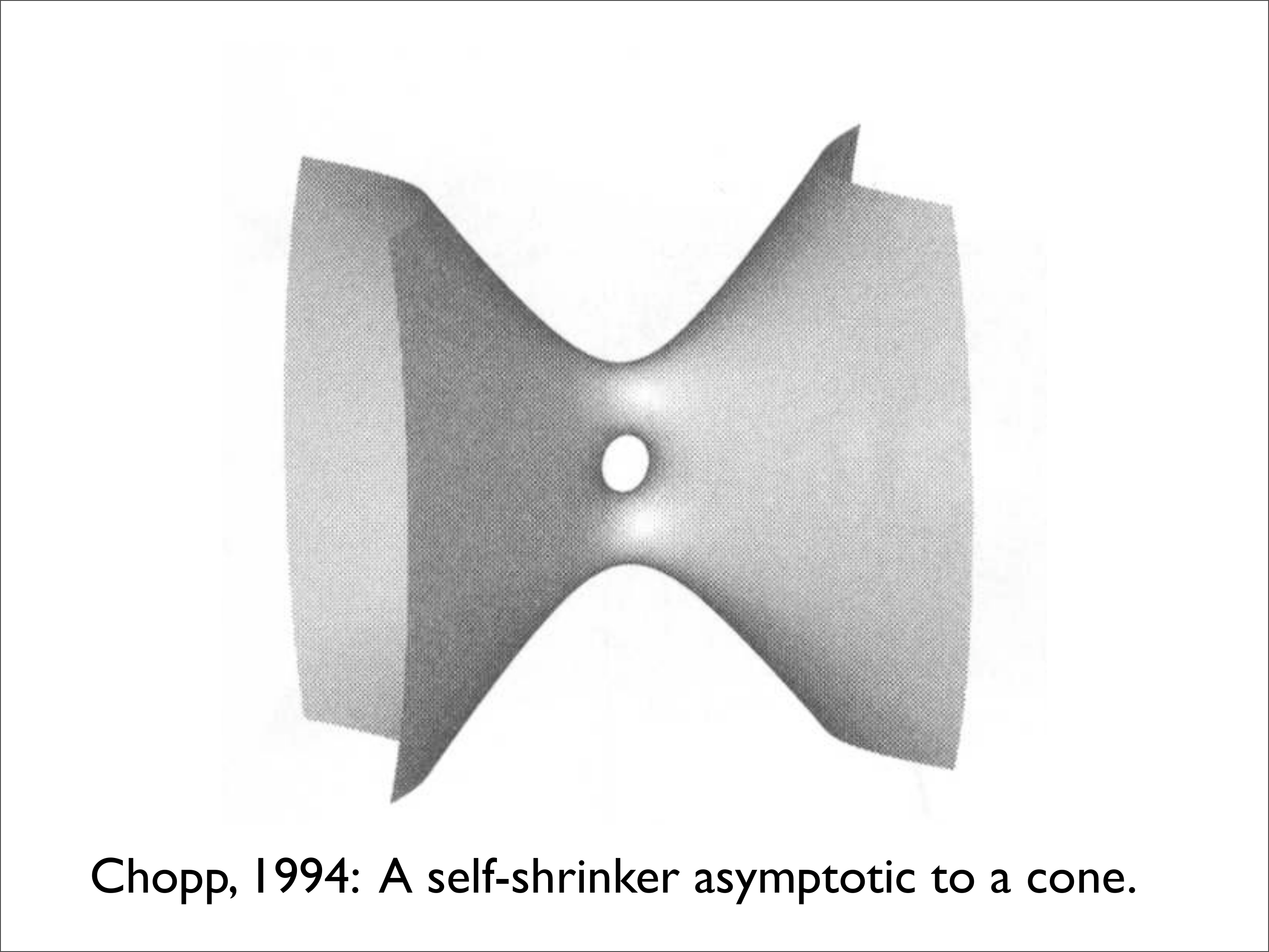}
    \caption{A non-compact numerical example of Chopp, \cite{Ch}.} 
    \end{minipage}
\end{figure}

The self-shrinker equation arises variationally in two closely related ways: as minimal surfaces for a conformally changed metric and as critical points for a weighted area functional.  We return to the second later, but state the first now:

\begin{Lem}
  $M$ is a self-shrinker
 $\iff$
 $M$ is a minimal surface in the metric 
 \begin{equation}
 	g_{ij}=\e^{-\frac{|x|^2}{2n}}\,\delta_{ij} \, .  \notag
\end{equation}
\end{Lem}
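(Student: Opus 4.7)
The plan is to invoke the standard formula describing how the mean curvature of a hypersurface transforms under a conformal change of the ambient metric. If $(N,g)$ is a Riemannian manifold, $\tilde g = e^{2\phi}g$ is a conformally changed metric, and $\Sigma \subset N$ is an oriented hypersurface with $g$-unit normal $\nn$ and mean curvature $H$ (trace of the second fundamental form, with our sign convention), then the mean curvature $\tilde H$ with respect to $\tilde g$ is
\begin{equation}
\tilde H = e^{-\phi}\bigl(H + n\,\langle \nabla\phi, \nn\rangle\bigr), \notag
\end{equation}
where $\nabla\phi$ is the $g$-gradient. This is a classical identity, obtained by writing the Levi-Civita connection of $\tilde g$ in terms of that of $g$, reading off how the second fundamental form changes, and tracing along $\Sigma$ (together with the fact that the $\tilde g$-unit normal is $e^{-\phi}\nn$).

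Granted that formula, the lemma is immediate by substitution. Write $\tilde g_{ij} = e^{-|x|^2/(2n)}\delta_{ij} = e^{2\phi}\delta_{ij}$ with $\phi = -|x|^2/(4n)$, so that the Euclidean gradient is $\nabla\phi = -x/(2n)$ and hence $\langle \nabla\phi, \nn\rangle = -\langle x,\nn\rangle/(2n)$. Plugging in gives
\begin{equation}
\tilde H = e^{-\phi}\left(H - \tfrac{1}{2}\langle x,\nn\rangle\right), \notag
\end{equation}
so $\tilde H \equiv 0$ on $M$ if and only if $H = \langle x,\nn\rangle/2$ on $M$, which is precisely the self-shrinker equation displayed just before the lemma.

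A sanity check that the sign conventions line up: on the round sphere of radius $R$ centered at the origin, $H = n/R$ and $\langle x,\nn\rangle = R$, so $H = \langle x,\nn\rangle/2$ forces $R = \sqrt{2n}$, consistent with the earlier statement that the shrinking sphere has radius $\sqrt{-2nt}$ at time $t<0$. As an alternative route (which will also tie into the promised ``weighted area functional'' viewpoint), one may observe that the $\tilde g$-area element equals $e^{n\phi}\,dA = e^{-|x|^2/4}\,dA$, so the $\tilde g$-minimal surface equation is literally the Euler--Lagrange equation of Huisken's $F$-functional $\int_M e^{-|x|^2/4}$, whose critical points satisfy $H = \langle x,\nn\rangle/2$ by a direct first variation computation.

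The only nontrivial step is verifying the conformal transformation formula for $H$; after that, the lemma reduces to a one-line algebraic substitution. The main care required is with sign conventions — whether $H$ denotes the trace or negative trace of the second fundamental form, and consistency of the choice of normal — but these are already fixed earlier in the paper ($H = n/R$ on the sphere of radius $R$), which matches the computation above.
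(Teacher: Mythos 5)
Your proposal is correct. The paper itself gives only a one-line proof (``The proof follows immediately from the first variation''), which is the weighted-area route you describe as your ``alternative'': the area element for $\tilde g = e^{2\phi}\delta_{ij}$ restricted to an $n$-dimensional hypersurface is $e^{n\phi}\,dA = e^{-|x|^2/4}\,dA$, so $\tilde g$-minimality is precisely criticality of Huisken's Gaussian-weighted area functional, whose Euler--Lagrange equation is the shrinker equation $H = \langle x,\nn\rangle/2$. Your primary argument instead computes how the mean curvature itself transforms under the conformal change, $\tilde H = e^{-\phi}\bigl(H + n\,\langle\nabla\phi,\nn\rangle\bigr)$, and substitutes $\phi = -|x|^2/(4n)$. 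The two routes are of course equivalent, but they emphasize different things: the conformal identity gives a purely pointwise, computational proof that needs no variational input, while the first-variation route makes transparent (as the paper promises to do in the following subsection) that self-shrinkers are the critical points of the $F$-functional, which is what drives the entropy and stability theory that follows. Your sign check against $H=n/R$ on the sphere and the resulting radius $\sqrt{2n}$ is exactly the right consistency test and matches the paper's conventions.
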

 
The proof follows immediately from the first variation.
Unfortunately, this metric on $\RR^{n+1}$ is not complete (the distance to infinity is finite) and
 the curvature blows up exponentially.

\subsection{Huisken's theorem about MCF of convex hypersurfaces}

In 1984, Huisken, \cite{H1}, showed that convexity is preserved under MCF and showed that closed convex hypersurfaces become round:

\begin{Thm}
(Huisken, \cite{H1})  Under MCF, every closed convex hypersurface in $\RR^{n+1}$  remains convex and eventually becomes extinct in a ``round point''.
\end{Thm}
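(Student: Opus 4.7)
The approach would be Huisken's original parabolic maximum-principle argument: preserve convexity by a tensor maximum principle, establish a pinching estimate for the second fundamental form, and then rescale parabolically at the extinction time to extract a round limit.

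First I would derive the evolution equations induced by MCF. Writing $A=(h_{ij})$ for the second fundamental form and $|A|^2=h_{ij}h^{ij}$, a standard computation gives
\begin{equation}
\partial_t H = \Delta H + |A|^2 H, \qquad \partial_t h_{ij} = \Delta h_{ij} - 2H\, h_{ik} g^{kl} h_{lj} + |A|^2 h_{ij}\, , \notag
\end{equation}
together with $\partial_t g_{ij} = -2Hh_{ij}$ and $\partial_t |A|^2 = \Delta |A|^2 - 2|\nabla A|^2 + 2|A|^4$. The equation for $H$ and compactness of $M_0$ show that $H_{\min}(t)$ is nondecreasing and strictly positive (a closed convex hypersurface has $H>0$). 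Applying Hamilton's tensor maximum principle to the $h_{ij}$ equation shows that nonnegativity of $A$ is preserved: $M_t$ stays convex throughout the flow. The avoidance principle already guarantees finite-time extinction since a large enclosing sphere goes extinct in finite time.

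Next comes the key pinching estimate. Let $\mathring{A} = A - \tfrac{H}{n}g$ be the traceless second fundamental form; by Cauchy--Schwarz, $|\mathring{A}|^2\ge 0$ with equality exactly at umbilic points. A computation yields an evolution equation for $f:=|\mathring{A}|^2/H^2$ whose reaction terms are nonpositive once convexity is invoked, and whose gradient terms are controlled via the Kato-type inequality $|\nabla A|^2 \geq \tfrac{3}{n+2}|\nabla H|^2$. The maximum principle then gives that $f$ is bounded by its initial maximum. A Stampacchia/iteration refinement upgrades this to the quantitative decay $|\mathring{A}|^2 \leq C\, H^{2-\sigma}$ for some $\sigma>0$ at points of large curvature, which is what is needed for smooth convergence later.

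With the pinching in hand I would rescale parabolically. Let $T$ be the extinction time and $p$ the (unique) extinction point; set $\tilde{M}_\tau = (T-t)^{-1/2}(M_t - p)$ with new time $\tau = -\tfrac{1}{2}\log(T-t)$. The pinching estimate together with interior parabolic regularity for $|A|^2$ and its covariant derivatives yields uniform $C^\infty$ bounds for $\tilde M_\tau$. A subsequential smooth limit $M_\infty$ is a closed convex hypersurface satisfying the self-shrinker equation $H=\langle x,\nn\rangle/2$ with $|\mathring{A}|\equiv 0$, hence totally umbilic; the only such closed hypersurface in $\RR^{n+1}$ is a round $\SS^n$. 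Upgrading from subsequential to full convergence, one concludes that $\tilde{M}_\tau\to \SS^n$ smoothly as $\tau\to\infty$, i.e.\ $M_t$ becomes extinct in a round point. The main obstacle is the pinching step: the reaction terms in the equation for $|\mathring{A}|^2/H^2$ have the wrong sign in general, and taming them requires combining convexity with the Kato inequality and a careful iteration to get quantitative decay; once this is in place, the remaining steps are standard parabolic bootstrapping plus the classification of closed totally umbilic hypersurfaces.
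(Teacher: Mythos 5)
Your outline is a faithful summary of Huisken's original argument in \cite{H1} (evolution equations, tensor maximum principle for convexity, the $|\mathring{A}|^2 \leq C H^{2-\sigma}$ pinching via Stampacchia iteration with the Kato-type inequality $|\nabla A|^2 \geq \tfrac{3}{n+2}|\nabla H|^2$, then parabolic rescaling to a totally umbilic limit); the paper states the theorem and cites \cite{H1} without reproducing the proof, so this is exactly the intended route. One caveat the paper itself points out and you should make explicit: the final step (``totally umbilic closed hypersurface $\Rightarrow$ round sphere'') requires $n\geq 2$, so this argument does not recover the Gage--Hamilton result for $n=1$.
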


 This is exactly analogous to the result of 
Gage-Hamilton for convex curves and was proven two years earlier.  However, Huisken's proof only works for $n>1$ as he shows that the hypersurfaces become closer and closer to being umbilic and that the limiting shapes are umbilic.  A hypersurface is umbilic if all of the eigenvalues of the second fundamental form are the same; this characterizes the sphere when there are at least two eigenvalues, but is meaningless for curves.

We will discuss generic singularities later, but want to point out here in the context of Huisken's and Gage-Hamilton's theorems that, as a consequence of the classification of the generic singularities together with a smooth compactness theorem for self-shrinkers (see \cite{CM1} and \cite{CM4}),
one gets that a 
 generic mean curvature flow in $\RR^3$  that disappears in a compact point does so in
a round point.

\vskip2mm
Using the maximum principle, one can show that various types of convexity are preserved under MCF.  Convexity means that every eigenvalue of $A$ has the same sign which, by necessity, has to be positive for a closed hypersurface.  There are weaker conditions that are also preserved under mean curvature flow and where significant results have been obtained.  Notably mean convexity, where a lot of interesting results have been obtained by Huisken-Sinestrari and White, \cite{HS1}, \cite{W}.  They have, in particular, studied the singularities that the flow develops.  A good way of thinking of mean convexity is that the flow moves monotone inward.  In another direction, Huisken-Sinestrari, \cite{HS2}, have developed mean curvature flow with surgery assuming that the initial hypersurface is $2$-mean convex.  $2$-mean convex means that the sum of any two principal curvatures is positive.

 \section{Singularities for MCF}

We will  now leave convex hypersurfaces and go to the general case.  As Grayson's dumbbell showed, there is no higher dimensional analog of his theorem for curves.
The key for analyzing singularities is a blow up (or rescaling) analysis, based on two ingredients:  Monotonicity and rescaling. Rescaling allows one to magnify around a singularity by blowing up the flow to obtain a new flow that models the given singularity.  The second ingredient is a monotonicity formula that guarantees that the blow up or rescaled flow becomes simpler.  In fact,  we will see next that the limit of the rescaled flows is self-similar. 
  
\subsection{Huisken's monotonicity}

Let  $\Phi$ be the non-negative function on $\RR^{n+1} \times (-\infty,0)$ defined by
 \begin{equation}
\Phi  (x,t)  = [-4\pi t]^{-\frac{n}{2}}
\,\e^{\frac{|x|^2}{4t}}\, ,
\end{equation}
and set
$\Phi_{(x_0,t_0)} (x,t)= \Phi (x-x_0,t-t_0)$.  So $\Phi_{(x_0,t_0)}$ is obtained from $\Phi$ by  translating in space-time.  In 1990, G. Huisken proved the following monotonicity formula for mean curvature flow, \cite{H2}, \cite{E}:

\begin{Thm}	\label{t:huiskenmon}
(Huisken, \cite{H2})
  If $M_t$ is a solution to
the MCF, then
\begin{equation}    \label{e:huisken2}
\frac{d}{dt} \int_{M_t} \Phi_{(x_0,t_0)}=
-\int_{M_t} \left| H\nn- \frac{(x-x_0)^{\perp}
}{2\,(t_0-t)}\right|^2 \,\Phi_{(x_0,t_0)}  \, .
\end{equation}
\end{Thm}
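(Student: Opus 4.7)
By the translation invariance of MCF and the definition $\Phi_{(x_0,t_0)}(x,t) = \Phi(x-x_0,t-t_0)$, I would reduce at once to the case $(x_0,t_0) = (0,0)$, so that $\Phi(x,t) = (-4\pi t)^{-n/2}\,e^{|x|^2/(4t)}$ is defined for $t < 0$ and the claimed identity reads
\begin{equation}
\frac{d}{dt}\int_{M_t}\Phi = -\int_{M_t}\Bigl|H\nn + \frac{x^\perp}{2t}\Bigr|^2\Phi\,. \notag
\end{equation}
The basic tool is the general variation formula: for any smooth $f(x,t)$ on space-time and any flow with velocity $V$ along MCF, $V = -H\nn$, the fact that $\frac{d}{dt}d\mu = -H^2\,d\mu$ (a direct consequence of the first variation of volume in the direction $-H\nn$) gives
\begin{equation}
\frac{d}{dt}\int_{M_t} f = \int_{M_t}\bigl(\partial_t f - H\langle\nabla f,\nn\rangle - H^2 f\bigr)\,d\mu\,. \notag
\end{equation}

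Next I would apply this to $f = \Phi$. A direct calculation from $\log\Phi = -\frac{n}{2}\log(-4\pi t) + \frac{|x|^2}{4t}$ gives
\begin{equation}
\frac{\partial_t \Phi}{\Phi} = -\frac{n}{2t} - \frac{|x|^2}{4t^2}, \qquad \frac{\nabla\Phi}{\Phi} = \frac{x}{2t}\,, \notag
\end{equation}
so that the integrand equals
\begin{equation}
\Bigl(-\frac{n}{2t} - \frac{|x|^2}{4t^2} - \frac{H\langle x,\nn\rangle}{2t} - H^2\Bigr)\Phi\,. \notag
\end{equation}
On the other hand, since $x^\perp = \langle x,\nn\rangle\nn$, one has the algebraic identity
\begin{equation}
-\Bigl|H\nn + \frac{x^\perp}{2t}\Bigr|^2 = -H^2 - \frac{H\langle x,\nn\rangle}{t} - \frac{\langle x,\nn\rangle^2}{4t^2}\,. \notag
\end{equation}
Subtracting, the difference between the pointwise integrand from the first display and the desired integrand from the second display collapses (using $|x|^2 - \langle x,\nn\rangle^2 = |x^T|^2$) to
\begin{equation}
\Bigl(-\frac{n}{2t} - \frac{|x^T|^2}{4t^2} + \frac{H\langle x,\nn\rangle}{2t}\Bigr)\Phi\,. \notag
\end{equation}

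The main (and only real) step is to show that this expression integrates to zero on $M_t$, and this is where integration by parts enters. I would use the identity $\mathrm{div}_M(x^T) = n - H\langle x,\nn\rangle$, which follows from $\mathrm{div}_M x = n$ together with $\mathrm{div}_M(\langle x,\nn\rangle\nn) = H\langle x,\nn\rangle$. Then $H\langle x,\nn\rangle/(2t) - n/(2t) = -\mathrm{div}_M(x^T)/(2t)$, and since $\nabla_M\Phi = \Phi\,x^T/(2t)$, the divergence theorem on the closed hypersurface $M_t$ gives
\begin{equation}
\int_{M_t}\frac{-\mathrm{div}_M(x^T)}{2t}\Phi\,d\mu = \int_{M_t}\frac{\langle x^T,\nabla_M\Phi\rangle}{2t}\,d\mu = \int_{M_t}\frac{|x^T|^2}{4t^2}\Phi\,d\mu\,, \notag
\end{equation}
which exactly cancels the remaining $-|x^T|^2/(4t^2)$ term. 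This yields the monotonicity formula. The expected obstacle is purely bookkeeping: keeping track of signs and of the tangential/normal decomposition of $x$ and $\nabla\Phi$ so that the two non-square terms pair up correctly under the Gaussian weight; the geometric content is the identity $\mathrm{div}_M x = n$, and no further ingredient is needed.
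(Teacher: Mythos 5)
Your proof is correct, and it is the standard argument for Huisken's monotonicity formula (the paper itself states the theorem with a citation to \cite{H2} and does not reproduce a proof). The three ingredients you use — the first-variation identity $\frac{d}{dt}\int_{M_t} f = \int_{M_t}(\partial_t f - H\langle\nabla f,\nn\rangle - H^2 f)$, the elementary computation of $\partial_t\Phi$ and $\nabla\Phi$, and the integration by parts via $\mathrm{div}_M(x^T)=n-H\langle x,\nn\rangle$ together with $\nabla_M\Phi = \tfrac{\Phi}{2t}x^T$ — are exactly how the identity is derived in Huisken's original paper and in standard references such as Ecker's book, and your bookkeeping of signs and of the tangential/normal split of $x$ is accurate.
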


\vskip4mm
Huisken's density is the limit of $\int_{M_t} \Phi_{x_0,t_0}$ as $t\to t_0$.
That is,
\begin{equation}
    \Theta_{x_0, t_0} = \lim_{t\to t_0} \int_{M_t}\Phi_{x_0,t_0}\, ;
\end{equation}
this limit exists by the monotonicity \eqr{e:huisken2} and the density is non-negative as the integrand $\Phi_{x_0,t_0}$ is non-negative.

A fundamental aspect of this is that 
 Huisken's Gaussian volume $\int_{M_t} \Phi$ is constant in time if and only if   $M_t$ is a self-similar shrinker with
 $$
 M_t = \sqrt{-t} \, M_{-1}  \, .
 $$

\subsection{Tangent flows}

If $M_t$ is a MCF, then for all fixed constants $\lambda>0$ one can obtain a new MCF by scaling space by $\lambda$ and scaling time by $\lambda^{2}$.  The different scaling in time and space comes from that MCF is a parabolic equation where time accounts for one derivative and space for two just like the ordinary heat equation.  This type of scaling is usually referred to as parabolic scaling and it guarantees that the new one-parameter family also flows by MCF.  Precisely, parabolic scaling of $M_t$ with a constant $\lambda > 0$ is the new one-parameter family
\begin{equation}
	\tilde{M}_t = \lambda \, M_{\lambda^{-2} \, t}  \, .  \notag
\end{equation}
When $\lambda$ is large, this magnifies a small neighborhood of the origin in space-time.

If we now take a sequence $\lambda_i \to \infty$ and let $M^i_t = \lambda_i \, M_{\lambda_i^{-2} \, t}$, 
then
Huisken's monotonicity gives uniform Gaussian area bounds on the rescaled sequence. Combining this with Brakke's weak compactness theorem for mean curvature flow, it follows that 
a subsequence of the $M_t^i$ converges to a limiting flow $M^{\infty}_t$  (cf., for instance, \cite{W} and  \cite{I2}).  Moreover, Huisken's monotonicity implies that the Gaussian area (centered at the origin) is now constant in time, so we conclude that $M^{\infty}_t$ is  a self-similar shrinker.
This $M^{\infty}_t$ is called a {\emph{tangent flow}} at the origin.  The same construction can be done at any point of space-time.

\subsection{Gaussian integrals and the $F$-functionals}

We will next define a family of functionals on the space of hypersurfaces given by integrating Gaussian weights with varying centers and scales.
For $t_0>0$ and $x_0\in \RR^{n+1}$,   define   $F_{x_0,t_0}$  by
\begin{align}
F_{x_0 , t_0} (M)  = (4\pi t_0)^{-n/2} \, \int_{M} \, \e^{-\frac{|x- x_0|^2}{4t_0}} \, d\mu = \int_{M}\Phi_{x_0,t_0}(\cdot,0)\, .   \notag
\end{align}
We will think of $x_0$ as being the point in space that we focus on and $t_0$ as being the scale.  By convention, we set
$F=F_{0,1}$.

\subsection{Critical points for the $F$-functional}

 We will say that $M$ is a critical point for $F_{x_0 , t_0} $ if it is simultaneously critical with respect to variations in all three parameters, i.e.,   variations in $M$ and all variations in $x_0$  and   $t_0$.  Strictly speaking, it is the triplet $(M, x_0 , t_0)$ that is a critical point of $F$, but we will refer   to $M$ as a critical point of $F_{x_0 , t_0}$.
 The next proposition shows that $M$ is a critical point for $F_{x_0 , t_0} $ if and only if it is the time $-t_0$ slice of a self-shrinking solution of the mean curvature flow that becomes extinct at the point $x_0$ and time $0$.

 \begin{Pro}	\label{p:critall}
 (Colding-Minicozzi, \cite{CM1})
$M$ is a critical point for $F_{x_0 , t_0} $ if and only if  $M$ is a self-shrinker becoming extinct at the point $x_0$ in space and at time $t_0$ into the future.
\end{Pro}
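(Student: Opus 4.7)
The plan is to compute the first variation of $F_{x_0,t_0}$ separately in each of the three parameters and then package the results.

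First, I would compute the variation in $M$. If $M_s$ is a normal variation of $M$ with speed $\xi$ (so $\partial_s x = \xi \nn$ at $s=0$), then the area element changes by $\xi H\, d\mu$ while the weight $\rho(x) = (4\pi t_0)^{-n/2}\,\e^{-|x-x_0|^2/(4t_0)}$ changes at each point by $\xi \langle \nn, \nabla \rho\rangle = -\xi\,\rho\, \langle x-x_0,\nn\rangle/(2t_0)$. Combining these,
\begin{equation}
\frac{d}{ds}\bigg|_{s=0} F_{x_0,t_0}(M_s) = \int_M \xi\,\rho\,\Bigl(H - \frac{\langle x-x_0,\nn\rangle}{2t_0}\Bigr)\,d\mu,\notag
\end{equation}
so criticality in $M$ is equivalent to the self-shrinker equation $(\mathrm{i})\ H = \langle x-x_0,\nn\rangle/(2t_0)$, which is exactly the equation for $M$ to be the time $0$ slice of a MCF extincting at $(x_0,t_0)$.

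Next I would compute the variations in $x_0$ and $t_0$ by differentiating under the integral. Varying $x_0$ in direction $y$ gives criticality condition $(\mathrm{ii})\ \int_M \langle x-x_0,y\rangle\,\rho\,d\mu = 0$ for all $y$, while varying $t_0$ gives $(\mathrm{iii})\ \int_M |x-x_0|^2 \rho\,d\mu = 2nt_0 \int_M \rho\,d\mu$ (the extra factor $-n/(2t_0)$ coming from differentiating the normalization $(4\pi t_0)^{-n/2}$, and the $|x-x_0|^2/(4t_0^2)$ from differentiating the exponent).

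The key remaining step — this is really the heart of the proof — is to show that $(\mathrm{i})$ alone implies both $(\mathrm{ii})$ and $(\mathrm{iii})$, so the three conditions collapse to the self-shrinker equation. For this, I would introduce the drift Laplacian $\cL f = \Delta_M f - \tfrac{1}{2t_0}\langle x-x_0,\nabla^M f\rangle$, which is self-adjoint with respect to the weighted measure $\rho\, d\mu$, and apply $\int_M \cL f\,\rho\,d\mu = 0$ (valid since $M$ is either closed or has polynomial volume growth with the Gaussian weight providing decay at infinity) to the two test functions $f_1 = \langle x-x_0,e\rangle$ and $f_2 = |x-x_0|^2$. Using the standard identities $\Delta_M x = -H\nn$ and $\Delta_M |x-x_0|^2 = 2n - 2H\langle x-x_0,\nn\rangle$, substituting $(\mathrm{i})$ into the computation, a short cancellation yields $\cL f_1 = -f_1/(2t_0)$ and $\cL f_2 = 2n - |x-x_0|^2/t_0$. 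Integrating against $\rho\,d\mu$ then gives exactly $(\mathrm{ii})$ and $(\mathrm{iii})$, respectively.

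Putting the three computations together, $(M,x_0,t_0)$ is simultaneously critical in all three parameters if and only if $M$ satisfies the self-shrinker equation $H = \langle x-x_0,\nn\rangle/(2t_0)$, equivalently if and only if $M$ is the time $0$ slice of a self-similar shrinker becoming extinct at the point $x_0$ at time $t_0$ into the future. The main obstacle, beyond the bookkeeping, is justifying the weighted integration by parts for non-compact shrinkers (such as cylinders) — the Gaussian weight together with polynomial area growth handles this, but it is the one analytic point where care is needed.
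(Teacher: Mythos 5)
Your proposal is correct and follows essentially the same route as the proof in \cite{CM1} (this survey states the proposition but defers the proof to that reference): compute the first variation of $F_{x_0,t_0}$ in each of the three parameters, observe that criticality in $M$ is exactly the shrinker equation $H = \langle x - x_0,\nn\rangle/(2t_0)$, and then show that the other two conditions are automatic consequences of the shrinker equation by applying weighted integration by parts to $\langle x-x_0,e\rangle$ and $|x-x_0|^2$ via the drift Laplacian $\cL$ (in \cite{CM1} this operator appears as the first-order part of the stability operator $L$). The computations $\cL f_1 = -f_1/(2t_0)$ and $\cL f_2 = 2n - |x-x_0|^2/t_0$ after substituting the shrinker equation are exactly right, and you have correctly identified the one analytic point that needs justification in the non-compact case — that the Gaussian weight together with polynomial volume growth permits the integration by parts; \cite{CM1} handles this by the same decay estimate.
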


\subsection{$F$-Stable or index $0$ critical points}  

A closed self-shrinker is said to be $F$-stable or just stable if, modulo translations and dilations, the second derivative of the $F$-functional is non-negative for all variations at the given self-shrinker, see \cite{CM1} for the precise definition as well as the definition of stability for non-compact self-shrinkers.    In \cite{CM1} it was shown that the round sphere is the only closed stable self-shrinker, i.e., closed index $0$ critical point for the $F$-functional modulo translations and dilations; see more in a later section.
 
 There are two equivalent ways of formulating the stability precisely for a closed self-shrinker.   We explain both since each way of thinking about stability has its advantages.  The first makes use of the whole family of $F$-functionals and is the following: 
 \begin{quote}
 A closed self-shrinker is said to be $F$-stable if for every one-parameter family of variations $\Sigma_s$ of $\Sigma$ (with $\Sigma_0 = \Sigma$), there exist variations $x_s$ of $x_0$ and $t_s$ of $t_0$ that make
 $F'' = \left( F_{x_s , t_s}(\Sigma_s) \right)'' \geq 0$ at $s=0$.
 \end{quote}
 The other (obviously equivalent) way of thinking about stability is where we think of a single $F$-functional and mod out by translations and dilations.  This second way will be particularly useful   in a later section when we discuss the dynamics of the flow near a closed unstable self-shrinker. 
 \begin{quote}
 A closed self-shrinker is said to be $F$-stable if for every one-parameter family of variations $\Sigma_s$ of $\Sigma$ (with $\Sigma_0 = \Sigma$), there exist variations $x_s$ of $0$ and $\lambda_s$ of $1$ that make
 $F'' = \left( F(\lambda_s\,\Sigma_s+x_s) \right)'' \geq 0$ at $s=0$.
 \end{quote}
 
 \begin{Thm}
 (Colding-Minicozzi, \cite{CM1}).  
 In $\RR^{n+1}$ the round sphere $\SS^n$ is the only closed smooth $F$-stable self-shrinker.
 \end{Thm}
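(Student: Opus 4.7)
I begin with Huisken's classification of mean-convex closed self-shrinkers in $\RR^{n+1}$: every closed smooth self-shrinker with $H>0$ is the round sphere $\SS^n(\sqrt{2n})$. Granting this, it suffices to prove that any closed $F$-stable self-shrinker has $H>0$ throughout.

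The key input is the second variation of $F_{x_0,t_0}$ at a self-shrinker $\Sigma$, derived in \cite{CM1}. Consider a one-parameter family of triples $(\Sigma_s,x_s,t_s)$ with normal speed $u$ on $\Sigma$ at $s=0$, $y = x_s'(0)$, and $h = t_s'(0)$. The second derivative $F''(0)$ is a quadratic form in $(u,y,h)$ built from the stability operator
\begin{equation*}
L \;=\; \Delta \;-\; \tfrac12\langle x,\nabla\,\cdot\,\rangle \;+\; |A|^2 \;+\; \tfrac12,
\end{equation*}
together with coupling terms in $h$ and $y$ that reflect the dilation- and translation-invariance of the problem. The operator $L$ is self-adjoint on the weighted Hilbert space $L^2(\Sigma,\Phi)$, and a direct calculation gives the two canonical families of eigenfunctions $LH=H$ and $L\langle v,\nn\rangle = \tfrac12\langle v,\nn\rangle$ for every constant $v\in\RR^{n+1}$. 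Setting $V := \mathrm{span}\{H,\langle e_1,\nn\rangle,\ldots,\langle e_{n+1},\nn\rangle\}$, optimizing $F''$ over the auxiliary parameters $(y,h)$ shows that $F$-stability is equivalent to the spectral condition
\begin{equation*}
\int_\Sigma u\,L u\,\Phi \;\leq\; 0 \quad\text{for every smooth }u\in V^\perp,
\end{equation*}
where orthogonality is taken in $L^2(\Sigma,\Phi)$.

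To conclude that $H>0$, first note that $H\not\equiv 0$, since $\RR^{n+1}$ admits no closed embedded minimal hypersurface. By the strong maximum principle applied to the parabolic equation $\partial_t H = \Delta H + |A|^2 H$ satisfied by $H$ along the induced mean curvature flow, $H$ cannot attain the value $0$ without vanishing identically; hence if $H$ is of one sign it is strictly of that sign, and after choosing the appropriate orientation of $\nn$ one has $H>0$. I must therefore rule out the case in which $H$ changes sign. Assume for contradiction that it does. Then the top eigenvalue $\lambda_1$ of the self-adjoint operator $L$ on the weighted Hilbert space is attained by a strictly positive eigenfunction $\phi_1$ (Perron-Frobenius, applied to $-L$ regarded as a Schr\"odinger-type operator). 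Since $H$ is an eigenfunction with eigenvalue $1$ but is not of constant sign, $\phi_1$ is not a scalar multiple of $H$, and therefore $\lambda_1>1$. Eigenfunctions of $L$ belonging to distinct eigenvalues are orthogonal in $L^2(\Sigma,\Phi)$, so $\phi_1\perp V$. But then
\begin{equation*}
\int_\Sigma \phi_1\,L\phi_1\,\Phi \;=\; \lambda_1 \int_\Sigma \phi_1^2\,\Phi \;>\; 0,
\end{equation*}
violating the stability inequality on $V^\perp$. Thus $H>0$, and Huisken's theorem identifies $\Sigma$ as the round sphere $\SS^n(\sqrt{2n})$.

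The main obstacle in implementing this plan is the verification that the auxiliary parameters $h$ and $y$ really do absorb exactly the directions spanned by $H$ and $\langle v,\nn\rangle$, so that $F$-stability reduces to the clean spectral condition on $V^\perp$. This requires a careful second variation computation (extending Huisken's monotonicity calculation, Theorem \ref{t:huiskenmon}, to second order) together with an explicit optimization, both carried out in \cite{CM1}. The remaining spectral input --- Perron-Frobenius together with the strong maximum principle applied to the parabolic equation for $H$ --- is standard once the smoothness and closedness of $\Sigma$ are used.
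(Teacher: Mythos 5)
Your proposal is correct and matches the proof in \cite{CM1}: reduce $F$-stability to the stability inequality $\int_\Sigma u\,Lu\,\Phi \leq 0$ on $V^\perp$ via the second variation formula and optimization over $(y,h)$, show that if $H$ changes sign then the (positive, simple) principal eigenfunction $\phi_1$ of $L$ has eigenvalue $>1$ and is weighted-$L^2$-orthogonal to $V$ hence violates stability, and finally invoke Huisken's classification of closed mean-convex self-shrinkers. A minor stylistic point: since $LH = H$ is an elliptic equation, the elliptic strong maximum principle applied directly to $H\geq 0$ gives $H>0$ more cleanly than routing through the parabolic evolution equation for $H$, but the conclusion is the same.
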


\section{Generic singularities}
 
If $M_t$ flows by mean curvature and $t>s$, then Huisken's monotonicity formula gives 
\begin{equation}	\label{e:huiskenF}
		F_{x_0 , t_0} ( M_t)  \leq  F_{x_0 , t_0 + (t-s)} (M_s)  \, .
\end{equation}
Thus, we see that a fixed $F_{x_0,t_0}$ functional is not monotone under the flow, but the supremum over all of these functionals is monotone.    We call this invariant
 the entropy and denote it by
\begin{align}	\label{e:entropy}
\lambda (M) = \sup_{x_0 , t_0} \, F_{x_0 , t_0} (M)  \, .  
\end{align}
The entropy has three key properties:
\begin{enumerate}
\item $\lambda$ is invariant under dilations, rotations, and translations. \label{e:key1}
\item $\lambda (M_t)$ is non-increasing under MCF.\label{e:key2}
\item If $M$ is a self-shrinker, then $\lambda (M) = F_{0,1} (M) = \Theta_{0,0}$.
\end{enumerate}

\vskip2mm
A consequence of \eqr{e:key1} is, loosely speaking, that the entropy coming from a singularity  is independent of the time when it occurs, the point where it occurs, and even of the  scale at which the flow starts to resemble the singularity.

Note also that one way of thinking about \eqr{e:key2} is that $\nabla \,\Vol$ and $\nabla \,\lambda$ point toward the same direction 
in the sense that $\langle \,\nabla \Vol, \nabla \,\lambda\rangle\geq 0$.  We will use this later.


\subsection{How entropy is used}
The main point about $\lambda$ is that it can 
 be used to rule out certain singularities because
 of the monotonicity of entropy under MCF and its invariance under dilations:
 
 \begin{Cor}
If $M$ is a self-shrinker that occurs as a tangent flow for $M_t$ with $t>0$, then
\begin{equation}
  F_{0,1} (M) = \lambda (M) \leq \lambda (M_0) \, . \notag
 \end{equation}
 \end{Cor}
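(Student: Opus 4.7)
The plan is to read off the equality $F_{0,1}(M)=\lambda(M)$ from property \eqr{e:key1}--(3) listed just before the corollary, and then derive the inequality $\lambda(M)\le \lambda(M_0)$ by combining the construction of tangent flows by parabolic rescaling with the scale invariance and monotonicity of entropy.

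For the equality, note that by hypothesis $M$ is a self-shrinker (it arises as a tangent flow, which was shown earlier to be self-similar using Huisken's monotonicity). Property (3) of entropy then gives directly
\begin{equation}
F_{0,1}(M)=\lambda(M)=\Theta_{0,0}(M).\notag
\end{equation}

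For the inequality, recall that the tangent flow $M^\infty_t$ at a space-time point $(x_0,t_0)$ with $t_0>0$ is obtained as a subsequential limit of the parabolically rescaled flows $M^i_t=\lambda_i(M_{t_0+\lambda_i^{-2}t}-x_0)$ for some $\lambda_i\to\infty$, and $M=M^\infty_{-1}$. Each $M^i_t$ is itself a mean curvature flow, so by the invariance of $\lambda$ under translations and dilations (property (1)) and then by the monotonicity of entropy along MCF (property (2)),
\begin{equation}
\lambda(M^i_{-1})=\lambda\bigl(M_{t_0-\lambda_i^{-2}}\bigr)\le \lambda(M_0),\notag
\end{equation}
where in the last inequality we used that $t_0-\lambda_i^{-2}>0$ for $i$ sufficiently large, so the entropy can be compared with that of the initial slice.

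It remains to pass to the limit as $i\to\infty$. For any fixed $(x,t_0')$ with $t_0'>0$, the functional $F_{x,t_0'}$ is continuous under smooth convergence on compact subsets (and, more generally, upper semicontinuous under Brakke convergence), hence
\begin{equation}
F_{x,t_0'}(M)\le \liminf_{i\to\infty} F_{x,t_0'}(M^i_{-1})\le \liminf_{i\to\infty}\lambda(M^i_{-1})\le \lambda(M_0).\notag
\end{equation}
Taking the supremum over $(x,t_0')$ yields $\lambda(M)\le \lambda(M_0)$, completing the proof. The main technical subtlety is justifying the passage to the limit of the $F$-integrals along the rescaled sequence; in the smooth regime this is automatic, while the weak Brakke setting requires the semicontinuity statements that accompany Brakke's compactness theorem (already invoked in the construction of the tangent flow itself).
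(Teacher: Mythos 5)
Your argument is correct and follows the same route the paper intends: the equality is read off from property (3), and the inequality comes from combining the tangent-flow construction with properties (1) and (2) and then passing to the limit. The paper presents the corollary without proof precisely as a consequence of those properties, so your write-up is essentially the official proof spelled out. One small terminological slip: when you pass to the limit you invoke \emph{upper} semicontinuity of $F_{x,t_0'}$ under Brakke convergence, but what you actually use (and need) is the inequality $F_{x,t_0'}(M)\le \liminf_i F_{x,t_0'}(M^i_{-1})$, i.e.\ \emph{lower} semicontinuity of the Gaussian mass under weak varifold convergence with uniform area bounds (mass can only drop, not appear, in the limit); the inequality points in the right direction, so the proof stands, but you should name it correctly. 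A marginally slicker route that avoids the limiting step on $F$-integrals entirely is to use Huisken's density directly: by property (3), $\lambda(M)=F_{0,1}(M)=\Theta_{x_0,t_0}$, and by the monotonicity formula \eqr{e:huisken2}, $\Theta_{x_0,t_0}\le F_{x_0,t_0}(M_0)\le\lambda(M_0)$.
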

  
\subsection{Classification of entropy stable singularities}

The next theorem shows that the only singularities that cannot be perturbed away are the simplest ones.

\begin{Thm}	\label{c:nonlin1a}
(Colding-Minicozzi, \cite{CM1})
Suppose that $M^n \subset \RR^{n+1}$ is a smooth complete embedded
self-shrinker without boundary and with polynomial volume growth.
\begin{enumerate}
\item If $M$ is not equal to  $\SS^k\times \RR^{n-k}$, then
there is a graph $N$ over $M$ of a  function with arbitrarily small $C^m$ norm (for any fixed $m$) so that
  $\lambda ( N) < \lambda (M)$.
  \item If $M$ is not $\SS^n$ and does not split off a line, then   the function in (1) can be taken to have compact support.
  \end{enumerate}
  In  particular,   in either case, $M$  cannot arise as a tangent flow to the MCF starting from $N$.
\end{Thm}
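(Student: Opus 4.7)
The proof strategy is to reduce the entropy-instability statement to an $F$-instability statement and then control the supremum appearing in the definition of entropy. Since $M$ is a self-shrinker, property (3) of entropy gives $\lambda(M) = F_{0,1}(M)$, and by Proposition \ref{p:critall} the triple $(M,0,1)$ is a critical point of $F$. Building on the theorem that $\SS^n$ is the only closed $F$-stable self-shrinker, the first step is to extend the stability classification to the non-compact polynomial-volume-growth setting and conclude that $\SS^k\times\RR^{n-k}$ are the only $F$-stable self-shrinkers. The hypothesis $M\ne\SS^k\times\RR^{n-k}$ therefore produces a variation $\Sigma_s$ of $M$ with $(F(\lambda_s\Sigma_s+x_s))''|_{s=0}<0$ for all choices of $x_s$, $\lambda_s$, i.e.\ $F$-instability modulo translations and dilations.

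The second step converts $F$-instability to a strict drop in the full entropy $\sup_{x_0,t_0}F_{x_0,t_0}$. For each small $s$ I would produce a maximizer $(x(s),t(s))$ of $(x_0,t_0)\mapsto F_{x_0,t_0}(\Sigma_s)$ and show $(x(s),t(s))\to(0,1)$ smoothly. Existence of a maximizer uses polynomial volume growth combined with the Gaussian decay in $\Phi_{x_0,t_0}$, which localizes the integral near $x_0$ and keeps $F_{x_0,t_0}$ bounded away from the singular regime $t_0\to 0$. Smoothness in $s$ follows from the implicit function theorem once one checks non-degeneracy of the Hessian of $F_{x_0,t_0}(M)$ in $(x_0,t_0)$ at $(0,1)$; this reflects the fact that $(x_0,t_0)$-derivatives generate the translation and dilation directions that are precisely those quotiented out in the $F$-stability definition. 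A second-order Taylor expansion then gives
\[
\lambda(\Sigma_s)=F_{x(s),t(s)}(\Sigma_s)=F_{0,1}(M)+\frac{s^2}{2}F''_{x(s),t(s)}(\Sigma_s)\big|_{s=0}+O(s^3),
\]
where the first-order term vanishes because $M$ is critical in the $M$-direction and $(x(s),t(s))$ is critical in the $(x_0,t_0)$-direction, and the second-order term is negative by step one.

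For part (2) I would upgrade $\Sigma_s$ to a compactly supported variation by multiplying the unstable direction by a cutoff $\phi_R$ supported in the ball $B_R(0)$. The Gaussian weight $\e^{-|x|^2/4}$ built into $F$ ensures the tails outside $B_R$ contribute at most $O(\e^{-cR^2})$ to the second variation, so $F''<0$ persists for $R$ large provided the unstable direction has a non-trivial compactly supported projection. The hypotheses $M\ne\SS^n$ and $M$ does not split off a line are exactly what guarantees this: translation in a splitting-off direction is a zero mode of $F''$ that cannot be cut off without losing negativity, while for $\SS^n$ the entire instability comes from translations and dilations which are not compactly supportable.

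The main obstacle is the second step, controlling the supremum. A priori, perturbing $M$ could shift the optimal $(x_0,t_0)$ away from $(0,1)$ (or off to infinity), so pointwise $F$-instability at $(0,1)$ need not imply a drop in the sup. The key analytic input is a uniform bound, in the $C^2$-topology in $(x_0,t_0)$, on the difference $F_{x_0,t_0}(\Sigma_s)-F_{x_0,t_0}(M)$, combined with the Gaussian weight and polynomial volume growth to confine the maximizer to a compact region. Only after this compactness-plus-implicit-function-theorem step is in place can the Taylor expansion above be leveraged. The final conclusion that $M$ cannot arise as a tangent flow starting from $N$ is then immediate from property \eqref{e:key2} of entropy and the observation that a tangent flow shrinker $M$ satisfies $F_{0,1}(M)=\lambda(M)\leq\lambda(N)<\lambda(M)$, a contradiction.
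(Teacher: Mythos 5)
Your overall strategy --- pass from entropy instability to $F$-instability, use the classification of $F$-stable self-shrinkers, then convert a negative second variation of $F$ into a strict drop of the entropy by controlling where the supremum over $(x_0,t_0)$ is attained --- is indeed the route taken in \cite{CM1}, and you correctly identify the control of the supremum as the real analytic crux. However, the logical architecture of your cases (1) and (2) is inverted, and this creates a genuine gap. You propose to prove (1) first by an implicit-function-theorem argument and then to obtain (2) by truncating the unstable direction with a cutoff. But the IFT step requires non-degeneracy of the Hessian of $(x_0,t_0)\mapsto F_{x_0,t_0}(M)$ at $(0,1)$, and this fails precisely when $M$ splits off a line: translating $x_0$ along the split factor leaves $F$ unchanged, so the Hessian has a kernel. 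Case (1) explicitly allows such $M$ (anything of the form $\Sigma_0\times\RR^\ell$ that is not a generalized cylinder), so your Step~2 breaks there. Relatedly, your localization of the maximizer also fails for splitting $M$: the supremum of $F_{x_0,t_0}(M)$ is then attained along an entire non-compact family of $(x_0,t_0)$, so no uniform compactness in $(x_0,t_0)$ holds, and you cannot confine the maximizer for $\Sigma_s$.

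The correct order, as in \cite{CM1}, is the opposite: prove case (2) first (non-splitting $M$, compactly supported variation), where the non-splitting hypothesis is exactly what makes the Hessian of $F$ in $(x_0,t_0)$ strictly negative definite at $(0,1)$ and makes $F_{x_0,t_0}(M)$ decay strictly away from $(0,1)$, which together confine the maximizer; then handle case (1) by writing $M=\Sigma_0\times\RR^\ell$ with $\Sigma_0$ non-splitting and not a sphere, perturbing $\Sigma_0$ by the compactly supported variation from case (2), and crossing with $\RR^\ell$ (using $\lambda(\Sigma\times\RR)=\lambda(\Sigma)$). This is why the perturbation in case (1) is not compactly supported. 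Finally, a smaller point of confusion: $\SS^n$ is excluded from case (2) because it is $F$-\emph{stable}, not because ``the entire instability comes from translations and dilations.'' Translations and dilations are always quotiented out of the stability notion; for $\SS^n$ there simply is no unstable direction remaining, which is the content of the classification theorem you invoke.
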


Thus, spheres, planes and cylinders are the only generic self-shrinkers.
 
  \vskip2mm
  In fact, we have the following stronger result where the self-shrinker is allowed to have singularities:

 \begin{Thm}	\label{t:stronger}
 (Colding-Minicozzi, \cite{CM1})
 Theorem \ref{c:nonlin1a} holds when $n\leq 6$ and $M$ is smooth off of a singular set with locally finite $(n-2)$-dimensional Hausdorff measure.
 \end{Thm}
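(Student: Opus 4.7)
The plan is to adapt the proof of Theorem \ref{c:nonlin1a} from the smooth case to the singular setting. The essential observation is that a set with locally finite $(n-2)$-dimensional Hausdorff measure has codimension at least $2$, hence zero capacity in the natural weighted Sobolev space on $M$; this allows us to treat $\Sing(M)$ as essentially invisible for the variational arguments that drive the smooth proof.

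First I would construct weighted cutoff functions $\eta_\epsilon \in C^\infty_c(\Reg(M))$ that are identically $1$ outside a shrinking neighborhood of $\Sing(M)$ and satisfy
\begin{equation}
\int_M |\nabla \eta_\epsilon|^2 \, \e^{-|x|^2/4} \to 0 \quad \text{as } \epsilon \to 0. \notag
\end{equation}
Such $\eta_\epsilon$ exist by a standard capacity argument: for each compact $K\subset M$, cover $K\cap \Sing(M)$ by balls $B_{r_i}(p_i)$ with $\sum r_i^{n-2}$ controlled by $\HH^{n-2}(K\cap \Sing(M))+\epsilon$, and glue together logarithmic cutoffs around each $p_i$. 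Using these cutoffs, I would then verify that all ingredients of the smooth proof carry through: integration by parts against the drift Laplacian $\cL=\Delta-\tfrac{1}{2}\langle x,\nabla\cdot\rangle$ works with negligible boundary contribution; the spectral theory of the Jacobi-type stability operator remains well-defined on the weighted $L^2$ space, so variations decompose into eigenmodes; and the classification of $F$-stable self-shrinkers continues to single out the generalized cylinders $\SS^k\times\RR^{n-k}$. For a non-cylinder $M$, an unstable direction multiplied by $\eta_\epsilon$ for small $\epsilon$ then yields an admissible perturbation that strictly decreases $F$ modulo translations and dilations; graphing it over $\Reg(M)$ produces the desired $N$ with $\lambda(N)<\lambda(M)$, compactly supported in $\Reg(M)$ in case (2).

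The hardest step will be extending the classification of $F$-stable self-shrinkers and the associated rigidity arguments to the singular setting. The dimension restriction $n\leq 6$ enters precisely here: Simons-type regularity in this range controls the structure of tangent objects at points of $\Sing(M)$, ruling out exotic stable cones for the corresponding minimal-surface problem in the Gaussian metric, and it is what lets the rigidity of the generalized cylinders survive. A secondary technical point, also sensitive to dimension, is to confirm that the perturbed $N$ is itself a legitimate hypersurface with polynomial volume growth, so that $\lambda(N)$ is well-defined and strictly below $\lambda(M)$; this again relies on codimension-$2$ control over where $N$ can fail to be smooth.
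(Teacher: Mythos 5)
The survey does not actually prove Theorem \ref{t:stronger}; like the other statements in this section it is recorded with a citation to \cite{CM1}, so there is no in-text proof to compare yours against. That said, your outline is broadly consistent with the route taken in \cite{CM1}: build a logarithmic capacity cutoff adapted to the $\HH^{n-2}$ bound so that the singular set becomes negligible for the Gaussian-weighted first and second variation, and then re-run the smooth $F$-stability and entropy-stability arguments on $\Reg(M)$. One point of care that your sketch glosses over: for the cutoff error $\int |\nabla\eta_\epsilon|^2\, f^2\, \e^{-|x|^2/4}$ to go to zero you need the test function $f$ to be bounded near $\Sing(M)$, and an arbitrary unstable eigenmode of the stability operator has no reason to be. In \cite{CM1} the variations are built from the specific eigenfunctions $H$ and $\langle v,\nn\rangle$ (eigenvalues $1$ and $1/2$), which are bounded on a self-shrinker with polynomial volume growth precisely because $H = \langle x,\nn\rangle/2$; using ``an unstable direction'' as you propose would require you to argue boundedness separately.

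Where I would push back harder is your account of the $n\leq 6$ restriction. You attribute it to ruling out exotic stable cones for the minimal-surface problem in the conformal Gaussian metric. But as the survey itself points out, that metric is incomplete and has curvature blowing up exponentially, so minimal-surface regularity theory does not transfer verbatim, and this framing conflates two different variational problems. In \cite{CM1} the dimension restriction enters in extending the classification of mean-convex ($H\geq 0$) self-shrinkers to the singular setting: a Simons-type estimate identifies generalized cylinders on the regular part, and a dimension-reduction through tangent cones at points of $\Sing(M)$ invokes the low-dimensional theory of minimal cones. Your heuristic that low dimension forbids bad cones is in the right ballpark, but the cones in question arise from the shrinker's own singularities, not from $F$-stability of the ambient problem, and locating that distinction precisely is where the technical weight of the proof sits.
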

 
 \subsection{Self-shrinkers with low entropy/Gaussian surface area}
 
 Recall that the $F$-functional of a hypersurface $\Sigma$ of Euclidean space $\RR^{n+1}$ is the Gaussian surface area\footnote{Gaussian surface area has also been studied in convex geometry and in theoretical computer science.}
\begin{equation}
F(\Sigma)= \left( 4 \, \pi \right)^{ - \frac{n}{2} } \, \int_{\Sigma}\e^{-\frac{|x|^2}{4}}\, , 
\end{equation}
whereas the Gaussian entropy is the supremum over all Gaussian surface areas given by 
\begin{equation}
\lambda (\Sigma)=\sup  \, \,  \left( 4 \, \pi \, t_0 \right)^{ - \frac{n}{2} } \, \int_{\Sigma}\e^{-\frac{|x-x_0|^2}{4t_0}}\, .
\end{equation}
Here the supremum is taking over all $t_0>0$ and $x_0\in \RR^{n+1}$.  
Entropy is invariant under rigid motions and dilations.  Moreover, by section $7$ in \cite{CM1},  the entropy of a self-shrinker is equal to the value of $F$ and, thus,
 no supremum is needed.  Therefore by a result of Stone $\lambda (\SS^n)$ is decreasing in $n$ and
\begin{align}
	\lambda (\SS^1) = \sqrt{ \frac{2\pi}{\e} } \approx 1.5203 > \lambda (\SS^2) = \frac{4}{\e} \approx 1.4715 >
	\lambda (\SS^3) > \dots > 1 = \lambda (\RR^n) \, . 
\end{align}
Moreover, a simple computation shows that $\lambda (\Sigma \times \RR) = \lambda (\Sigma)$.

  \vskip2mm
 It follows from Brakke's regularity theorem for MCF that $\RR^n$ has the least entropy of any self-shrinker and, in fact, there is a gap to the next lowest.
A natural question is: Can one classify all low entropy self-shrinkers and if so what are those?  In \cite{CIMW} it is shown that the round sphere has the least entropy of any 
{\emph{closed}} self-shrinker.  

\begin{Thm}   \label{t:minentropy}
 (Colding-Ilmanen-Minicozzi-White, \cite{CIMW})
Given $n$, there exists $\epsilon =\epsilon (n) > 0$ so that if $\Sigma\subset \RR^{n+1}$ is a closed self-shrinker not equal to the round sphere, then $\lambda (\Sigma)\geq \lambda (\SS^n)+\epsilon$.  
Moreover, if $\lambda (\Sigma)\leq \min \{\lambda (\SS^{n-1}),\frac{3}{2}\}$, then 
$\Sigma$ is diffeomorphic to $\SS^n$.{\footnote{If $n > 2$, then $\lambda (\SS^{n-1}) < \frac{3}{2}$ and the minimum is unnecessary.}}
\end{Thm}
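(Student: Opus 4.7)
The plan is to reduce the gap statement to the uniform lower bound $\lambda(M) \geq \lambda(\SS^n)$ for every smooth closed hypersurface $M \subset \RR^{n+1}$, and then deduce a uniform gap $\epsilon = \epsilon(n)$ by a compactness contradiction. For the reduction, a closed self-shrinker $\Sigma \neq \SS^n$ does not split off a line, so Theorem \ref{c:nonlin1a}(2) produces a compactly supported graph $N$ over $\Sigma$ with $\lambda(N) < \lambda(\Sigma)$ and $N$ arbitrarily $C^m$-close to $\Sigma$. Combined with the (still-to-be-proved) bound $\lambda(N) \geq \lambda(\SS^n)$, this yields $\lambda(\Sigma) > \lambda(\SS^n)$.

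To upgrade strict inequality to a uniform gap, I would argue by contradiction: assume a sequence of closed self-shrinkers $\Sigma_i \neq \SS^n$ with $\lambda(\Sigma_i) \to \lambda(\SS^n)$. Entropy bounds area and, together with the self-shrinker equation, also curvature, so by the smooth compactness theorem of \cite{CM4} a subsequence converges smoothly to a closed self-shrinker $\Sigma_\infty$ with $\lambda(\Sigma_\infty) = \lambda(\SS^n)$. The previous step forces $\Sigma_\infty = \SS^n$; but the $F$-stability classification of \cite{CM1} — $\SS^n$ is the unique closed smooth $F$-stable self-shrinker — combined with non-degeneracy of the second variation at $\SS^n$ (after modding out translations and dilations) makes $\SS^n$ isolated among closed self-shrinkers in the $C^m$ topology, so $\Sigma_i = \SS^n$ for large $i$, a contradiction.

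The crux is the bound $\lambda(N) \geq \lambda(\SS^n)$ for any closed hypersurface $N$. My plan is to run MCF from $N$: since $N$ is closed, the flow becomes singular in finite time, and by Huisken's monotonicity (Theorem \ref{t:huiskenmon}) every tangent flow $\Gamma$ satisfies $\lambda(\Gamma) \leq \lambda(N)$. After an arbitrarily small perturbation of $N$, I would show that at least one tangent flow along the evolution is a generalized cylinder $\SS^k \times \RR^{n-k}$ with $1 \leq k \leq n$. Stone's monotonicity $\lambda(\SS^k) > \lambda(\SS^{k+1})$ then gives $\lambda(\Gamma) = \lambda(\SS^k) \geq \lambda(\SS^n)$, hence $\lambda(N) \geq \lambda(\SS^n)$. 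The mechanism for producing a cylindrical tangent flow is: whenever a tangent flow is non-cylindrical, Theorem \ref{c:nonlin1a} provides a compactly supported perturbation which strictly decreases entropy, and iterating this perturbation at the level of initial data arranges that a generic small perturbation of $N$ exhibits only cylindrical tangent flows.

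For the diffeomorphism part, the hypothesis $\lambda(\Sigma) \leq \min\{\lambda(\SS^{n-1}), 3/2\}$ combined with entropy monotonicity implies that no tangent flow along the MCF of $\Sigma$ can be a proper cylinder $\SS^k \times \RR^{n-k}$ with $k \leq n-1$ (those have entropy $\lambda(\SS^k) \geq \lambda(\SS^{n-1})$), while the $3/2$ bound excludes the Angenent torus and the Kapouleas-Kleene-M\o ller and Nguyen shrinkers, whose entropies can be estimated to exceed $3/2$. Hence every tangent flow must be a round $\SS^n$, the MCF is smooth until a single round-point extinction, and $\Sigma$ is diffeomorphic through the flow to $\SS^n$. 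The hardest step I foresee is the generic-singularities claim in the previous paragraph: Theorem \ref{c:nonlin1a} is a \emph{static} statement about individual self-shrinkers, and converting it into the \emph{dynamical} assertion that a generic perturbation of initial data yields an MCF whose tangent flows are all $\SS^k \times \RR^{n-k}$ requires a substantial further argument about how singularities propagate along the flow.
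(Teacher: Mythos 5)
Your proposal reproduces, in expanded form, the heuristic outline the survey itself sketches right after the theorem statement, and you correctly flag its weakest joint in your last paragraph. But that joint is not just ``hard'' --- it is a genuine open problem, and the way you propose to fill it would in fact prove something strictly stronger than the theorem. The reduction you make, namely that $\lambda(N)\geq \lambda(\SS^n)$ for \emph{every} smooth closed hypersurface $N$, is precisely Conjecture \ref{c:minentropy}, which the paper records as open (in $n\leq 6$) and notes ``would allow us to carry out the outline above.'' Theorem \ref{t:minentropy} is the much weaker statement that the bound holds for closed \emph{self-shrinkers}, and its actual proof in \cite{CIMW} does not route through the generic-singularities conjecture at all.

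Two more concrete issues with the mechanism you sketch. First, Theorem \ref{c:nonlin1a}(2) gives a compactly supported entropy-decreasing perturbation only when the self-shrinker does not split off a line; a tangent flow of the form $\Gamma^{n-1}\times\RR$ (which you cannot rule out a priori) cannot be treated with a compactly supported graph, so the ``perturb away every non-cylindrical tangent flow'' step already fails at the statement level. Second, even when the perturbation of the tangent flow exists, transplanting it back to a perturbation of the original initial hypersurface $N$ is exactly the content of the open conjecture in the ``Flows beginning at a typical hypersurface avoid unstable singularities'' subsection; the obstruction is that Theorem \ref{c:nonlin1a} is a static statement about one shrinker, while you need a dynamical statement about all singularities of a single flow. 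Your instinct to run MCF and monitor entropy via Huisken monotonicity is the right heuristic, but it is the one the authors themselves present only as heuristic. The route in \cite{CIMW} is different: it perturbs the given closed shrinker $\Sigma$ one-sidedly using the lowest eigenfunction of the stability operator (exploiting that this eigenfunction has a sign, so the perturbed surface lies strictly inside or outside $\Sigma$), runs the \emph{weak} (level-set) flow from this perturbation with the self-shrinking $\Sigma$ as a barrier by avoidance, and derives the entropy inequality from the behavior of that single flow --- entirely sidestepping the need to classify or generically avoid all possible singularities of a general initial hypersurface. That is why the theorem is provable while Conjecture \ref{c:minentropy} is not yet.
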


  Theorem \ref{t:minentropy} is suggested by the dynamical approach to MCF of
   \cite{CM1} and \cite{CM2} that we will discuss in more detail later on.   The  idea is that MCF starting at  a  closed $M$ becomes singular, the corresponding self-shrinker has lower entropy and,
 by  \cite{CM1},   the only   self-shrinkers that cannot be perturbed away are   $\SS^{n-k} \times \RR^k$ and   $
 \lambda (\SS^{n-k} \times \RR^k) \geq \lambda (\SS^n)$.    
  
The dynamical picture also suggests two closely related conjectures; the first is for any closed hypersurface and the second is for   self-shrinkers:

\begin{Con}	\label{c:minentropy}
(Colding-Ilmanen-Minicozzi-White, \cite{CIMW})
Theorem \ref{t:minentropy} holds with $\epsilon = 0$  for any closed hypersurface $M^n$ 
with $n\leq 6$.
\end{Con}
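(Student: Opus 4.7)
The plan is to carry out the dynamical approach to MCF sketched in the excerpt, combining entropy monotonicity with the classification of entropy-stable self-shrinkers (Theorems \ref{c:nonlin1a} and \ref{t:stronger}). Suppose for contradiction that $M^n \subset \RR^{n+1}$ is closed, $n \leq 6$, and $\lambda(M) < \lambda(\SS^n)$. Since $M$ is closed the MCF $M_t$ becomes extinct in finite time, so let $t_0$ be the first singular time. Using Huisken's monotonicity (Theorem \ref{t:huiskenmon}) and Brakke's weak compactness, extract a tangent flow $\Sigma$ at a singular point. Then $\Sigma$ is a self-shrinker, possibly with a singular set of codimension at least two, and by \eqr{e:huiskenF} together with the three key properties of entropy,
\[
\lambda(\Sigma) \;\leq\; \lambda(M_{t_0}) \;\leq\; \lambda(M) \;<\; \lambda(\SS^n).
\]

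The next step is a case analysis of $\Sigma$. If $\Sigma$ is a generalized cylinder $\SS^k \times \RR^{n-k}$ with $1 \leq k \leq n$, then $\lambda(\Sigma) = \lambda(\SS^k) \geq \lambda(\SS^n)$ by the Stone computation recorded in the excerpt and by $\lambda(\Sigma\times\RR)=\lambda(\Sigma)$, contradicting the inequality above. If $\Sigma$ is the flat plane $\RR^n$, Brakke's regularity theorem forces $M_t$ to be smooth across $t_0$, again a contradiction. Thus $\Sigma$ must be a non-generic self-shrinker. Since $n \leq 6$, Theorem \ref{t:stronger} applies and produces a graphical perturbation of $\Sigma$ of strictly smaller entropy (compactly supported if $\Sigma$ does not split off a line, and arbitrarily $C^m$-small otherwise).

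The heart of the argument, and the main obstacle, is to upgrade this perturbation of the tangent flow to a perturbation $\tilde M$ of the initial data with $\lambda(\tilde M)$ arbitrarily close to $\lambda(M)$ and whose MCF avoids the non-generic singularity encountered by $M_t$. This is precisely the canonical-neighborhood / generic-MCF statement alluded to at the end of the introduction: in a parabolic space-time neighborhood of the singular point, $M_t$ is modeled on $\Sigma$ at the blow-up scale, so an entropy-lowering perturbation of $\Sigma$ can be transferred back to a perturbation of $M$ through the nearly self-similar dynamics. Because each singularity accounts for a definite entropy drop, only finitely many non-generic singular times can occur along the flow, so finitely many iterations of this perturbation produce initial data whose entire flow has only generic (cylindrical or spherical) tangent flows. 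The first case then yields $\lambda(\tilde M) \geq \lambda(\SS^n)$, and sending the perturbation to zero gives $\lambda(M) \geq \lambda(\SS^n)$, a contradiction. The equality case uses the rigidity in Huisken's monotonicity: equality throughout forces $M_t$ to be exactly self-similar, so $M$ itself is a closed self-shrinker, and Theorem \ref{t:minentropy} identifies it with the round sphere.

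For the diffeomorphism statement, the hypothesis $\lambda(M) \leq \min\{\lambda(\SS^{n-1}), \tfrac{3}{2}\}$ additionally rules out every cylinder $\SS^k \times \RR^{n-k}$ with $k < n$, so the only generic tangent flow at any singularity of the perturbed flow is the round $\SS^n$. Each such singularity corresponds to a component of $M_t$ shrinking to a round point, which forces the connected manifold $M$ to be diffeomorphic to $\SS^n$.
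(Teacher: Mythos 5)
The statement you set out to prove is, as the paper labels it, a \emph{conjecture}; the paper provides no proof, only the heuristic ``dynamical outline'' you are replaying. The gap in your argument is precisely the step you flag as ``the heart of the argument, and the main obstacle'': converting the entropy-lowering perturbation of the tangent flow $\Sigma$ (supplied by Theorems \ref{c:nonlin1a} and \ref{t:stronger}) into a perturbation of the initial hypersurface $M$ whose mean curvature flow actually avoids the non-generic singularity. Nothing in the paper establishes this. Theorem \ref{t:one} concerns only closed self-shrinkers; the statement that generic initial data yields only generic singularities is itself an unproven conjecture in the paper; and the canonical-neighborhood picture (Conjecture \ref{c:uniq2}) is explicitly ``work in progress'' and addresses already-generic cylindrical singularities, not the removal of non-generic ones. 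Your auxiliary claims --- that each singularity forces a ``definite'' entropy drop and that therefore only finitely many non-generic singular times occur --- are also unjustified: the perturbation from \cite{CM1} makes $\lambda$ strictly smaller but with no uniform lower bound on the decrease, and a small perturbation of $M$ can change the subsequent singular set in uncontrolled ways, so the proposed iteration is not known to terminate.

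The paper identifies exactly this obstruction and proposes a different, purely variational route around it: Conjecture \ref{c:c2}, asserting that \emph{every} non-flat self-shrinker $\Sigma^n\subset\RR^{n+1}$ with $n\leq 6$ satisfies $\lambda(\Sigma)\geq\lambda(\SS^n)$, with no genericity or stability hypothesis. Were that known, your first paragraph would already finish the argument: the tangent flow at the first singularity of $M_t$ is a non-flat self-shrinker, so by entropy monotonicity $\lambda(M)\geq\lambda(\Sigma)\geq\lambda(\SS^n)$, with no perturbation of initial data, no iteration, and no transfer of blow-up information backwards through the flow. The diffeomorphism conclusion would then follow because $\lambda(M)\leq\min\bigl\{\lambda(\SS^{n-1}),\tfrac{3}{2}\bigr\}$ together with monotonicity rules out all cylinders, forcing every tangent flow to be a round sphere. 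In short, the authors circumvent the gap rather than fill it; your proposal attempts to fill it and, as written, does not.
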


\begin{Con}	\label{c:c2}
(Colding-Ilmanen-Minicozzi-White, \cite{CIMW})
Theorem \ref{t:minentropy} holds   for any {\emph{non-flat}} self-shrinker $\Sigma^n \subset \RR^{n+1}$ 
with $n\leq 6$.

\end{Con}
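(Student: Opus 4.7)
The plan is to deduce Conjecture \ref{c:c2} from the closed case (Theorem \ref{t:minentropy}) by combining induction on the dimension $n$, a splitting reduction, and an iterated tangent-flow argument that promotes Theorems \ref{c:nonlin1a}(2) and \ref{t:stronger} from ``entropy can be strictly lowered'' to ``entropy cannot be below $\lambda(\SS^n)$''. I argue by contradiction: suppose $\Sigma^n\subset \RR^{n+1}$ is a non-flat self-shrinker with $\lambda(\Sigma) < \lambda(\SS^n)$ and $n\le 6$.

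First I make two reductions. If $\Sigma = \Sigma'\times \RR$ splits off a line for some $(n-1)$-dimensional self-shrinker $\Sigma'\subset \RR^n$, then $\Sigma$ non-flat forces $\Sigma'$ non-flat, while $\lambda(\Sigma'\times\RR)=\lambda(\Sigma')$; by the inductive hypothesis $\lambda(\Sigma)=\lambda(\Sigma')\ge \lambda(\SS^{n-1})>\lambda(\SS^n)$, contradicting our assumption. Similarly the generalized cylinders $\SS^{n-k}\times\RR^k$ satisfy $\lambda(\SS^{n-k}\times\RR^k)=\lambda(\SS^{n-k})\ge \lambda(\SS^n)$ by Stone's monotonicity. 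So $\Sigma$ is neither a cylinder nor splits off a line, and $\Sigma\neq \SS^n$, putting us in the setting of Theorem \ref{t:stronger}, which supplies a compactly supported graphical perturbation $N$ of $\Sigma$ with
\[
\lambda(N) < \lambda(\Sigma) < \lambda(\SS^n).
\]

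The core analytic step is to start MCF from $N$ and examine its tangent flow. Because $N$ agrees with the self-shrinker $\Sigma$ outside a compact set and $\Sigma$ has polynomial volume growth, short-time existence applies and, by avoidance against large enclosing round spheres together with the self-shrinking behaviour of $\Sigma$ outside the compact region, the resulting flow $\{N_t\}$ develops a singularity in finite time. By Huisken's monotonicity the entropy is non-increasing, so every tangent flow is a self-shrinker $\Sigma_\infty$ with
\[
\lambda(\Sigma_\infty)\le \lambda(N) < \lambda(\Sigma).
\]
Iterating this strict descent on non-flat shrinkers cannot continue indefinitely, so I organize the termination as a minimization: among all non-flat self-shrinkers of dimension $n$ with $\lambda<\lambda(\SS^n)$, pick a sequence minimizing $\lambda$. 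Using the smooth compactness theorem for self-shrinkers from \cite{CM4} together with the $(n-2)$-dimensional singular-set control of Theorem \ref{t:stronger}, extract a subsequential limit $\Sigma_\ast$ that is a (possibly singular) self-shrinker with $\lambda(\Sigma_\ast) \le \inf \lambda$. If $\Sigma_\ast$ is non-flat it falls under the hypotheses of Theorem \ref{t:stronger} and so its entropy can be strictly lowered by a compactly supported perturbation, contradicting minimality; if $\Sigma_\ast$ is flat then $\inf\lambda=1$, which is excluded by the standard entropy gap between the flat plane and non-flat shrinkers.

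I expect two main obstacles. First, setting up the MCF from the non-compact perturbation $N$ rigorously, in particular ensuring that the flow persists long enough for a nontrivial singularity to form while keeping the entropy bounded by $\lambda(N)$; the asymptotic self-similar structure of $\Sigma$ outside the support of the perturbation should give the needed barriers but has to be implemented carefully. Second, the compactness step for potentially non-compact singular shrinkers: ends may escape to infinity along the minimizing sequence, and one must show that in dimension $n\le 6$ the singular set stays $(n-2)$-dimensional in the limit so that Theorem \ref{t:stronger} still applies; this is where the dimension restriction is essential. Once $\lambda(\Sigma)\ge\lambda(\SS^n)$ is established, the diffeomorphism assertion in the range $\lambda(\Sigma)\le\min\{\lambda(\SS^{n-1}),3/2\}$ follows as in Theorem \ref{t:minentropy}: the entropy bound rules out any generalized-cylinder factor in any tangent flow, so the MCF from $\Sigma$ only has spherical singularities, forcing $\Sigma$ to be closed and diffeomorphic to $\SS^n$.
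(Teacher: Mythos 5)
The paper does not prove this statement; Conjecture~\ref{c:c2} is listed as an open problem, with the only case settled being $n=1$, where it follows from Abresch and Langer's classification of self-shrinking curves. So there is no ``paper's own proof'' to compare against, and any complete argument you write would have to be new. With that understood, the attempt as written has genuine gaps that keep it from being a proof.

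The central difficulty is the termination step. Theorem~\ref{c:nonlin1a} and Theorem~\ref{t:stronger} give only a \emph{qualitative} strict decrease of entropy under a compactly supported perturbation; they give no lower bound telling you how far below $\lambda(\Sigma)$ you can go, nor that the decrease is bounded away from zero along a sequence. Consequently the class ``non-flat shrinkers with $\lambda<\lambda(\SS^n)$'' is not closed under the limiting procedure you invoke: a minimizing sequence can converge (locally smoothly, with uniform Gaussian area bounds) to a multiplicity-one plane, and lower semicontinuity of $\lambda$ then gives $\lambda(\Sigma_*)\le\inf\lambda$ without any contradiction --- the infimum is simply not attained inside the class. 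The entropy gap above the plane bounds the infimum away from $1$ but does not prevent the limit from being flat, so ``$\Sigma_*$ flat'' is a perfectly consistent outcome of your compactness argument, not an absurdity. A second gap: after perturbing the non-compact shrinker $\Sigma$ to $N$ with compactly supported change, you assert that the MCF from $N$ develops a singularity whose tangent flow is again a non-flat $n$-dimensional shrinker with entropy $\le\lambda(N)$; but Brakke regularity says smooth space-time points have \emph{flat} tangent flows, and you have not argued that a genuine (non-flat) singularity must form, nor that it happens before information about $N$ is lost to the evolving ends. Finally, the concluding diffeomorphism assertion is circular in the non-compact setting: Theorem~\ref{t:minentropy}'s diffeomorphism statement is for \emph{closed} $\Sigma$, and you cannot ``force $\Sigma$ to be closed'' from an entropy bound via tangent flows of the MCF starting at $\Sigma$ itself without already knowing a compactness or structure theorem for low-entropy non-compact shrinkers beyond what is available. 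Your reduction against cylinders and line-splittings via Stone's monotonicity is the one step that is clean and is in the spirit of the paper's heuristic discussion around Theorem~\ref{t:minentropy}, but it does not by itself bridge the gaps above.
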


Both conjectures are true for curves, i.e., when $n=1$.  The first conjecture
follows for curves by combining Grayson's theorem, \cite{G} (cf. \cite{GH}), and the monotonicity of $\lambda$ under curve shortening flow.  The second conjecture follows for curves from the classification of self-shrinkers by Abresch and Langer.

Conjecture \ref{c:c2} would allow us to carry out the outline above to show that any closed hypersurface has entropy at least that of the sphere, proving 
Conjecture \ref{c:minentropy}.

Furthermore, one could  ask which self-shrinker has the third least entropy, etc.  
It is easy to see that the entropy of the ``Simons cone'' over $\SS^k \times \SS^k$ in $\RR^{2k+2}$ is asymptotic to $\sqrt{2}$  as $k\to \infty$, which is also the limit of
$\lambda (\SS^{2k+1})$.  Thus, as the dimension increases, the Simons cones have lower entropy than some of the generalized cylinders.
For example,  the cone over $\SS^2 \times \SS^2$ has  entropy $\frac{3}{2}  < \lambda (\SS^1 \times \RR^4)$.  In other words, already for $n=5$, $\SS^k \times
\RR^{n-k}$ is not a complete list of the lowest entropy self-shrinkers.

  \section{Two conjectures about singularities of MCF}
  
  Thus far, we have mostly discussed smooth tangent flows (with the exception of Theorem \ref{t:stronger}).  However, tangent flows are not always smooth, but we have the
 following well known conjecture
(see page $8$ of \cite{I1}):

\begin{Con}  \label{c:codimconj}
Suppose that $M_0 \subset \RR^{n+1}$ is a smooth closed embedded hypersurface.  A time slice of any tangent flow of the MCF starting at $M_0$ has a singular set   of dimension at most $n-3$.
\end{Con}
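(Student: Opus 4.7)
The plan is to prove this by a Federer-style dimension-reduction argument, taking iterated tangent flows to reduce to a classification problem for minimal cones in very low ambient dimension. First, note that Huisken's monotonicity forces any tangent flow $\Sigma \times \{t\}$ to be a self-shrinker (the shrinker $\Sigma$ appearing as the time $t=-1$ slice), so the singular set in question is the singular set of the $n$-dimensional hypersurface $\Sigma \subset \RR^{n+1}$. I would work on this fixed self-shrinker.

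Next I would introduce a stratification: let $\Sing^{(k)}(\Sigma)$ denote the points $p \in \Sing(\Sigma)$ such that no tangent cone to $\Sigma$ at $p$ splits off a Euclidean factor of dimension greater than $k$. A key preliminary observation is that second tangent flows (parabolic blow-ups of $\Sigma$ at a fixed base point) are actually \emph{static} minimal cones, not shrinkers: under the rescaling $\lambda_i \to \infty$ the shrinking term $\langle x,\nn\rangle/2$ in the self-shrinker equation becomes negligible at the small scales being magnified, so the limit satisfies $H \equiv 0$ and is dilation invariant. Federer's standard argument, adapted to this parabolic setting as in White's stratification for Brakke flows, then yields $\dim_{\HH} \Sing^{(k)}(\Sigma) \leq k$.

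It therefore suffices to rule out tangent cones at singular points that split off $\RR^{n-2}$ or more. Suppose $\dim \Sing(\Sigma) = d \geq n-2$; at a generic point of the top stratum the tangent cone has the form $C \times \RR^{d}$ where $C \subset \RR^{n-d+1}$ is an $(n-d)$-dimensional minimal cone whose only singularity is at the origin. For $d=n-1$ we would get a singular $1$-cone in $\RR^2$, which is impossible as every $1$-dimensional minimal cone in $\RR^2$ is a line; for $d=n-2$ we would get a $2$-dimensional minimal cone in $\RR^3$ with singularity only at $0$. The classical results of De Giorgi and Fleming, together with Allard's regularity theorem applied to the unit-density varifold limit, force such a cone to be a plane, again contradicting the assumed singularity. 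Ruling out $d \geq n-2$ in this way gives $\dim \Sing(\Sigma) \leq n-3$.

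The main obstacle is the multiplicity-one question: the whole argument requires that the iterated tangent flows arise as unit-density integral varifolds, since Allard/Brakke regularity fails in the presence of higher multiplicity planes. This is precisely Ilmanen's multiplicity-one conjecture for MCF out of a smooth embedded hypersurface, which remains open in general. A realistic intermediate attack is to combine the generic singularity theory of Theorem \ref{c:nonlin1a} and Theorem \ref{t:stronger} with the dynamical picture underlying Theorem \ref{t:minentropy}: after a generic perturbation of $M_0$, the only tangent flows one expects to encounter are $\SS^k \times \RR^{n-k}$, which are smooth, so Conjecture \ref{c:codimconj} reduces to controlling what happens at non-generic initial data or at higher-order singularities — and this is where the canonical neighborhood theorem mentioned at the end of the introduction, together with White's regularity theorem in the mean-convex setting, should supply the final multiplicity-one input needed to complete the dimension reduction.
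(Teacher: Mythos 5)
The statement you are attempting to prove is presented in the paper as an open \emph{conjecture} (attributed to page $8$ of Ilmanen's preprint \cite{I1}); the paper offers no proof of it, and in fact immediately afterward records only the partial progress known --- that Ilmanen established smoothness of tangent flows at the first singular time for $n=2$, while explicitly leaving multiplicity open. So there is no ``paper's own proof'' to compare against, and your own final paragraph concedes that what you have is a program rather than a proof.

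That said, your sketch hits the right framework and, importantly, correctly identifies where it fails to close. The reduction of (parabolic) blow-ups of a shrinker at an off-origin base point to static minimal cones is a genuine fact (it follows from Huisken monotonicity at the new base point combined with the dilation invariance of the shrinker flow at the original spacetime origin, and underlies White's stratification for Brakke flows), and the stratification estimate $\dim_{\HH}\Sing^{(k)} \leq k$ is likewise available. What is not available is the multiplicity-one input: without it, a planar second tangent cone does not force regularity via Allard/Brakke, and that is precisely Ilmanen's multiplicity-one conjecture recalled by the paper. Two further cautions inside the argument itself. First, even granting unit density, passing from the top strata to a \emph{smooth} low-dimensional minimal cone is not automatic: stationary integral varifolds in $\RR^2$ or $\RR^3$ with an isolated singularity include junction-type cones, and ruling these out requires pushing the separating/embedded structure of the original flow through the limit (as in Ilmanen's and White's arguments) rather than appealing only to multiplicity. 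Second, the ``De Giorgi--Fleming'' citation is misplaced for the step you use it for; the classification of smooth $2$-dimensional minimal cones in $\RR^3$ with an isolated singularity reduces to the elementary fact that a closed geodesic in $\SS^2$ is a great circle, whereas De Giorgi--Fleming concerns the absence of singular area-minimizing hypercones in low ambient dimension, a different (and here inapplicable) statement.
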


 ÊObserve, in particular, that Theorem \ref{t:stronger}    classifies entropy stable self-shrinkers in $\RR^{n+1}$ assuming that they have  the smoothness of Conjecture \ref{c:codimconj}.  
 
 In \cite{I1}, Ilmanen proved that in $\RR^3$ tangent flows at the first singular time must be smooth,
although he left open the possibility of multiplicity. However, he conjectured that the multiplicity
must be one.  
 
 \subsection{Negative gradient flow near a critical point}

We are interested in the dynamical properties of mean curvature flow near a singularity.  Specifically we would like to show that the typical flow line or rather the mean curvature flow starting at the typical or generic hypersurface avoids unstable singularities.  Before getting to this, it is useful to recall the simple case of gradient flows near a critical point on a finite dimensional manifold.  Suppose therefore that $f: \RR^2 \to \RR$ is a smooth function with a non-degenerate critical point at $0$ (so $\nabla f (0) = 0$, but the Hessian of $f$ at $0$ has rank $2$).  The behavior of the negative gradient flow
$$
	(x' , y') = - \nabla f (x,y) 
$$
is determined by the Hessian of $f$ at $0$.  For instance, if $f(x,y)=\frac{a}{2}\,x^2+\frac{b}{2}\,y^2$ for constants $a$ and $b$, then the negative gradient flow solves the ODE's $x'=-a\,x$ and $y'=-b\,y$.  Hence, the flow lines are given by $x=\e^{-at}\,x(0)$ and $y=\e^{-bt}\,y(0)$.

The behavior  of the flow near a critical point depends on the index of the critical point, as is illustrated by the following examples:
\begin{enumerate}
\item[(Index 0):]
The function $f(x,y) = x^2 + y^2$ has a minimum at $0$.  The vector field is
$(-2x, -2y)$ and the flow lines are rays into the origin.  Thus every flow line limits to $0$.
\item[(Index 1):] The function $f(x,y) = x^2 - y^2$ has an index one critical point at $0$.  The vector field is
  $(-2x, 2y)$ and the flow lines are 
level sets of the function $h(x,y) = xy$.  
Only points where $y=0$ are on flow lines that limit to the origin. 
\item[(Index 2):]
 The function $f(x,y) = - x^2 - y^2$ has a maximum at $0$.  The vector field is
$(2x, 2y)$ and the flow lines are rays out of the origin.  Thus every flow line limits to $\infty$ and it is impossible to reach $0$.
\end{enumerate}

\vskip8mm
\includegraphics[totalheight=.35\textheight, width=.9\textwidth]{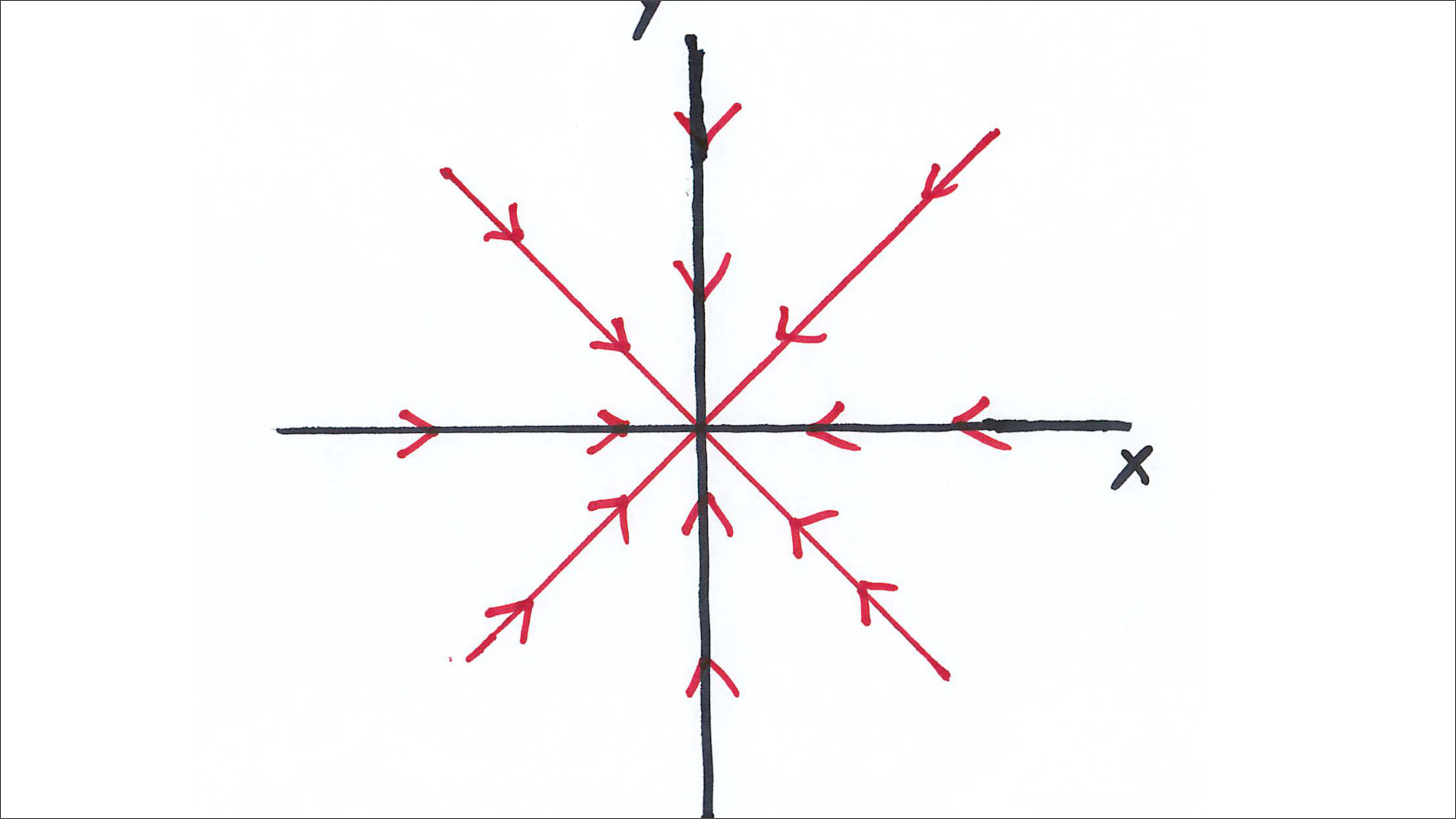}
\vskip1mm
\noindent
$f(x,y) = x^2 + y^2$ has a minimum at $0$.
Flow lines: Rays through the origin.

\vskip8mm
\includegraphics[totalheight=.35\textheight, width=.9\textwidth]{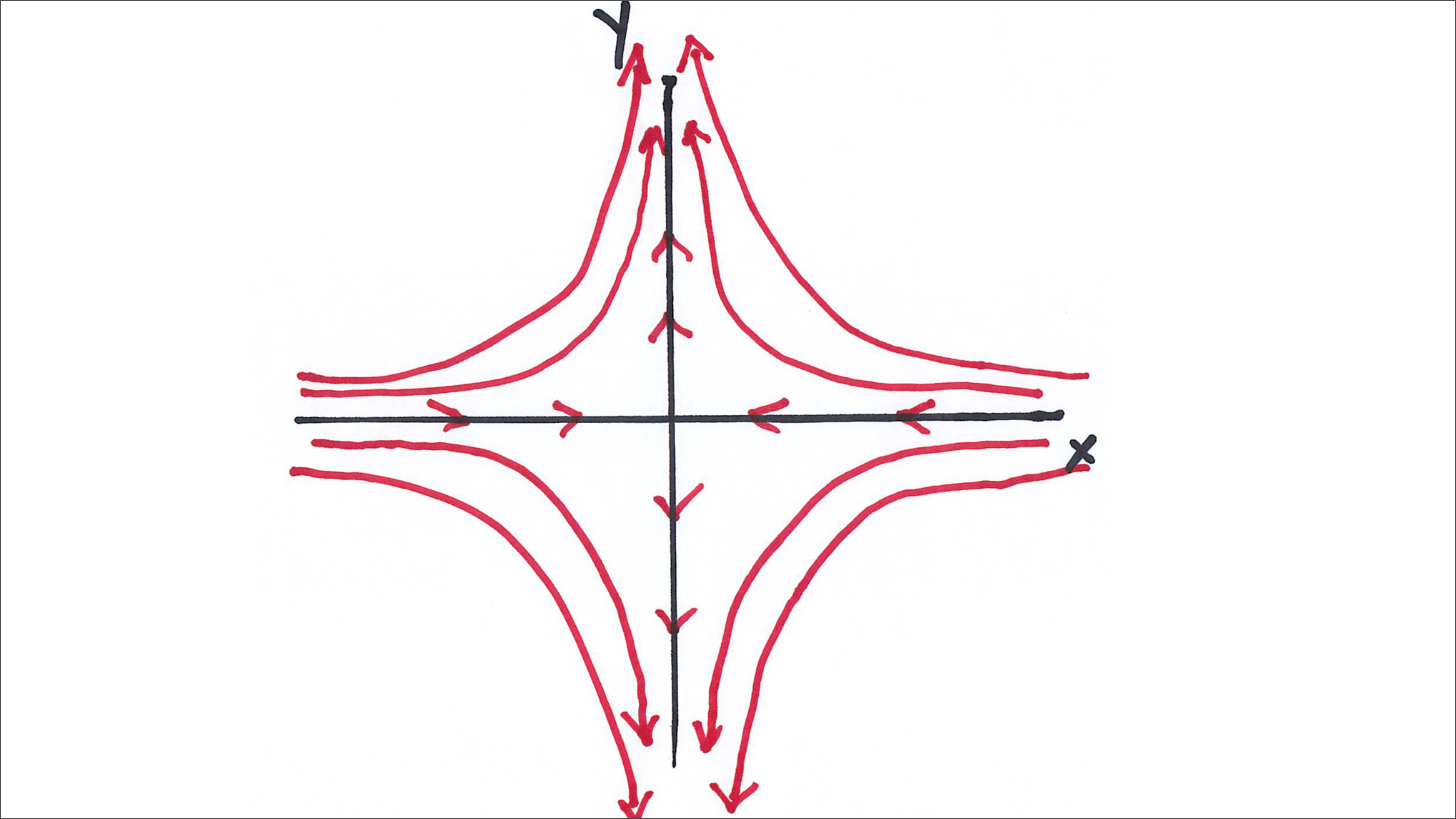}
\vskip1mm
\noindent
$f(x,y) = x^2 - y^2$ has an index one critical point at $0$.
Flow lines:  Level sets of   $ xy$.

\noindent
Only points where $y=0$ limit to the origin. 

\vskip2mm
Thus, we see that the critical point $0$ is ``generic'', or dynamically stable, if and only if it has index $0$.  When the index is positive, the critical point is not generic and a ``random'' flow line will miss the critical point.

The stable manifold for a flow near the fixed point is the set of points $x$ so that the flow starting from $x$ is defined for all time, remains near the fixed point, and converges to the fixed point as $t\to \infty$.  For instance, in the three examples above, the stable manifold is all of $\RR^2$, the $x$-axis, and the origin, respectively.  

\vskip4mm
It is also useful to recall what it means for directions to be expanding or contracting under a flow.  We will explain this in the example of the negative gradient flow of the function $f(x,y)=\frac{a}{2}\,x^2+\frac{b}{2}\,y^2$.  We saw already that the time $t$ flow is $\Psi_t (x,y)=(\e^{-at}\,x,\e^{-bt}\,y)$.  In particular, the time $1$ flow $\Psi=\Psi_1$ is the diagonal matrix with entries $\e^{-a}$ and $\e^{-b}$.  It follows, that if $a$ is negative, then the $x$ direction is expanding for the flow and if $a$ is positive, then the $x$ direction is contracting.  Likewise for $b$ and the $y$ direction.  

\vskip2mm
Consider next a slightly more general situation, where $f$ and $g$ are Morse functions.  We will assume that the gradients of $f$ and $g$   point towards the same direction meaning that 
\begin{equation}
\langle \nabla f,\nabla g\rangle \geq 0\, .\notag
\end{equation}
We will flow in direction of $-\nabla f$ and would like to claim that the typical flow line avoids  the unstable critical points of $g$.    

Later the volume function $\Vol$ will play the role of $f$ and the entropy $\lambda$ will play the role of $g$.   The assumption that $\nabla\, \Vol$ and $\nabla\, \lambda$ point toward the same direction is a consequence of Huisken's monotonicity formula, see the second key property of the entropy above.

To see this claim, consider an unstable critical point for $g$.  Unless it is also a critical point for $f$, then there is nothing to show as away from critical points of $f$ though each point there is only one flow line.  We can therefore assume that the unstable critical point of $g$ is also a critical point for $f$.  It now follows easily from the assumption that $f$ is monotone non-increasing along the negative gradient flow of $g$  that the given point is also an unstable critical point for $f$ and, hence, the claim follows.

\subsection{Flows beginning at a typical hypersurface avoid unstable singularities}

We saw above that, at least in finite dimensions,  a typical flow line of a negative gradient flow  of a function $f$ avoids unstable critical points of a function $g$ when their gradients toward the same direction.  Applying this to $f=\Vol$ and $g=\lambda$ together with the classification of entropy stable singularities (Theorems \ref{c:nonlin1a}
and \ref{t:stronger}) leads naturally to the following conjecture; we will in the next section discuss some of the very recent work from \cite{CM2} related to this conjecture:\footnote{Note that the only smooth hypersurface that is both a critical point for $\lambda$, i.e. is a self-shrinker, and is a critical point for $\Vol$, i.e. is a minimal surface, is a hyperplane.  Namely, any self-shrinker with $H=0$ is a cone as $x^{\perp}=0$, and hence, if smooth, is a hyperplane.  We shall not elaborate further on this, but it illustrates that the claim about typical flow lines is more subtle than the above heuristic argument indicates.  Namely, else one would have that the mean curvature flow starting at the typical or generic hypersurface does not become singular as points in space-time with tangent flow that is a hyperplane is a smooth point in space-time by a result of Brakke.  This is however clearly non-sensible as the flow becomes extinct in finite time and thus must develop a singularity.  In addition to this, then it follows, for instance, from Huisken's result, that any closed convex hypersurface becomes extinct in a round point, that shrinking spheres indeed are generic singularities.}

 \begin{Con}
 Suppose that $M^n \subset \RR^{n+1}$ is an embedded smooth closed hypersurface where $n\leq 6$, then there is a graph $N$ over $M$ of a  function with arbitrarily small $C^m$ norm (for any fixed $m$) so that for the MCF starting at $N$ all tangent flows at singularities are either shrinking round spheres $\SS^n$ or shrinking generalized cylinders $\SS^k \times \RR^{n-k}$.  
  \end{Con}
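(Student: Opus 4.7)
The plan is to combine the entropy-stable classification (Theorems \ref{c:nonlin1a} and \ref{t:stronger}) with the monotonicity of $\lambda$ along the flow and a perturbation/descent scheme, making rigorous the finite-dimensional gradient-flow heuristic from the preceding subsection. The guiding principle is that because $\nabla\Vol$ and $\nabla\lambda$ point into a common half-space (which is the content of Huisken's monotonicity, Theorem \ref{t:huiskenmon}), the $-\nabla\Vol$-flow from a generic initial hypersurface should avoid the unstable critical points of $\lambda$; by \ref{c:nonlin1a} and \ref{t:stronger}, the only stable critical points are the shrinking spheres and generalized cylinders.

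First I would fix $M$ and consider the MCF starting at $M$, together with the (space-time) singular set $\mathcal{S}$ and, at each singular point $(x_0,t_0)$, a tangent flow $\Sigma$ obtained by parabolic blow-up. The point of Conjecture \ref{c:codimconj} is that such $\Sigma$ has singular set of Hausdorff dimension at most $n-3$, so Theorem \ref{t:stronger} applies directly: either $\Sigma$ is a shrinking $\SS^k\times\RR^{n-k}$, or it is unstable in the sense that an arbitrarily $C^m$-small graphical perturbation of $\Sigma$ (compactly supported if $\Sigma$ is neither $\SS^n$ nor splits off a line) has strictly smaller entropy. If every tangent flow at every singular point is already cylindrical, set $N=M$ and we are done. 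Otherwise, let
\begin{equation}
\Lambda(M)=\sup\bigl\{\,\lambda(\Sigma)\,:\,\Sigma\text{ is a tangent flow of MCF}(M)\text{ at some }(x_0,t_0), \ \Sigma\text{ not cylindrical}\,\bigr\}\,,\notag
\end{equation}
which is finite since $\lambda(\Sigma)=F(\Sigma)\le\lambda(M)$ by property (2) of entropy.

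Second, I would run a descent on $\Lambda$. Pick an unstable tangent flow $\Sigma$ realizing (nearly) the supremum, and pick a compactly supported graphical perturbation $\Sigma'$ of $\Sigma$ with $\lambda(\Sigma')<\lambda(\Sigma)-2\eta$ for some $\eta>0$. The key step is to convert this perturbation of the singularity model into a $C^m$-small perturbation of $M$ itself. For this one first invokes a (hoped-for) parabolic canonical-neighborhood statement: on some parabolic ball shrinking into $(x_0,t_0)$, the rescaled flow $\lambda_i M_{t_0+\lambda_i^{-2}s}$ is $C^m$-close to the tangent model. Pull back the compactly supported perturbation $\Sigma\rightsquigarrow\Sigma'$ onto a time slice $M_{t_0-\varepsilon}$ to obtain a local perturbation $\widetilde M_{t_0-\varepsilon}$; then, since the MCF is smooth on $[0,t_0-\varepsilon]$, solve the flow \emph{backwards} from $\widetilde M_{t_0-\varepsilon}$ to time $0$ by using short-time forward stability of MCF on intervals where it is smooth (concretely, approximate $\widetilde M_{t_0-\varepsilon}$ by the time $(t_0-\varepsilon)$ slice of a perturbed initial datum $N_1$, chosen $C^m$-close to $M$ by the forward well-posedness of MCF). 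By monotonicity of entropy, every tangent flow of MCF$(N_1)$ has entropy at most $\lambda(N_1)\le\lambda(M)+o(1)$, and the construction forces any new tangent flow that appears near the modified singular point to have entropy at most $\lambda(\Sigma)-\eta$; hence $\Lambda(N_1)<\Lambda(M)-\eta$.

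Third, I would iterate. Each step strictly reduces $\Lambda$ by at least $\eta_k>0$, with the perturbation at step $k$ taken of size less than $2^{-k}$ in $C^m$; the entropies of non-cylindrical self-shrinkers are bounded below by $\lambda(\RR^n)=1$ and (one would hope) form a discrete set above any fixed threshold when restricted to a compact class — this follows in principle from the smooth compactness theorem for self-shrinkers of polynomial volume growth. The procedure therefore terminates after finitely many steps, or converges to a limit $N$ for which $\Lambda(N)$ is not attained by any non-cylindrical tangent flow, giving the desired $N$.

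The hard part will be step two: upgrading the perturbation of the tangent-flow model to a perturbation of the initial hypersurface. This requires a genuine canonical-neighborhood theorem in a space-time parabolic neighborhood of a generic singularity (of the sort mentioned at the end of the Introduction as work in progress) to justify the local pull-back of $\Sigma'$, together with quantitative control — via the entropy gap and monotonicity — over the \emph{new} singularities that the perturbation may create elsewhere. A secondary obstacle is that the non-cylindrical tangent flow $\Sigma$ in Theorem \ref{c:nonlin1a}(1) may split off a line, in which case the entropy-reducing perturbation is not compactly supported; localizing it (at the cost of a controlled entropy increase) and still obtaining a net strict drop in $\Lambda$ is exactly the issue that makes Conjecture \ref{c:c2} relevant, since the conjectural entropy gap there would supply the quantitative room needed. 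Once these two inputs are in place, the descent outlined above should close.
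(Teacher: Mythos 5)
The statement you are proving is posed in the paper as a Conjecture, not a theorem: the paper offers no proof, only a finite-dimensional heuristic (the gradient-flow discussion with $\langle\nabla\Vol,\nabla\lambda\rangle\ge 0$) together with the entropy-stability classification (Theorems \ref{c:nonlin1a} and \ref{t:stronger}) and pointers to the works in progress \cite{CM2} and \cite{CIM}. So there is nothing in the paper to compare your argument against as a proof; what one can do is compare your outline against the route the paper itself sketches.

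On that score, your proposal is in the spirit of the paper: you correctly identify the entropy-stability classification and Huisken monotonicity as the two key inputs, you correctly identify that Conjecture \ref{c:codimconj} is needed before Theorem \ref{t:stronger} applies, and you correctly flag as the hard missing ingredients (a) a canonical-neighborhood/space-time structure theorem near a singularity (precisely what \cite{CIM} pursues, stated as Conjectures \ref{c:uniq1}--\ref{c:uniq2} in the paper) and (b) quantitative entropy gaps and localization of entropy-reducing perturbations when the unstable shrinker splits off a line, which you correctly tie to Conjecture \ref{c:c2}.

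There are, however, two genuine problems in your proposed step two beyond what you flag. First, you cannot perturb the time-$(t_0-\varepsilon)$ slice and then ``solve backward'' to time $0$; MCF is parabolic and backward-ill-posed, and the image of $C^m$-nearby initial data under the forward flow at time $t_0-\varepsilon$ is an extremely thin (analytic-in-space) class that will not approximate a generic compactly supported graphical perturbation $\widetilde M_{t_0-\varepsilon}$. The program actually envisioned in \cite{CM2} goes the other way: perturb at time $0$ and control how that perturbation propagates forward through the linearized dynamics near the shrinker, which is precisely the content of the stable/unstable splitting $E=E_1\oplus E_2$ in Theorem \ref{t:one}. Second, the paper's own footnote warns that your guiding heuristic (``the $-\nabla\Vol$-flow avoids unstable critical points of $\lambda$'') is more subtle than it looks: the only hypersurface that is simultaneously a critical point of $\Vol$ (minimal) and of $\lambda$ (a shrinker) is a hyperplane, so taken literally the heuristic would wrongly predict that generic flows never become singular. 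Your descent is built on that heuristic without addressing this subtlety, and any serious attempt would need to disentangle the renormalized dynamics (where the relevant fixed points live) from the unrenormalized gradient flow of $\Vol$. In short, your outline correctly reproduces the paper's intended strategy and honestly names most of the open ingredients, but it is not a proof, cannot be one with present technology, and the ``perturb late, flow backward'' reduction as written would fail even granting the other conjectural inputs.
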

  
  \section{Dynamics of closed singularities}
  
  In this section we will discuss some very recent results related to the previous conjecture.  This work shows that, for generic initial data, the mean curvature flow never ends up in an unstable closed singularity.  The first step of this is to show that, near a closed unstable self-shrinker, the dynamics of the negative gradient flow of the $F$-functional looks exactly like an infinite dimensional version of the dynamics of the negative gradient flow of the function $f(x,y)=x^2-y^2$ near the unstable critical point $(x,y)=(0,0)$.
  
  We have already seen that the mean curvature flow is the negative gradient flow of volume.   
We have seen that singularities are modeled by their blow-ups, which are self-similar shrinkers and we have explained that the only smooth stable self-shrinkers are spheres, planes, and generalized cylinders (i.e., $\SS^k \times \RR^{n-k}$).    In particular, the round sphere is the only closed stable singularity for the mean curvature flow. 

Suppose that $M_t$ is a one-parameter family of closed hypersurfaces flowing by MCF. We want to analyze the flow near a singularity in space-time.  After translating, we may assume that the singularity occurs at the origin in space-time.  If we reparametrize and rescale the flow as follows $t\to M_{-\e^{-t}}/\sqrt{\e^{-t}}$, then we get a solution to the rescaled MCF equation.    The rescaled MCF is the negative gradient flow for the $F$-functional
\begin{align}
	F(\Sigma) = 	\left( 4 \, \pi \right)^{ - \frac{n}{2} } \, \int_{\Sigma} \e^{ - \frac{|x|^2}{4} } \, ,
\end{align}
where the gradient is with respect to the weighted inner product on the space of normal variations.    The fixed points of the rescaled MCF, or equivalently the critical points of the $F$-functional, are the self-shrinkers.   The rescaling   to get to the rescaled MCF   turns the question of the dynamics of the MCF near a singularity into a question of the dynamics near a fixed point for the rescaled flow.  
We can therefore treat the rescaled MCF as a special kind of dynamical system that is the gradient flow of a globally defined function and where the fixed points are the singularity models for the original flow.   
  
  The paper \cite{CM2} analyzes the behavior of the rescaled flow in a neighborhood of a closed unstable  self-shrinker.  Using this analysis, it is shown that generically one never ends up in an unstable closed singularity.     
  A key step  is to show that, in a suitable sense, ``nearly every'' hypersurface in a neighborhood of the unstable self-shrinkers is wandering or, equivalently, non-recurrent.  In contrast, in a small neighborhood of the round sphere, all closed hypersurfaces are convex and thus all become extinct in a round sphere under the MCF by a result of Huisken, \cite{H1}.  The point in space-time where a closed hypersurface nearby the round sphere becomes extinct may be different from that of the given round sphere.  This corresponds to that, under the rescaled MCF, it may leave a neighborhood of the round sphere but does so near a translation of the sphere.   Similarly, in a neighborhood of an unstable self-shrinker, there are closed hypersurfaces that under the rescaled MCF leave the neighborhood of the self-shrinker but do so in a trivial way, namely, near a translate of the given unstable self-shrinker.  This, of course, leads to no real change.  However,  it was shown in \cite{CM2}  that a typical closed hypersurface near an unstable self-shrinker not only leaves a neighborhood of the self-shrinker, but, when it does, is not close to a rigid motion or dilation of the given self-shrinker.   Thus, we have a genuine improvement, or at least a change, of the singularity.    Using that the rescaled MCF is a gradient flow, it was also shown in \cite{CM2} that once the flow leaves a neighborhood of a self-shrinker it will never return but wander off.  Together,  this not only gives a change, but an actual improvement.  
 
   \subsection{Dynamics near a closed self-shrinker}
   
   In this subsection, we will explain in what sense the dynamics of the negative gradient flow of the $F$-functional near a closed unstable self-shrinker looks like an infinite dimensional version of the dynamics of the negative gradient flow of the function $f(x,y)=x^2-y^2$ near the index $1$ critical point $(x,y)=(0,0)$.

Let $E$ be the Banach space of $C^{2,\alpha}$ functions on a smooth closed embedded hypersurface $\Sigma \subset \RR^{n+1}$ with unit normal $\nn$.  We are identifying $E$ with the space of $C^{2,\alpha}$ hypersurfaces near $\Sigma$ by mapping a function $u$ to its graph 
\begin{equation}
	\Sigma_u = \{ p + u(p) \, \nn (p) \, | \, p \in \Sigma \} \, .
\end{equation}
If $E_1 , E_2$ are subspaces of $E$ with $E_1 \cap E_2 = \{ 0 \}$ and that together span $E$, i.e., so that 
\begin{equation}
	E = \{ x_1 + x_2 \, | \, x_1 \in E_1 , x_2 \in E_2 \} \, ,
\end{equation}
then we will say that $E= E_1 \oplus E_2$ is a splitting of $E$.

\vskip2mm
The essence of the next theorem is that ``nearly every'' hypersurface in a neighborhood of the given unstable singularity leaves the neighborhood under the recaled MCF and, when it does, is not near a translate, rotation or dilation of the given singularity.  

\begin{Thm} 	\label{t:one}
(Colding-Minicozzi, \cite{CM2}).   
Suppose that $\Sigma^n \subset \RR^{n+1}$ is a smooth closed embedded self-shrinker, but is not a sphere.  There exists an open  neighborhood $\cO = \cO_{\Sigma}$ of $\Sigma$ and  a  subset $ W$ of $\cO$ so that:
\begin{itemize}
\item There is a splitting $E= E_1 \oplus E_2$  with  $\dim (E_1) > 0$ 
so that  $W$ is contained in the graph $(x,u(x))$ of a continuous mapping     $u: E_2 \to E_1$.
\item  If $\Gamma \in \cO \setminus W$, then the rescaled mean curvature flow starting at $\Gamma$ leaves $\cO$ and the orbit of $\cO$ under the group of conformal linear transformations of $\RR^{n+1}.  $\footnote{Recall that the group of conformal linear transformations of $\RR^{n+1}$ is generated by the rigid motions and the dilations.}
\end{itemize}
\end{Thm}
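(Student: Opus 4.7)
\medskip
\noindent\textbf{Proof proposal for Theorem \ref{t:one}.}
The plan is to treat the rescaled MCF as a semilinear parabolic evolution equation on a Banach space of normal graphs over $\Sigma$ and to construct $W$ as the local center-stable manifold of $\Sigma$, after first quotienting out the neutral directions generated by the conformal linear group. Write nearby hypersurfaces as $\Sigma_u = \{ p + u(p)\,\nn(p) : p \in \Sigma\}$, so that $\cO$ becomes an open neighborhood of the origin in the Banach space $E = C^{2,\alpha}(\Sigma)$. Pulled back to $E$, the rescaled MCF takes the semilinear form
\begin{equation}
\partial_t u = \LL\, u + Q(u), \notag
\end{equation}
where $\LL$ is the Jacobi operator for the $F$-functional at $\Sigma$ (the second variation operator, a self-adjoint elliptic operator with respect to the Gaussian weight) and $Q(u)$ vanishes to second order at $u=0$. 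The key fact I would invoke is that, since $\Sigma$ is a smooth closed self-shrinker and $\Sigma \ne \SS^n$, the $F$-stability theorem of Colding--Minicozzi gives that $\LL$, restricted to the $L^2$-orthogonal complement of the finite-dimensional subspace $K$ spanned by the infinitesimal translations and dilations of $\Sigma$, has at least one positive eigenvalue (i.e.\ an honest unstable direction of $F$ modulo the conformal linear group).

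\medskip
Next I would build the splitting $E = E_1 \oplus E_2$. Since $\Sigma$ is closed, $\LL$ has discrete spectrum, so decompose $E$ spectrally: let $E_1$ be the finite-dimensional eigenspace of $\LL$ corresponding to strictly positive eigenvalues that are transverse to $K$ (equivalently, the unstable eigenspace of the operator $\LL$ on the quotient by $K$; note $\dim E_1 > 0$ by the previous step), and let $E_2$ be the $\LL$-invariant complement containing both the negative-eigenvalue (stable) part and $K$ itself. Because the semigroup $e^{t\LL}$ is exponentially contracting on $E_2$ modulo $K$ and exponentially expanding on $E_1$, and because $Q$ is quadratic in $u$ with bounds in $C^{2,\alpha}$ obtained from Schauder estimates, the standard infinite-dimensional stable/center-stable manifold construction (Hadamard graph transform or Lyapunov--Perron fixed point in an exponentially weighted space of backward trajectories) produces a local Lipschitz graph $W = \{(x, u(x)) : x \in E_2 \cap \cO\}$, $u: E_2 \to E_1$, characterized as the set of initial data whose forward rescaled flow remains in $\cO$ modulo the orbit of the conformal linear group.

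\medskip
Finally I would verify the escape claim. If $\Gamma \in \cO \setminus W$, write $\Gamma = x_2 + x_1$ with $x_1 - u(x_2) \neq 0$ in $E_1$. Conjugating the flow by the center-stable graph (i.e.\ subtracting $u(x_2)$ to straighten out $W$ to $\{x_1 = 0\}$), the projection of $\Gamma(t)$ onto $E_1$ satisfies a perturbed linear ODE whose linear part is exponentially expanding at rate equal to the smallest positive eigenvalue of $\LL$ on $E_1$; a Gronwall argument, using that $Q$ is quadratic and that as long as $\Gamma(t) \in \cO$ the $E_2$-component stays small, forces $\|x_1(t)\|$ to grow exponentially until $\Gamma(t)$ exits $\cO$. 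Because $E_1$ was chosen transverse to the tangent space $K$ of the conformal linear orbit of $\Sigma$, this exit is genuine: the distance from $\Gamma(t)$ to the entire orbit of $\cO$ under conformal linear transformations is bounded below by the $E_1$-component up to a uniform constant, so $\Gamma(t)$ also leaves this orbit.

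\medskip
\noindent\textbf{Main obstacle.} The technical heart is the center-stable manifold construction in the presence of the finite-dimensional neutral subspace $K$ of exact symmetries. One must choose function spaces (here $C^{2,\alpha}$) in which $\LL$ generates an analytic semigroup with the required spectral gap and in which $Q(u)$ is genuinely quadratic in the norm, and then show that the center-stable manifold can be arranged to be invariant under, and transverse to, the conformal linear action. Equivalently, one can pass to the quotient by a finite-dimensional slice of this action and apply a standard hyperbolic stable manifold theorem there; checking that this slice exists smoothly near $\Sigma$ and that trajectories that leave a slice-neighborhood really leave the orbit-neighborhood is where the main care is needed.
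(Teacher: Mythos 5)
First, a caveat on what ``the paper's own proof'' means here: the paper does not prove Theorem \ref{t:one}.  It is a survey statement cited to \cite{CM2} (listed as ``in preparation''), and the only supporting material in the paper is the heuristic subsection ``The heuristics of the local dynamics.''  Your framework---rewriting the rescaled flow as $\partial_t u = \LL\,u + Q(u)$ over $\Sigma$, with $\LL$ the stability operator, using $F$-instability of $\Sigma \ne \SS^n$ to get $\dim E_1 > 0$, splitting $E$ spectrally, and building $W$ via a Lyapunov--Perron/Hadamard center-stable manifold argument---is the natural one and is consistent with that heuristic discussion.

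There is, however, a genuine conceptual error.  You describe the tangent space $K$ to the conformal linear orbit as a ``finite-dimensional neutral subspace of exact symmetries.''  For the $F$-functional and the rescaled flow this is false: only rotations are symmetries of $F$, while translations and dilations are not; their infinitesimal generators are eigenfunctions of $\LL$ with \emph{positive} eigenvalues $1/2$ and $1$, so they are expanding directions, not neutral ones.  The paper says this explicitly: ``It turns out that all the directions tangent to the group action are expanding directions for the flow.''  This is not cosmetic.  Your construction takes $E_2 = (\text{stable part}) \oplus K$ and then runs a standard hyperbolic stable-manifold construction with $e^{t\LL}$ taken to be contracting on $E_2$; with $K \subset E_2$ expanding, that semigroup is \emph{not} contracting on $E_2$, so the graph-transform/fixed-point step and the Gronwall step in your escape argument (which needs the $E_2$-component to remain controlled while the $E_1$-component grows) both break.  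The fix, which you mention only in passing as a ``main obstacle'' rather than as the structural backbone, is to gauge-fix along the group (choose a slice and absorb the expanding $K$-drift into a time-dependent conformal transformation), so that after slicing, the group directions really are neutral and parametrize only which conformal image of $\Sigma$ the flow is tracking; the stable-manifold theorem is then applied to the sliced dynamics, and the transversality/lower-bound step at the end (distance to the group orbit controlled from below by the $E_1$-projection) is precisely the statement that escape in the slice implies escape from the orbit.  Since \cite{CM2} is not reproduced in the paper I cannot certify which specific gauge-fixing it uses, but the expanding character of $K$ is stated in the survey, and a proposal built on $K$ being neutral misses an essential feature of the problem.
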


The space $E_2$ is, loosely speaking, the span of all the contracting directions for the flow together with all the directions tangent to the action of the conformal linear group.    It turns out that all the directions tangent to the group action are expanding directions for the flow.

Recall that the (local) stable manifold is the set of points $x$ near the fixed point so that the flow starting from $x$ is defined for all time, remains near the fixed point, and converges to the fixed point as $t\to \infty$.
Obviously, Theorem \ref{t:one} implies that the local stable manifold  is contained in $W$.  

There are several earlier results that analyze rescaled MCF  near a closed  self-shrinker, but all of these are for round circles and spheres which are stable under the flow.  The earliest are the global results of Gage-Hamilton, \cite{GH}, and Huisken, \cite{H1}, mentioned in an earlier section of this paper, showing that closed embedded convex hypersurfaces flow to spheres.   
 There is also a stable manifold theorem of Epstein-Weinstein, \cite{EW}, from the late 1980s for the curve shortening flow that also applies to closed immersed self-shrinking curves, but does not incorporate the group action.     In particular, for something to be in Epstein-Weinstein's stable manifold, then under the rescaled flow it has to limit into the given self-shrinking curve.  In other words, for a curve to be in their stable manifold it is not enough that it limit into a rotation, translation or dilation of the self-shrinking curve.  
 
 \subsection{The heuristics of the local dynamics}
 We will very briefly explain the underlying reason for this theorem about the local dynamics near a closed self-shrinker and why it is an infinite dimensional and nonlinear version of the simple finite dimensional examples we discussed earlier.  
 
 Suppose $\Sigma$ is a manifold and $h$ is a function on $\Sigma$.  
Let $w_i$ be an orthonormal basis of the Hilbert space $L^2(\Sigma,\e^h\,d\Vol)$, where the inner product is given by $\langle v,w\rangle=\int_{\Sigma} v\,w\,\e^{h}\,d\Vol$.  For constants $\mu_i\in \RR$ define a function $f$ on the infinite dimensional space $L^2(\Sigma,\e^h\,d\Vol)$ as follows:  If $w\in L^2(\Sigma,\e^h\,d\Vol)$, then
\begin{equation}
f(w)=\sum_i\frac{\mu_i}{2}\,\langle w,w_i\rangle^2\, .
\end{equation}
As in the finite dimensional case, the negative gradient flow of $f$ is:
\begin{equation}
\Psi_t(w)=\e^{-\mu_i t}\langle w_i,w\rangle\, .
\end{equation}

Of particular interest is when $\Sigma^n\subset \RR^{n+1}$ is a self-shrinker, $h(x)=-\frac{|x|^2}{4}$, and the basis $w_i$ are eigenfunctions with eigenvalues $\mu_i$ of a self-adjoint operator $L$ of the form
\begin{equation}
L\, w=\Delta\,w+|A|^2\,w-\frac{1}{2}\langle x,\nabla w\rangle+\frac{1}{2}w\, .
\end{equation}
The reason this is of particular interest is because in \cite{CM1} it was shown that the Hessian of the $F$-functional is given by
\begin{equation}
\Hess_F(v,w)=- \left( 4 \, \pi \right)^{ - \frac{n}{2} } \, \int_{\Sigma} v\,L\,w\,\e^{-\frac{|x|^2}{4}}\, .
\end{equation}

For an $f$ of this form,  the negative gradient flow is equal to the heat flow of the linear heat operator $(\partial_t-L)$.  Moreover, this linear heat flow is the linearization of the rescaled MCF at the self-shrinker.   It follows that the rescaled MCF near the self-shrinker is approximated by the negative gradient flow of $f$.  This same fact is also reflected by fact that if we formally write down the first three terms in the Taylor expansion of $F$, then we get the value of $F$ at $\Sigma$ plus a first order polynomial which is zero since $\Sigma$ is a critical point of $F$ plus a polynomial of degree two which is given by the Hessian of $F$ and is exactly $f$.   This gives a heuristic explanation for the above theorem:  The dynamics of the negative gradient flow of the $F$ functional should be well approximated by the dynamics for its second order Taylor polynomial.   

\section{Rigidity of cylinders and space-time neighborhoods of generic singularities for MCF}

In this section we discuss some joint work in progress between the first two authors and Tom Ilmanen; see \cite{CIM} for more details.

The aim of this work is two-fold.  First, to show that round generalized cylinders are rigid in a very strong sense.  Namely, any other self-shrinker that is sufficiently close to one of them on a large, but compact set, and with a fixed but arbitrary entropy bound must indeed itself be a round generalized cylinder.    

The second aim is to show a canonical neighborhood theorem near any generic singularity for general mean curvature flow.   This would assert that if at a singularity a tangent flow is a generalized cylinder, then in a space-time neighborhood of the singularity the flow has positive mean curvature.   Thus, for  any generic mean curvature flow,   the flow looks like one  of the very special flows, namely those that have positive mean curvature, in a neighborhood of each singularity.  Incidentally, as mentioned earlier, when an entire time-slice has positive mean curvature then so has any later time-slice.  For rotationally symmetric hypersurfaces this property of mean convexity near singularities was shown by ODE techniques by Altschuler, Angenent, and Giga where they called it the ``attracting axis theorem''.   

\vskip2mm
More precisely the first aim of the work \cite{CIM} is to show the following:

\begin{Con}		\label{c:uniq1}
Given $n$, $\alpha>1$, there exists $R=R(n, \alpha) > 0$ so that if $\Sigma^n \subset \RR^{n+1}$ is a smooth embedded self-shrinker with entropy $\leq \alpha$ and 
\begin{itemize}
\item $H \geq 0$ on $B_R \cap \Sigma$, 
\end{itemize}
then $H \geq 0$ on $\Sigma$ and, thus, $\Sigma$ is a generalized cylinder $\SS^k \times \RR^{n-k}$.
\end{Con}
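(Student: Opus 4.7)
The plan is a contradiction argument combining smooth compactness for self-shrinkers of bounded entropy with a rigidity (uniqueness) theorem for cylindrical self-shrinkers. Suppose the conclusion fails. Then for fixed $n$ and $\alpha > 1$ there is a sequence of smooth embedded self-shrinkers $\Sigma_i^n \subset \RR^{n+1}$ and radii $R_i \to \infty$ with $\lambda(\Sigma_i) \leq \alpha$ and $H \geq 0$ on $B_{R_i} \cap \Sigma_i$, yet no $\Sigma_i$ is a round generalized cylinder $\SS^k \times \RR^{n-k}$.

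First, I would extract a limit. The uniform entropy bound gives uniform local area bounds via the monotonicity formula and, combined with Brakke regularity and the Colding--Minicozzi smooth compactness theorem for self-shrinkers of bounded entropy (see \cite{CM1, CM4}), a subsequence of $\Sigma_i$ converges smoothly on compact subsets to a smooth embedded self-shrinker $\Sigma_\infty$ with $\lambda(\Sigma_\infty) \leq \alpha$. Since $H \geq 0$ on $B_{R_i}\cap \Sigma_i$ and $R_i\to\infty$, smooth local convergence forces $H_{\Sigma_\infty} \geq 0$ everywhere. Huisken's classification of mean convex self-shrinkers with polynomial volume growth then implies that $\Sigma_\infty$ is a round generalized cylinder $\SS^k\times \RR^{n-k}$ (the degenerate hyperplane case can be ruled out separately using Brakke regularity and the fact that a nearby plane would force $\Sigma_i$ itself to be a plane).

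It then remains to show that $\Sigma_i$ itself must be a round cylinder for $i$ sufficiently large, yielding the contradiction. This is a \textbf{Lojasiewicz--Simon type rigidity} statement: the round cylinder is a critical point of the $F$-functional, and modulo the action of the group generated by rigid motions and dilations, its non-positive Jacobi fields are exhausted by the infinitesimal action of that group and are therefore integrable. Consequently, within a sufficiently small $C^{2,\alpha}$-neighborhood of $\SS^k \times \RR^{n-k}$ on every fixed compact set, the only smooth embedded self-shrinkers of entropy $\leq \alpha$ are exact round cylinders. Applied to $\Sigma_i$---which, after possibly applying a conformal linear motion, lies in smaller and smaller $C^{2,\alpha}$-neighborhoods of the cylinder on larger and larger compact sets---this forces $\Sigma_i$ to coincide with a cylinder for $i$ large.

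The \emph{main obstacle} is the rigidity step. Because $\SS^k\times\RR^{n-k}$ is non-compact for $k < n$, smooth convergence on compact sets does not by itself control $\Sigma_i$ on its cylindrical ends where $|x|$ is comparable to $R_i$. Producing the required uniform asymptotic rigidity---showing that an entropy-bounded self-shrinker which is close to a cylindrical end on a large annular region must in fact extend as an exact cylindrical end---is where the deeper input of \cite{CIM} enters: spectral analysis of the drift operator $L = \Delta + |A|^2 - \tfrac{1}{2}\langle x, \nabla\cdot\rangle + \tfrac{1}{2}$ on the cylinder, combined with quantitative Lojasiewicz-type estimates for the gradient of $F$ along cylindrical ends, which together close the remaining gap between compact smooth closeness and global rigidity.
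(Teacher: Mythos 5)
The statement you set out to prove is explicitly labeled as a \emph{Conjecture} in the paper, and the paper does not prove it. Section 9 presents Conjecture~\ref{c:uniq1} and Conjecture~\ref{c:uniq2} as the two aims of joint work in progress with Ilmanen, and the text concludes only that ``both of these conjectures are shown to be true in \cite{CIM} with some mild extra assumptions and a proof of the full conjectures seems within reach.'' So there is no proof in the paper to compare against; the most one can do is assess whether your outline is a plausible reduction.

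The compactness-plus-rigidity skeleton you describe is a reasonable way to organize a proof-by-contradiction, and you correctly flag the true difficulty: propagating control from a fixed compact set to the noncompact ends of a generalized cylinder. But in doing so you have deferred the entire substance of the conjecture to a step you do not prove. The assertion ``within a sufficiently small $C^{2,\alpha}$-neighborhood of $\SS^k\times\RR^{n-k}$ on every fixed compact set, the only smooth embedded self-shrinkers of entropy $\leq\alpha$ are exact round cylinders'' is not a soft isolation-of-critical-points fact. Because the cylinder has noncompact ends, one expects there to be many distinct self-shrinkers (obtained, e.g., by perturbations supported near infinity) that agree with a cylinder to arbitrarily high precision on any fixed ball; the content of the rigidity theorem in \cite{CIM} is precisely that a quantitative closeness on a sufficiently large ball self-improves and propagates outward, via a Łojasiewicz--Simon type inequality applied iteratively on growing annuli. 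Stated differently, the rigidity you invoke, once made precise as ``close to a cylinder on $B_{R_0}$ for some fixed $R_0(n,\alpha)$ $\Rightarrow$ cylinder,'' is essentially a restatement of Conjecture~\ref{c:uniq1}; the compactness argument only reduces the conjecture to it, and you have not closed that loop.

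Two further technical remarks. First, the smooth subsequential convergence you invoke from entropy bounds alone is not available in all dimensions: the Colding--Minicozzi compactness theorem \cite{CM4} is for self-shrinking surfaces in $\RR^3$ and uses a genus bound; in higher dimensions a uniform entropy bound gives Gaussian area bounds and a varifold limit but does not by itself yield local curvature estimates, so the passage to a smooth limit requires an additional input (here, presumably, one would try to exploit the mean convexity on large balls to get curvature estimates, but that needs to be carried out). Second, your appeal to ``Huisken's classification of mean convex self-shrinkers'' for the limit is broadly right in spirit, but Huisken's theorem needs bounded second fundamental form (or a suitable replacement such as the Colding--Minicozzi classification in \cite{CM1}, Theorem~\ref{c:nonlin1a}, under an entropy bound), and the hyperplane case has to be ruled out with care in a noncompact setting since Brakke's theorem controls multiplicity-one tangent flows, not an arbitrary smooth self-shrinker close to a plane on a ball. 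None of these are fatal, but together with the deferral of the rigidity step they mean the proposal is an outline of a strategy rather than a proof.
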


The second aim of the work \cite{CIM} is the following closely related canonical neighborhood statement:

\begin{Con}		\label{c:uniq2}
Suppose that $M_t$ is a MCF flow of smooth closed hypersurfaces in $\RR^{n+1}$.  If the flow has a cylindrical singularity at time $t_0$ and at the point $x_0$ in space $\RR^{n+1}$, then in an entire space-time neighborhood of $(x_0,t_0)$ the evolving hypersurfaces has positive mean curvature.
\end{Con}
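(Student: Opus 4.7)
I plan to argue by contradiction, combining parabolic rescaling at $(x_0,t_0)$, the uniqueness (with multiplicity one) of cylindrical tangent flows, and the self-shrinker rigidity of Conjecture \ref{c:uniq1}. Set $\alpha_0 := \lambda(M_0)$; by Huisken's monotonicity and the scale-invariance of entropy, every tangent flow of $M_t$ has entropy at most $\alpha_0$, and I let $R := R(n,\alpha_0+1)$ be the radius furnished by Conjecture \ref{c:uniq1}. Suppose the conclusion of Conjecture \ref{c:uniq2} fails: pick a sequence of space-time points $(y_i,s_i)\to(x_0,t_0)$ with $H(y_i,s_i)\le 0$, set $\sigma_i := \sqrt{t_0-s_i}\to 0$, and form the parabolic rescaling $\widetilde M^i_\tau := \sigma_i^{-1}(M_{t_0+\sigma_i^2\tau}-x_0)$. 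Then $\widetilde y_i := \sigma_i^{-1}(y_i-x_0)\in\widetilde M^i_{-1}$ satisfies $\widetilde H(\widetilde y_i) = \sigma_i\,H(y_i,s_i)\le 0$.

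I split the analysis according to whether $|\widetilde y_i|$ is bounded. In the \emph{parabolic case} $|\widetilde y_i|\le C$, tangent-flow uniqueness combined with Brakke regularity forces $\widetilde M^i_{-1}\to \SS^k\times\RR^{n-k}$ smoothly on compact sets. A subsequence gives $\widetilde y_i\to\widetilde y_\infty$ on the cylinder, which carries constant mean curvature $\sqrt{k/2}>0$; smooth convergence then yields $\widetilde H(\widetilde y_i)\to\sqrt{k/2}>0$, contradicting $\widetilde H(\widetilde y_i)\le 0$.

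The \emph{non-parabolic case} $|\widetilde y_i|\to\infty$ is the main obstacle, since the cylindrical tangent flow at the parabolic scale only controls compact regions. Here I would re-rescale at the Euclidean scale $r_i := |y_i-x_0|\gg\sigma_i$, setting $\overline M^i_\tau := r_i^{-1}(M_{t_0+r_i^2\tau}-x_0)$, so that $\bar y_i := r_i^{-1}(y_i-x_0)$ is a unit vector at rescaled time $\bar\tau_i = -(\sigma_i/r_i)^2\to 0^-$, and $\bar H(\bar y_i)=r_i\,H(y_i,s_i)\le 0$. By Brakke compactness, the entropy bound, and tangent-flow uniqueness with multiplicity one, a subsequence of $\overline M^i_\tau$ converges, for each fixed $\tau<0$, to the cylindrical shrinker $\sqrt{-\tau}(\SS^k\times\RR^{n-k})$ smoothly on compact subsets of $\RR^{n+1}$. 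In particular, on $\overline M^i_{-1}\cap B_{2}$ one obtains $\bar H>c(n,k)>0$ for $i$ large. Conjecture \ref{c:uniq1} enters to promote positivity of $\bar H$ from $B_R$ to the entire rescaled slice, forcing each cluster self-shrinker to be a generalized cylinder with $H>0$ everywhere; this furnishes the outside control needed to run the strong maximum principle for $\partial_t H = \Delta H+|A|^2H$ on a suitable space-time parabolic cylinder running from rescaled time $-1$ up to $\bar\tau_i\to 0^-$. Propagating positivity forward in rescaled time to $(\bar y_i,\bar\tau_i)$ yields $\bar H(\bar y_i)>0$, contradicting the sign condition and closing the argument. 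The hardest step, as expected, is carrying out this maximum-principle propagation across scales up to the singular time $\tau=0$: one must control $\bar H$ uniformly on a region whose ``outer boundary'' recedes as $r_i\to 0$, and this is where the global rigidity supplied by Conjecture \ref{c:uniq1} is essential.
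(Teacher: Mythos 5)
The statement you set out to prove is stated in the paper as a \emph{Conjecture}, not as a theorem, and the paper provides no proof of it. The text immediately after Conjectures \ref{c:uniq1} and \ref{c:uniq2} says only that they ``are shown to be true in \cite{CIM} with some mild extra assumptions and a proof of the full conjectures seems within reach,'' where \cite{CIM} is listed as ``in preparation.'' There is therefore no proof in the paper against which your argument can be compared step by step; any review can only assess the internal plausibility of your sketch.

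\textbf{On the sketch itself.} Your overall framework --- argue by contradiction, parabolically rescale at $(x_0,t_0)$, split according to whether the rescaled bad points $\widetilde{y}_i$ stay in a compact region, use Brakke regularity plus multiplicity-one smoothness of the tangent flow in the compact case, and push positivity of $H$ by a maximum-principle argument otherwise --- is the kind of argument one would expect for a canonical neighborhood theorem, and the compact (``parabolic'') half is essentially correct modulo two large inputs. However, I see two genuine problems. First, you invoke \emph{uniqueness with multiplicity one of cylindrical tangent flows} as a known tool, and you also feed in Conjecture \ref{c:uniq1} as a black box; neither of these is available inside the paper (the first is itself a hard subsequent theorem, the second is explicitly a conjecture here), so your proposal at best reduces one conjecture to another plus unproven external input. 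Second, and more seriously, in the non-parabolic case the rescaling at the intermediate Euclidean scale $r_i=|y_i-x_0|\gg\sigma_i$ does \emph{not} obviously produce, at the relevant rescaled time $\bar\tau_i=-(\sigma_i/r_i)^2\to 0^-$, a hypersurface smoothly close to a cylinder near $\bar y_i$: at that rescaled time the tangent-flow cylinder has collapsed to radius $\sqrt{2k}\,(\sigma_i/r_i)\to 0$, so a unit vector $\bar y_i$ cannot lie on something that is close to the collapsed cylinder unless the surface near $\bar y_i$ has left the cylindrical regime altogether. This is precisely the regime the conjecture is about, and your argument does not establish what the flow looks like there; you yourself flag the ``maximum-principle propagation across scales up to singular time'' as the hardest step, but as written it is a gap rather than an argument. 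So while the structure is reasonable, the proposal does not constitute a proof, and since the paper also has no proof, there is nothing here to compare it against beyond this plausibility assessment.

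Given that the statement is a conjecture in the source, the correct ``review'' outcome is simply that this problem is open (or at least open in full generality at the time of the paper), that your sketch identifies the right difficulties, and that the hard non-parabolic/intermediate-scale step is exactly where a real proof has to do serious work.
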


Both of these conjectures are shown to be true in \cite{CIM} with some mild extra assumptions and a proof of the full conjectures seems within reach.


\end{document}